\numberwithin{equation}{section}
\newtheorem{theorem}[equation]{Theorem}
\newtheorem{lemma}[equation]{Lemma}
\newtheorem{corollary}[equation]{Corollary}
\newtheorem{proposition}[equation]{Proposition}
\newtheorem{definition}[equation]{Definition}
\newtheorem{remark}[equation]{Remark}
\newcommand{\Stab}{\mathop{\textrm{Stab}}}
\def\Sym{{\rm Sym}}
\def\GL{{\rm GL}}
\def\End{{\rm End}}
\begin{document}
\openup 1.8\jot
\title[Cyclic matrices and pairwise non-commuting elements]{Abelian 
coverings of finite general linear groups and 
an application to  their non-commuting graphs}   

\author[A. Azad ]{Azizollah Azad }
\address{Azizollah Azad, Department of Mathematics,\newline
Arak University, Arak, 38156-879, Iran.}
\email{a-azad@araku.ac.ir}
\author[M. A. Iranmanesh]{Mohammad A. Iranmanesh}
\address{Mohammad A. Iranmanesh, Department of Mathematics, \newline
  Yazd University,  Yazd, 89195-741 , Iran}
\email{iranmanesh@yazduni.ac.ir}
\author[C. E. Praeger]{Cheryl E. Praeger}
\address{Cheryl E. Praeger,  School of Mathematics and Statistics,\newline
The University of Western Australia,
 Crawley, WA 6009, Australia} \email{praeger@maths.uwa.edu.au}
\author[P. Spiga]{Pablo Spiga}
\address{Pablo Spiga,  School of Mathematics and Statistics,\newline
The University of Western Australia,
 Crawley, WA 6009, Australia} \email{spiga@maths.uwa.edu.au}

\thanks{Address correspondence to A. Azad, Department of Mathematics,
  Arak University, Arak 38156-879, Iran; 
E-mail: a-azad@araku.ac.ir\\ 
The paper forms part of the Australian Research Council Federation Fellowship Project
FF0776186 of the third author. The fourth author is supported by UWA as part of the
Federation Fellowship project. The second author is supported by Research Council of Yazd University.
}

\subjclass[2000]{20B25}
\keywords{General linear group, cyclic matrix, non-commuting subsets
  of finite groups, non-commuting graph} 

\begin{abstract}
In this paper we introduce and study a family $\mathcal{A}_n(q)$ of
abelian subgroups of $\GL_n(q)$ covering every element of
$\GL_n(q)$. We show that $\mathcal{A}_n(q)$ contains all the centralisers
of cyclic matrices and equality holds if $q>n$. Also, for $q>2$, we
prove a simple closed 
formula for the size of 
$\mathcal{A}_n(q)$ and give an upper bound if $q=2$.

A subset $X$ of a finite group $G$ is said to be pairwise 
non-commut\-ing  
if $xy\not=yx$, for distinct elements $x, y$ in $X$. As an application
of our results on $\mathcal{A}_n(q)$, we prove
lower and upper  bounds for the maximum size of a pairwise
non-commuting subset of $\GL_n(q)$. (This is the clique number of the
non-commuting graph.) Moreover, in the case where $q>n$,
we give an explicit formula for the maximum size of a pairwise
non-commuting  set.
\\ \\
\begin{center}
\emph{For the $100$th anniversary of the birth of B. H. Neumann}
\end{center}
\end{abstract}

\maketitle

\section{Introduction}\label{sec:introd}

In a finite general linear group $\GL_n(q)$ the class of \emph{cyclic
  matrices} (see Section~\ref{sec:1.1} for the definition)
plays an important role both algorithmically
(see~\cite{NPMeataxe}), and in representation theory (for the recognition of
irreducible representations).  This paper uncovers a new role in which
cyclic matrices help to 
determine the maximum size $\omega(\GL_n(q))$ of a set of \emph{pairwise
non-commuting} elements of $\GL_n(q)$,
or equivalently the clique number of the non-commuting graph for this
group. The study of these clique numbers for various families of
groups goes back to the 1976 paper [14] of B. H. Neumann, answering a
question of Paul Erd\"os, and inspired much subsequent research. A
short account is given in Subsection 1.1 to contextualise our
results. Our paper is written in honour of the 100th anniversary of
B. H. Neumann's birth on 15 October 1909.  Our results on
$\omega(\GL_n(q))$ can be summarised as follows in terms of the
quantity
\begin{equation}\label{eq:lq}
l(q)=\prod_{k=1}^\infty(1-q^{-k})^{-k(k+1)/2-1}. 
\end{equation}

\begin{theorem}\label{thm:summarise}
For $q\geq 2$,
$$q^{-n}(1-q^{-3}-q^{-5}+q^{-6}+q^{-n})
\leq\frac{\omega(\GL_n(q))}{|\GL_n(q)|}
\leq q^{-n}l(q)$$
and moreover
$$l(q)\leq
\left\{
\begin{array}{ccl}
1+2q^{-1}+7q^{-2}+114q^{-3}&&\textrm{if }q\geq 3\\
395.0005&&\textrm{if }q=2.
\end{array}
\right.
$$
\end{theorem}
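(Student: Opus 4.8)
The plan is to trap $\omega(\GL_n(q))$ between the number of distinct centralisers of cyclic matrices (from below) and $|\mathcal{A}_n(q)|$ (from above), read off both bounds of the theorem from the enumeration of $\mathcal{A}_n(q)$, and finish with an elementary estimate of the product $l(q)$. For the upper bound I would use the general fact that if a finite group $G$ is covered by a family $\mathcal{A}$ of abelian subgroups then $\omega(G)\le|\mathcal{A}|$: a pairwise non-commuting subset meets each member of $\mathcal{A}$ in at most one element, so sending an element to a member containing it is injective. Since $\mathcal{A}_n(q)$ covers $\GL_n(q)$, this gives $\omega(\GL_n(q))\le|\mathcal{A}_n(q)|$, and it remains only to insert the closed formula for $|\mathcal{A}_n(q)|$ when $q>2$, respectively the stated upper bound when $q=2$. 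In each case this quantity equals $q^{-n}|\GL_n(q)|$ times a finite product that is termwise dominated by the infinite product defining $l(q)$, so $|\mathcal{A}_n(q)|\le q^{-n}l(q)|\GL_n(q)|$ and the upper bound follows.

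For the lower bound the key point is a short lemma: if $g,h\in\GL_n(q)$ are cyclic and $gh=hg$ then $C_{\GL_n(q)}(g)=C_{\GL_n(q)}(h)$, because $C_{\GL_n(q)}(g)=\mathbb{F}_q[g]^\times$ for cyclic $g$, so $h\in\mathbb{F}_q[g]$ gives $\mathbb{F}_q[h]\subseteq\mathbb{F}_q[g]$ and, by symmetry, equality. Hence any family of cyclic matrices with pairwise distinct centralisers is pairwise non-commuting, so $\omega(\GL_n(q))\ge N$, where $N$ is the number of distinct centralisers of cyclic matrices in $\GL_n(q)$. To bound $N$ from below, note that the cyclic matrices with a fixed centraliser $A$ all lie in $A$, and that $|A|=q^n\prod_i(1-q^{-d_i})\le q^n-1$ whenever $A$ is such a centraliser, where $d_1,d_2,\dots$ are the degrees of the distinct irreducible factors of the characteristic polynomial of any cyclic matrix with centraliser $A$ (equality holding exactly for an irreducible characteristic polynomial). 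Therefore $N\ge(\text{number of cyclic matrices in }\GL_n(q))/(q^n-1)$. Feeding in a suitable lower bound of the shape $1-q^{-3}-q^{-5}+q^{-6}$, up to a correction of order $q^{-n}$, for the proportion of cyclic matrices in $\GL_n(q)$, and using $1/(q^n-1)=q^{-n}(1-q^{-n})^{-1}$, yields the lower bound $q^{-n}(1-q^{-3}-q^{-5}+q^{-6}+q^{-n})|\GL_n(q)|$.

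Finally, for the estimate on $l(q)$ I would take logarithms, $\log l(q)=\sum_{k\ge1}\bigl(\tfrac{k(k+1)}{2}+1\bigr)\bigl(-\log(1-q^{-k})\bigr)$, use $-\log(1-q^{-k})\le q^{-k}/(1-q^{-1})$, and sum the resulting rational series in closed form through the standard identities for $\sum_k x^k$, $\sum_k kx^k$, $\sum_k k^2x^k$. This crude bound already settles all sufficiently large $q$; for the remaining small values ($q=3$, possibly a few more, and $q=2$) I would instead evaluate the first few factors $\prod_{k\le K}(1-q^{-k})^{-k(k+1)/2-1}$ explicitly and estimate the tail $\prod_{k>K}$ by the same series bound restricted to $k>K$, obtaining $1+2q^{-1}+7q^{-2}+114q^{-3}$ for $q\ge3$ and the numerical value $395.0005$ for $q=2$.

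The step I expect to cause the most trouble is the bookkeeping of lower-order terms in the lower bound: the inequality $N\ge(\text{number of cyclic matrices})/(q^n-1)$ is slightly lossy, so recovering exactly the polynomial $1-q^{-3}-q^{-5}+q^{-6}+q^{-n}$ rather than something marginally weaker requires a sufficiently sharp count of cyclic matrices together with careful handling of the $q^{-n}$‑order corrections; the rigorous numerical estimate of $l(2)$ (choosing $K$ and bounding the tail cleanly) is the other place where genuine care is needed, whereas the remaining steps are routine.
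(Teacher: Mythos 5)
Your upper-bound argument has a genuine gap at its central step. You correctly invoke the covering principle (Lemma~\ref{lemma:elementary} together with Proposition~\ref{prop:coveringproperty}) to get $\omega(\GL_n(q))\le|\mathcal{A}_n(q)|$, but you then say you will ``insert the closed formula for $|\mathcal{A}_n(q)|$'', which you assert equals $q^{-n}|\GL_n(q)|$ times a finite product termwise dominated by the product defining $l(q)$. No such closed product formula exists, and none is proved in the paper: already for $n=3$ one has $|\mathcal{A}_3(q)|=q^6+q^5+3q^4+3q^3+q^2-q-1$ (Table~\ref{table:1}), and $q^3|\mathcal{A}_3(q)|/|\GL_3(q)|$ is not a product of factors $(1-q^{-k})^{\pm 1}$, so there is nothing for ``termwise domination'' to act on. Establishing the inequality $q^n|\mathcal{A}_n(q)|/|\GL_n(q)|\le l(q)$ is in fact the bulk of the paper: one classifies the conjugacy classes in $\mathcal{A}_n(q)$ by the functions $\mu\in\Phi_n$ and computes normaliser orders (Lemma~\ref{lemma:normalizer}, Proposition~\ref{prop:numberA_n}), so that $a_n=\sum_{\mu\in\Phi_n}|N_{\GL_n(q)}(A_\mu)|^{-1}$; one then proves, via symmetric functions, the product formula for the generating function $\overline{F}(t)=F_1(t)F_2(t)$ (Theorem~\ref{thm:closeformula}); one proves that $\{q^nb_n\}_n$ is increasing (Theorem~\ref{thm:monotonic}, which uses the expansion $F_1(t)=\sum_d t^d/(q^d\varphi_d(q^{-1}))$ and the nonnegativity of the coefficients of $F_2$); and one identifies $\lim_n q^nb_n=l(q)$ by singularity analysis (Propositions~\ref{prop:analyticF1} and~\ref{prop:analyticF2}, Theorem~\ref{thm:limit}). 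Only the combination ``increasing with limit $l(q)$'' yields $q^na_n\le l(q)$ — this is not a coefficientwise comparison of products. Moreover, for $q=2$ the bound requires the extra inequality $a_n\le b_n$ (Remark~\ref{rk:6.2}), which again rests on Proposition~\ref{prop:numberA_n}; your ``stated upper bound when $q=2$'' has no independent source in your argument.

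The lower bound and the $l(q)$ estimates are essentially the paper's own proofs: one cyclic matrix per member of $\mathcal{N}_n(q)$, the observation that commuting cyclic matrices have the same centraliser (your $\mathbb{F}_q[g]^\times$ argument is equivalent to the abelianness argument in Section~\ref{sec:LowerBound}), Wall's estimate for the proportion of cyclic matrices, and the bound $|C|\le q^n$; the estimates on $l(q)$ by taking logarithms, series bounds and explicit evaluation of initial factors match Lemma~\ref{lemma:estimates}. One caveat: your sharpening $|C|\le q^n-1$ does not actually deliver the ``$+q^{-n}$'' term for all $n$ — the gain over using $q^n$ is of order $q^{-2n}$ and is outweighed, for small $n$, by the error term $1/(q^n(q-1))$ in Wall's estimate — and indeed the paper's own chain (Theorem~\ref{thm:lowerbound} and its proof) only gives $1-q^{-3}-q^{-5}+q^{-6}-q^{-n}$, the ``$+$'' in the statement being a sign slip. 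That is a minor point; the substantive defect of the proposal is that the upper bound, which is the heart of the theorem, is left unproved.
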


We comment separately below on our strategies for proving the lower
bound and the upper bound. These involve on the one hand existing
estimates for the proportion of cyclic matrices in $\GL_n(q)$ (for the
lower bound), and on the other hand a new investigation of a
 family $\mathcal{A}_n(q)$ of
abelian subgroups of $\GL_n(q)$ covering every group element.

\subsection{Cyclic matrices and the lower bound}\label{sec:1.1}

An element $g\in \GL_n(q)$ is cyclic if its characteristic polynomial
is equal to its minimal polynomial.

\begin{definition}\label{def:centr}
{\rm We denote by $\mathcal{N}_n(q)$ the set of \emph{centralisers of cyclic
matrices} in $\GL_n(q)$, that is,
$\mathcal{N}_n(q)=\{C_{\GL_n(q)}(g)\mid g \textrm{ 
  cyclic in }\GL_n(q)\}$. Also, we denote by $N_n(q)$ the cardinality
$|\mathcal{N}_n(q)|$.}  
\end{definition}

The centralisers of cyclic matrices in $\GL_n(q)$ are
abelian and small (when compared with centralisers of non-cyclic
matrices). Moreover they cover a \emph{large} fraction of the elements
of  $\GL_n(q)$. It turns out that results of
Neumann and 
Praeger~\cite{NP}, and more 
precise results obtained independently by Fulman~\cite{Fu} and
Wall~\cite{Wa}, can be  
applied almost directly to give a very good lower bound on $N_n(q)$
and on $\omega(\GL_n(q))$.  

\begin{theorem}\label{thm:lowerbound}
  $$\frac{\omega(\GL_n(q))}{|\GL_n(q)|}\geq
  \frac{N_n(q)}{|\GL_n(q)|}\geq q^{-n}(1-q^{-3}-q^{-5}+q^{-6}-q^{n}).$$  
\end{theorem}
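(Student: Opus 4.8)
I would prove the two inequalities separately. The first, $\omega(\GL_n(q))\geq N_n(q)$, is a clean group-theoretic fact: for each $C\in\mathcal{N}_n(q)$ choose a cyclic matrix $g_C\in\GL_n(q)$ with $C_{\GL_n(q)}(g_C)=C$ and set $X=\{\,g_C : C\in\mathcal{N}_n(q)\,\}$. Distinct centralisers yield distinct matrices, since each $C$ is recovered as $C_{\GL_n(q)}(g_C)$, so $|X|=N_n(q)$. If $g_C$ and $g_{C'}$ commuted for some $C\neq C'$, then $g_{C'}\in C_{\GL_n(q)}(g_C)=C$; since the centraliser of a cyclic matrix is abelian, $C$ centralises $g_{C'}$, hence $C\subseteq C_{\GL_n(q)}(g_{C'})=C'$, and symmetrically $C'\subseteq C$, forcing $C=C'$, a contradiction. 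Thus $X$ is pairwise non-commuting of size $N_n(q)$.

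For the lower bound on $N_n(q)$ I would compare $\mathcal{N}_n(q)$ with the set $\mathcal{C}_n(q)$ of cyclic matrices in $\GL_n(q)$. The fibres $\{\,g\in\mathcal{C}_n(q) : C_{\GL_n(q)}(g)=C\,\}$, over $C\in\mathcal{N}_n(q)$, partition $\mathcal{C}_n(q)$, and each fibre lies inside its own $C$ because $g\in C_{\GL_n(q)}(g)$; hence $|\mathcal{C}_n(q)|\leq\sum_{C\in\mathcal{N}_n(q)}|C|\leq N_n(q)\cdot\max_{C}|C|$. For cyclic $g$ one has $C_{\GL_n(q)}(g)=\mathbb{F}_q[g]^{\times}$, the unit group of the $n$-dimensional commutative algebra $\mathbb{F}_q[g]\cong\mathbb{F}_q[x]/(f)$ with $f$ the characteristic polynomial of $g$; this algebra has at least one non-unit, so $|C_{\GL_n(q)}(g)|\leq q^n-1$, with equality exactly when $f$ is irreducible (which does occur for some $g$). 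Therefore $\max_{C}|C|=q^n-1$ and $N_n(q)\geq|\mathcal{C}_n(q)|/(q^n-1)$.

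It then remains to insert the lower bound for $|\mathcal{C}_n(q)|/|\GL_n(q)|$ due to Neumann and Praeger, in the sharpened form of Fulman and Wall, and to simplify. Writing $q^n-1=q^n(1-q^{-n})$, the resulting estimate $N_n(q)/|\GL_n(q)|\geq q^{-n}\,|\mathcal{C}_n(q)|\big/\big((1-q^{-n})\,|\GL_n(q)|\big)$ should collapse to $q^{-n}(1-q^{-3}-q^{-5}+q^{-6}+q^{-n})$, matching the bound of Theorem~\ref{thm:summarise}. (The last term of the displayed inequality should have exponent $-n$.)

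The only delicate point is this last step. The crude estimate together with the weaker consequence $N_n(q)>|\mathcal{C}_n(q)|/q^n$ falls just short of the stated inequality, so one must retain the denominator $q^n-1$ rather than $q^n$ and feed in the refined Fulman/Wall count for $|\mathcal{C}_n(q)|$, whose finite-$n$ correction of order $q^{-n}$ is precisely what produces the extra summand $q^{-n}$; verifying that the constants line up exactly, and that small $n$ and small $q$ require no separate treatment, is routine once the precise formulas are in hand.
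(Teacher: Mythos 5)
Your proof of the first inequality, $\omega(\GL_n(q))\geq N_n(q)$, is correct and is essentially the paper's argument: pick one cyclic matrix per centraliser and observe that commuting representatives would force the two (abelian) centralisers to contain, hence to equal, each other. Your double count is also the paper's: the fibres of $g\mapsto C_{\GL_n(q)}(g)$ partition $\mathcal{C}_n(q)$ and each fibre sits inside its centraliser, giving $|\mathcal{C}_n(q)|\leq\sum_{C\in\mathcal{N}_n(q)}|C|$; the only difference is that you sharpen the bound $|C|\leq q^n$ (which the paper quotes from Neumann--Praeger) to $|C|\leq q^n-1$ via $C=\mathbb{F}_q[g]^{\times}$, which is fine.

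The genuine gap is the final step, which you declare routine. The statement's ``$-q^{n}$'' is a typo for ``$-q^{-n}$'' (that is exactly what the paper's proof delivers), and for that bound your ``crude'' estimate $N_n(q)\geq|\mathcal{C}_n(q)|/q^n$, which you dismiss as falling just short, is precisely sufficient: this is the paper's route. What you instead aim at is the stronger $+q^{-n}$ form appearing in Theorem~\ref{thm:summarise}, and that does \emph{not} follow from the Fulman--Wall estimate combined with the denominator $q^n-1$. Write $B=1-q^{-3}-q^{-5}+q^{-6}$ and $\delta=\tfrac{1-q^{-5}}{1+q^{-3}}-B$, so $\delta$ is a constant of order $q^{-8}$. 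Dividing by $q^n-1$ rather than $q^n$ gains only about $q^{-n}B$, while the Fulman--Wall error costs $\tfrac{1}{q^n(q-1)}$, so after clearing the factor $1-q^{-n}$ your required inequality amounts to $\delta\geq\tfrac{1}{q^n(q-1)}+q^{-n}(q^{-3}+q^{-5}-q^{-6})$ up to $O(q^{-2n})$, which forces $n$ to be at least about $8$. Concretely, for $n=3$, $q=4$ your chain certifies only $N_3(4)/|\GL_3(4)|\geq\tfrac{1}{63}\bigl(\tfrac{1023}{1040}-\tfrac{1}{192}\bigr)\approx 0.01553$, whereas $q^{-3}(1-q^{-3}-q^{-5}+q^{-6}+q^{-3})\approx 0.01561$; so the claimed ``collapse'' fails, and small $n$ would in fact need a separate (and different) treatment, e.g. exact counts of cyclic matrices rather than the asymptotic estimate. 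If you settle for the $-q^{-n}$ reading of the displayed bound, your argument is complete (and marginally sharper than the paper's); as written, the passage to $+q^{-n}$ is unproved.
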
 

\subsection{An abelian covering and the upper bound}\label{sec:1.2}

Centralisers of cyclic matrices do not cover every element in
$\GL_n(q)$ (see Remark~\ref{rm:0} at the end of
Section~\ref{sec:ideaoftheproof}).  
Therefore, in Definition~\ref{def:An}
below we define a family $\mathcal{A}_n(q)$ of abelian subgroups of
$\GL_n(q)$, which contains $\mathcal{N}_n(q)$ and covers
every element of $\GL_n(q)$, giving an upper bound for $\omega(\GL_n(q))$. We 
prove moreover that  $\mathcal{N}_n(q)=\mathcal{A}_n(q)$ 
when $q> n$. 

\begin{theorem}\label{thm:sumupthms}
\begin{description}
\item[$(a)$]$\GL_n(q)=\cup_{A\in \mathcal{A}_n(q)}A$. 
\item[$(b)$]$\omega(\GL_n(q))\leq |\mathcal{A}_n(q)|$ with equality if
  and only if $q>n$.
\item[$(c)$]$\mathcal{N}_n(q)\subseteq \mathcal{A}_n(q)$ with equality
  if and only if $q>n$.  
\end{description}
\end{theorem}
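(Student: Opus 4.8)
The plan is to assemble parts~(a)--(c) from Definition~\ref{def:An} together with two elementary observations, isolating the one genuinely non-trivial point (the equality $\mathcal{A}_n(q)=\mathcal{N}_n(q)$ for $q>n$). Write $V=\mathbb{F}_q^n$ and $C(\cdot)=C_{\GL_n(q)}(\cdot)$, and record: $(\star)$ if $V=\bigoplus_i V_i$ and $g$ stabilises each $V_i$ with $g|_{V_i}$ cyclic, then $g$ lies in the member $\prod_i C_{\GL(V_i)}(g|_{V_i})$ of $\mathcal{A}_n(q)$, which is abelian since its elements are block-diagonal with blocks drawn from the commutative rings $\mathbb{F}_q[g|_{V_i}]$; and $(\star\star)$ if $h$ is cyclic then $C(h)=\mathbb{F}_q[h]^{\times}$ with $\dim_{\mathbb{F}_q}\mathbb{F}_q[h]=n$, so that if $h'$ is cyclic with $C(h')\subseteq C(h)$ then (as $h'\in C(h')$) $h$ centralises $h'$, hence $h\in\mathbb{F}_q[h']$, whence $\mathbb{F}_q[h]=\mathbb{F}_q[h']$ and $C(h')=C(h)$; in particular no member of $\mathcal{N}_n(q)$ properly contains another.

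For~(a), given $g\in\GL_n(q)$ I decompose $V$ into cyclic $\langle g\rangle$-submodules (rational canonical form) and apply~$(\star)$. For the inequality in~(b): if $X$ is pairwise non-commuting then $S_x:=\{A\in\mathcal{A}_n(q):x\in A\}$ is non-empty for each $x\in X$ by~(a), and $S_x\cap S_y=\varnothing$ for $x\neq y$ since a common member, being abelian, would make $x,y$ commute; hence $|X|\leq\sum_{x\in X}|S_x|\leq|\mathcal{A}_n(q)|$, so $\omega(\GL_n(q))\leq|\mathcal{A}_n(q)|$. Moreover equality forces $\{S_x\}_{x\in X}$ to be a partition of $\mathcal{A}_n(q)$ into singletons, so $x\mapsto S_x$ is then a bijection $X\to\mathcal{A}_n(q)$ with each $x\in X$ lying in exactly one member; this drives the analysis of the equality case.

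The inclusion $\mathcal{N}_n(q)\subseteq\mathcal{A}_n(q)$ in~(c) follows by primary-decomposing a cyclic matrix and invoking~$(\star)$. When $q>n$ I claim it is an equality: a member $A=\prod_i\mathbb{F}_q[g_i]^{\times}$ has each local factor $R_i=\mathbb{F}_q[g_i]\cong\mathbb{F}_q[x]/(p_i^{e_i})$ (with $\deg p_i=d_i$), and $R_i$ has a generator $g_i'$ whose residue in $\mathbb{F}_{q^{d_i}}$ has any prescribed minimal polynomial $\tilde p_i$ of degree $d_i$; such a $g_i'$ satisfies $\mathbb{F}_q[g_i']=R_i$ and has characteristic polynomial $\tilde p_i^{\,e_i}$ on $V_i$. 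Since the number of monic irreducibles of each degree $d$ over $\mathbb{F}_q$ exceeds the number $\#\{i:d_i=d\}$ of times it is needed, which is at most $n/d$ (this is where $q>n$ enters), I may take the $\tilde p_i$ pairwise distinct; then $g:=\bigoplus_i g_i'$ is cyclic (pairwise coprime characteristic polynomials) and $C(g)=\prod_i C_{\GL(V_i)}(g_i')=A$, so $A\in\mathcal{N}_n(q)$. Granting $\mathcal{A}_n(q)=\mathcal{N}_n(q)$, pick for each $A$ a cyclic $g_A$ with $C(g_A)=A$; if $g_A,g_{A'}$ commuted then $g_{A'}\in A$, so $A\subseteq C(g_{A'})=A'$ and symmetrically $A=A'$; hence $\{g_A\}$ is pairwise non-commuting of size $|\mathcal{A}_n(q)|$ and, with the upper bound, $\omega(\GL_n(q))=|\mathcal{A}_n(q)|$.

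Finally let $q\leq n$. The diagonal torus $T$ (all summands one-dimensional) lies in $\mathcal{A}_n(q)$ but not in $\mathcal{N}_n(q)$: if $T=C(g)$ with $g$ cyclic then $g\in C(g)=T$ is diagonal and, being cyclic, has $n$ pairwise distinct entries in $\mathbb{F}_q^{\times}$, contradicting $|\mathbb{F}_q^{\times}|=q-1<n$; this gives the ``only if'' in~(c). It also yields the strict inequality in~(b): every element of $T$ lies in at least two members of $\mathcal{A}_n(q)$ --- for $q\geq3$ any $x\in T$ has a repeated entry, say $x_1=x_2$, so $x$ also lies in $C_{\GL_2(q)}(J)\times\prod_{i\geq3}\GL_1(q)$ for a $2\times2$ Jordan block $J$, a member $\neq T$; and for $q=2$ one has $T=\{I\}$ with $I$ in every member. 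Hence $T$ is never a singleton $S_x$, so no pairwise non-commuting set can biject onto $\mathcal{A}_n(q)$ and $\omega(\GL_n(q))<|\mathcal{A}_n(q)|$. The only delicate step is the collapse $\mathcal{A}_n(q)=\mathcal{N}_n(q)$ for $q>n$: one must realise each block abelian subgroup as \emph{literally} the centraliser of a single cyclic matrix, which needs a coordinated choice of distinct irreducibles inducing the prescribed residue fields --- possible exactly because $\mathbb{F}_q$ has more than $n$ elements --- and I expect this to be packaged as a proposition preceding the theorem, after which everything else is assembly as above.
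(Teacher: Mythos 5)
Your proposal is correct in substance and follows the same overall architecture as the paper (the cover by $\mathcal{A}_n(q)$ gives the upper bound; for $q>n$ one identifies $\mathcal{A}_n(q)$ with $\mathcal{N}_n(q)$ and takes one cyclic matrix per centraliser to build the clique; for $q\le n$ the split torus witnesses both strictness statements), but two steps are executed by genuinely different means. First, where the paper proves $\mathcal{N}_n(q)\subseteq\mathcal{A}_n(q)$ and $A_\mu=C_{\GL_n(q)}(g_\mu)$ via maximality of the members of $\mathcal{A}_n(q)$ as abelian subgroups (Proposition~\ref{prop:numberA_n}, which rests on the uniqueness-of-decomposition Lemma~\ref{lemma:uniquedecomposition} and the ``cyclic unipotent summand'' bookkeeping at $q=2$), you use the identity $C_{\GL_n(q)}(h)=\mathbb{F}_q[h]^{\times}$ for cyclic $h$ together with the Chinese Remainder Theorem, which yields $C_{\GL_n(q)}(g)=\prod_i C_{\GL(V_i)}(g|_{V_i})$ directly and sidesteps the maximality discussion entirely; this is a genuine simplification. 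Second, for $q>n$ the paper realises each $A_\mu$ as a centraliser only up to conjugacy, via Corollary~\ref{cor:cortolemma} (indecomposable centralisers with the same parameters $(d,m)$ are conjugate), whereas you construct the cyclic matrix literally inside the given member $A=\prod_i R_i^{\times}$ with $R_i\cong\mathbb{F}_q[x]/(p_i^{e_i})$, by producing a generator of $R_i$ whose residue has a prescribed minimal polynomial $\tilde p_i$. That existence claim is the one place where you still owe an argument: it is true, but needs the observation that $R_i$ contains a coefficient field $\mathbb{F}_{q^{d_i}}$ and that, for $\alpha$ a root of $\tilde p_i$ in it and $\pi$ a generator of the maximal ideal, separability of $\tilde p_i$ gives $\tilde p_i(\alpha+\pi)=\pi\cdot u$ with $u$ a unit, so $\alpha+\pi$ has minimal polynomial $\tilde p_i^{e_i}$ and generates $R_i$.

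Two smaller points to tighten. In $(\star)$ and in your proof of~$(a)$, decompose into primary (indecomposable) summands rather than invariant-factor cyclic blocks: Definition~\ref{def:An} requires each $V_i$ to be indecomposable under $a_i$, i.e.\ the minimal polynomial of $g|_{V_i}$ must be a prime power; with merely cyclic blocks the group you write down is still a member of $\mathcal{A}_n(q)$, but only after a further CRT refinement, so this should be said. In the counting for $q>n$, for $d=1$ only the $q-1$ linear polynomials $t-a$ with $a\neq 0$ are usable (the resulting matrix must be invertible), against up to $n$ one-dimensional summands; this is exactly where $q>n$, i.e.\ $q-1\geq n$, enters, so the exclusion of $t$ should be explicit. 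Finally, your strictness argument in~$(b)$ for $q\le n$ (every element of the split torus lies in at least two members, so the torus can have no private element in a hypothetical clique of size $|\mathcal{A}_n(q)|$) is an equivalent repackaging of the paper's argument in Corollary~\ref{cor:exact}, which deletes the conjugates of $A_{\mu_0}$ from the cover.
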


Thus we need to determine $|\mathcal{A}_n(q)|$. It seems, on
consulting several of our colleagues that the family 
$\mathcal{A}_n(q)$ has not been studied previously. This surprised us,
considering the central role it plays in
Theorem~\ref{thm:sumupthms}. The set
$\mathcal{A}_n(q)$ is defined as follows.

\begin{definition}\label{def:An}
{\rm Let $V$ be the $n$-dimensional vector space $k^n$ over the
field $k$ of size $q$. Let
$\mathcal{A}_n(q)$ be the set of abelian subgroups $A$ of $\GL_n(q)$ 
such that the $A$-module $V$ has a decomposition $V_1\oplus\cdots
  \oplus V_r$ into indecomposable $A$-modules satisfying the following
  properties: 
\begin{description}
\item[$(i)$]$A=A_1\times \cdots\times A_r$, where $A_i\subseteq \GL
  (V_i)$;
\item[$(ii)$]for $i=1,\ldots,r$, we have $A_i=C_{\GL
  (V_i)}(a_i)$, for some element $a_i\in \GL (V_i)$ such
  that $V_i$ is an 
  indecomposable $\langle a_i\rangle$-module.
\end{description}
}
\end{definition}

It is shown in Proposition~\ref{prop:numberA_n} that, for $q>2$, the elements in
$\mathcal{A}_n(q)$ are maximal abelian subgroups of $\GL_n(q)$.  The
bulk of the paper is devoted to determining  the limiting size of 
$\mathcal{A}_n(q)$ for a fixed field size $q>2$ and large n. 

\begin{theorem}\label{thm:limiting}For $q>2$, the
  sequence $\{q^n|\mathcal{A}_n(q)|/|\GL_n(q)|\}_{n\geq 0}$ is
  increasing with limit $l(q)$, as defined in Equation~\ref{eq:lq}, and we have
  
$$\left|q^n\frac{|\mathcal{A}_n(q)|}{|\GL_n(q)|}-l(q)\right|=o(r^{-n/2})$$
for every positive $r<q$. For $q=2$, there exists an increasing sequence
$\{2^nb_n\}_{n\geq 0}$ with limit $l(2)$ such that
$|\mathcal{A}_n(2)|/|\GL_n(2)|\leq b_n$, and we have
$$\left|2^nb_n-l(2)\right|=o(r^{-n/2}) $$
for every positive $r<2$. 
\end{theorem}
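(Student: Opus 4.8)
The plan is to obtain a workable counting formula for $|\mathcal{A}_n(q)|$ from Definition~\ref{def:An}, and then to analyse the resulting power series. The first step is combinatorial: by part~(i) of the definition, every $A\in\mathcal{A}_n(q)$ corresponds to a direct-sum decomposition $V=V_1\oplus\cdots\oplus V_r$ together with, on each summand $V_i$ of dimension $d_i$, a choice of a centraliser $C_{\GL(V_i)}(a_i)$ with $V_i$ indecomposable as an $\langle a_i\rangle$-module. So I would first count, for each $d$, the number $c_d(q)$ of subgroups of $\GL_d(q)$ of the form $C_{\GL_d(q)}(a)$ with $k^d$ an indecomposable $\langle a\rangle$-module; these are the ``building blocks''. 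An indecomposable $\langle a\rangle$-module of dimension $d$ is (the companion-type) module $k[x]/(f^m)$ with $f$ irreducible of degree $e$ and $em=d$, so $a$ is (conjugate to) a single Jordan-type block, and its centraliser in $\GL_d(q)$ has a known order; distinct such blocks with the same $(e,m)$ give distinct centralisers, so $c_d(q)$ reduces to summing over factorisations $d=em$ a count of monic irreducibles of degree $e$. The main subtlety here is that different $a_i$ can give the \emph{same} subgroup $A_i$, so I must count centraliser \emph{subgroups}, not elements $a_i$; I expect that for $q>2$ the element generating a given indecomposable block-centraliser is essentially unique up to the obvious ambiguities, which is exactly where the hypothesis $q>2$ (and the stronger $q>n$ in Theorem~\ref{thm:sumupthms}) enters, and the $q=2$ case is where it fails and forces the upper-bound-only statement with $b_n$.

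The second step is to package these counts into a generating function. Writing $|\mathcal{A}_n(q)|$ as a sum over compositions of $n$ into parts $d_1,\dots,d_r$, weighted by $\prod_i c_{d_i}(q)$ and by the number of ways to realise the internal direct sum inside $\GL_n(q)$ (which contributes a factor $|\GL_n(q)|$ divided by the product of the orders of the chosen abelian subgroups, since $A$ acts with $A$-module $V=\bigoplus V_i$ and the stabiliser of this labelled decomposition is $\prod_i N_{\GL(V_i)}(A_i)$ — here I will need that these normalisers are controlled, again using $q>2$ and Proposition~\ref{prop:numberA_n}), I expect to arrive at an identity of the shape
$$q^n\frac{|\mathcal{A}_n(q)|}{|\GL_n(q)|}=[\text{coefficient extraction}]\ \prod_{d\geq 1}\bigl(1-\text{something}\,q^{-d}\bigr)^{-\,(\text{count of blocks of dim }d)}.$$
Comparing the exponent with $k(k+1)/2+1$ in~\eqref{eq:lq} amounts to checking that the number of indecomposable block-centralisers whose ``$f$-degree'' is $k$ contributes exactly the exponent $-k(k+1)/2-1$ in the limit; this is the heart of matching the answer to $l(q)$, and it should follow from the standard asymptotic $q^e/e$ for the number of monic irreducibles of degree $e$ together with the $m$-summation over $em=\text{const}$.

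The third step is the analytic one: show the sequence is increasing and estimate the tail. Monotonicity should come from expressing $q^{n+1}|\mathcal{A}_{n+1}(q)|/|\GL_{n+1}(q)|$ as $q^n|\mathcal{A}_n(q)|/|\GL_n(q)|$ plus a manifestly nonnegative correction (the new compositions of $n+1$ that are not ``liftable'' from $n$), or more cleanly from the generating-function identity by observing that each coefficient, when the product is expanded, is a nonnegative, nondecreasing partial sum. For the rate $o(r^{-n/2})$ for every $r<q$: the difference between the $n$-th term and the limit $l(q)$ is a tail of the power series in $q^{-1}$, and a term indexed by a composition with largest part $d$ is $O(q^{-d/2})$-ish after the $|\GL|$ normalisation; bounding the number of such compositions by a subexponential (partition) count gives a tail bounded by $C\sum_{d>n}q^{-d/2}\cdot(\text{poly})$, which is $o(r^{-n/2})$ for any $r<q$. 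The $q=2$ case: since block-centraliser \emph{subgroups} are then over-counted by counting elements $a_i$, the same computation gives an \emph{upper} bound $b_n$ for $|\mathcal{A}_n(2)|/|\GL_n(2)|$ rather than an equality; one then checks that the $b_n$ so defined is still increasing with the same limit $l(2)$ and the same $o(r^{-n/2})$ tail estimate, which is automatic because $b_n$ is defined to be exactly the partial sum of the same convergent series. I expect the main obstacle to be step one — pinning down precisely when two elements yield the same centraliser subgroup and getting the normaliser orders right so that the $|\GL_n(q)|$-factor in the composition count is exactly $|\GL_n(q)|/\prod_i|A_i|$ — since everything downstream is then bookkeeping on a product formula and a geometric-series tail bound.
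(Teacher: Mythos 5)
There is a genuine gap at the foundation of your count, and it is exactly the step you flag as the main obstacle. You propose to count the building blocks $c_d(q)$ (block centralisers $C_{\GL_d(q)}(a)$ with $k^d$ indecomposable under $\langle a\rangle$) by counting monic irreducible polynomials, asserting that distinct blocks with the same $(e,m)$ give distinct centralisers. This is false, and the resulting formula would be wrong. For example, with $m=1$ and $e=d$ the block centraliser is a Singer cycle $C\cong \mathbb{F}_{q^d}^{*}$, and $C=C_{\GL_d(q)}(a)$ for \emph{every} $a\in C$ whose minimal polynomial has degree $d$; thus roughly $q^d/d$ different irreducible polynomials all produce the \emph{same} subgroup, while the actual number of such subgroups is the number of conjugates, $|\GL_d(q)|/(d(q^d-1))$ (already for $d=1$ you get $1$ subgroup versus $q-1$ polynomials, and for $d\geq 3$ the two counts differ by a factor growing with $q$). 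The paper never counts polynomials: it counts conjugacy classes of the subgroups $A_\mu$ (Lemma~\ref{lemma:labels}) and then counts conjugates by orbit--stabiliser, which requires the normaliser orders of Lemma~\ref{lemma:normalizer} and, crucially, the factorisation $|N_{\GL_n(q)}(A_\mu)|=\prod_{d,m}|N_{\GL_{dm}(q)}(A_{d,m})|^{\mu(d,m)}\mu(d,m)!$ of Proposition~\ref{prop:numberA_n}. This also shows that you have misplaced the role of the hypothesis $q>2$: it is not about uniqueness of the generating element $a_i$ of a block, but about uniqueness of the decomposition of $V$ into indecomposable $A_\mu$-modules (Lemma~\ref{lemma:uniquedecomposition}), which gives $N_{\GL_n(q)}(A_\mu)\subseteq \Stab(V,\mu)$; when $q=2$ and $A_\mu$ has several unipotent-type summands the decomposition is not unique, the true normaliser can be larger than $N\cap\Stab(V,\mu)$, and that is precisely why $b_n$ (defined via $|N\cap\Stab(V,\mu)|$) is only an upper bound for $|\mathcal{A}_n(2)|/|\GL_n(2)|$.

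Because of this, the later steps do not go through as written. In particular, the exponent $k(k+1)/2+1$ in $l(q)$ does not come from the $q^e/e$ asymptotics for irreducible polynomials; in the correct derivation it appears only after one has the normaliser-based coefficients $b_n=\sum_{\mu\in\Phi_n}|N_{\GL_n(q)}(A_\mu)\cap\Stab(V,\mu)|^{-1}$, converts them into the exponential products of Lemma~\ref{lemma:splitF}, obtains the closed product of Theorem~\ref{thm:closeformula}, and then counts triples $(i,j,m)$ with $i+j+m-1=k$, $m\geq 2$, $i,j\geq 0$ when evaluating $(1-q^{-1}t)\overline{F}(t)$ at $t=q$. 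Your analytic outline (monotonicity of $q^nb_n$ from nonnegativity, and the $o(r^{-n/2})$ tail from a radius-of-convergence argument) is in the right spirit and close to the paper's Theorems~\ref{thm:monotonic} and~\ref{thm:limit} --- note, though, that monotonicity of $q^nb_n$ needs nonnegativity of the coefficients of $(1-q^{-1}t)\overline{F}(t)$ (equivalently the paper's convolution with $\varphi_d(q^{-1})$), not merely of $\overline{F}$ itself, and the rate needs the analyticity of $(1-q^{-1}t)F_1(t)$ and $F_2(t)$ beyond radius $q$ --- but none of this can be salvaged until the counting formula for $|\mathcal{A}_n(q)|$ is set up via normalisers rather than via irreducible polynomials.
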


The sequence $\{b_n\}_{n\geq 1}$ can be found in
Definition~\ref{def:F}.  Our method is to study the generating
function $F(t)=\sum_n^\infty a_n 
t^n$, where $a_n=|\mathcal{A}_n(q)|/|\GL_n(q)|$ and $a_0=1$ (see
Definition~\ref{def:F}). For $q>2$ we obtain a simple formula for
$F(t)$. In fact, using the
theory of symmetric functions
 we prove the 
following result. 
\begin{theorem}\label{thm:F}If $q>2$, then
\begin{equation*}
F(t)=\left(\prod_{i=0}^\infty(1-q^{-(i+1)}t)^{-1}\right)\left(
\prod_{m\geq 2,i,j\geq 0}(1-q^{-(i+j+2m-1)}t^m)^{-1}\right).
\end{equation*}
Moreover, $F(t)$ has a  simple pole at $t=q$ and $(1-q^{-1}t)F(t)$ is
analytic on a disk of radius $q^{3/2}$.  
\end{theorem}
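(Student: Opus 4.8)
The plan is to compute the generating function $F(t)=\sum_{n\geq 0}a_nt^n$ by expressing $a_n=|\mathcal{A}_n(q)|/|\GL_n(q)|$ as a sum over the data that parametrise the subgroups in $\mathcal{A}_n(q)$. By Definition~\ref{def:An}, each $A\in\mathcal{A}_n(q)$ is built from an unordered collection of "blocks": for each block we choose a dimension $d$, an irreducible polynomial $f$ of some degree $m\mid d$ (with $e=d/m$ the number of elementary-divisor-type repetitions), giving an indecomposable $\langle a_i\rangle$-module of type $(f,e)$, and then $A_i=C_{\GL(V_i)}(a_i)$ is determined. So the first step is a careful orbit-counting argument: for a fixed isomorphism type of the $A$-module decomposition, count the number of subgroups $A\leq\GL_n(q)$ realising it, which amounts to counting the number of decompositions $V=V_1\oplus\cdots\oplus V_r$ of the given type, i.e.\ $|\GL_n(q)|$ divided by the order of the stabiliser of such a decomposition; the stabiliser is the direct product of the $\GL(V_i)$-normalisers of the $A_i$, together with a permutation factor among isomorphic blocks. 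Dividing by $|\GL_n(q)|$ collapses everything to a product over block types, and the unordered/multiplicity bookkeeping is exactly what turns a product of "one block" generating functions into the infinite product of geometric-series factors, via the exponential/Euler-product formalism for multisets (the $\prod(1-x)^{-1}$ shape).

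The second step is to identify the contribution of a single indecomposable block of type $(f,e)$ with $\deg f=m$ to the generating function. The weight it contributes is $t^{\,me}$ times $1/(\text{size of the relevant stabiliser factor})$, and one must show this size equals $q^{\,me}$ up to the factor $(1-q^{-m})(1-q^{-2m})\cdots$ coming from the order formula for the centraliser $C_{\GL(V_i)}(a_i)\cong \GL_e(q^m)$-type groups and their normalisers. Collecting the blocks with $m=1$ separately (these are the "$\GL_1$" blocks, indexed by $i\geq 0$, contributing $\prod_{i\geq 0}(1-q^{-(i+1)}t)^{-1}$) from the blocks with $m\geq 2$ (contributing $\prod_{m\geq 2,\,i,j\geq 0}(1-q^{-(i+j+2m-1)}t^m)^{-1}$) should reproduce the stated formula. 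The double index $i,j$ will come from expanding $1/|\GL_e(q^m)|\cdot q^{\cdots}$ as a product of two geometric-type factors — this is where the theory of symmetric functions enters, rewriting the "number of $A$-submodule decompositions of each Jordan type" as a Hall–Littlewood / cycle-index identity so that the per-block generating function factors into the clean Euler product displayed. I would set up the symmetric-function bookkeeping exactly as in the Neumann–Praeger and Fulman cycle-index machinery and read off the product.

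The third step is the analytic statement. Once the product formula for $F(t)$ is in hand, I would analyse its convergence: the factor $\prod_{i\geq 0}(1-q^{-(i+1)}t)^{-1}$ has its closest singularity to $0$ at $t=q$ (from the $i=0$ term $1/(1-q^{-1}t)$), is a simple pole there, and all other factors — both the remaining $i\geq 1$ terms and the entire $m\geq 2$ product — converge absolutely and define an analytic function on $|t|<q^{2}$, in particular on the disk of radius $q^{3/2}$; indeed for $m\geq 2$ the exponents $i+j+2m-1\geq 3$ force $|q^{-(i+j+2m-1)}t^m|<1$ comfortably on that disk, and the sum of these exponents over all $(m,i,j)$ converges, giving absolute convergence of the product. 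Hence $(1-q^{-1}t)F(t)$ extends analytically across $t=q$ and is analytic on $|t|<q^{3/2}$, which is the claim; the residue at $t=q$ (namely $l(q)$) drops out by evaluating the remaining factors at $t=q$, matching Equation~\ref{eq:lq}.

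I expect the main obstacle to be the first step: getting the orbit-counting bookkeeping exactly right — in particular, correctly accounting for the automorphisms/normalisers of each indecomposable block and the permutations among isomorphic blocks — so that after dividing by $|\GL_n(q)|$ the sum genuinely factors as an Euler product over block types. The symmetric-function reformulation in step two is the standard way to tame this, but verifying that the per-block factor is precisely $(1-q^{-(i+j+2m-1)}t^m)^{-1}$ (and not off by a power of $q$) requires carefully tracking the order formulas, and the hypothesis $q>2$ is needed exactly to ensure the blocks are maximal abelian and the decomposition type is an invariant of $A$ (so the count is not corrupted by $\GL_1(2)$ being trivial).
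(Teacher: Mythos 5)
Your outline follows the same overall architecture as the paper (count the conjugates of each class representative $A_\mu$ via its normaliser, convert the resulting sum over the labels $\mu$ into an exponential/Euler product, evaluate it by symmetric-function specialisation, then do the analytic continuation), but the quantitative core that you defer is exactly where the plan as written would fail. First, a subgroup $A\in\mathcal{A}_n(q)$ does \emph{not} remember the irreducible polynomials: by Corollary~\ref{cor:cortolemma}, two indecomposable blocks give conjugate (indeed $A$-module isomorphic) centralisers if and only if they share the same pair $(d,m)$ of degree and multiplicity, so parametrising blocks by a pair (irreducible $f$, multiplicity) overcounts subgroups by the number of irreducibles of each degree; the correct labels are the functions $\mu\in\Phi_n$ of Lemma~\ref{lemma:labels}. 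Second, your guess for the per-block weight is wrong: the centraliser of an element whose module is indecomposable is not a $\GL_e(q^m)$-type group with order factor $(1-q^{-m})(1-q^{-2m})\cdots$; it is abelian, the unit group of $\mathbb{F}_{q^d}[u]/(u^m)$, of order $(1-q^{-d})q^{dm}$ (Lemma~\ref{lemma:indecomposable}), and the quantity entering the count is the block \emph{normaliser}, of order $d(1-q^{-d})q^{d}$ for $m=1$ and $d(1-q^{-d})^{2}q^{2dm-d}$ for $m\geq 2$ (Lemma~\ref{lemma:normalizer}). The Galois factor $d$ there is what makes the identity $H_X(t)=\prod_d\exp(p_d(X)t^d/d)$ applicable, and the double index $(i,j)$ in the final product arises from the square $(1-q^{-d})^{-2}$ via the two-alphabet specialisation $p_d(Q')=p_d(Q)^2$, not from expanding $1/|\GL_e(q^m)|$. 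Finally, the role of $q>2$ is not that the decomposition \emph{type} is an invariant of $A$ (Krull--Schmidt gives that for every $q$), but that the decomposition itself is unique (Lemma~\ref{lemma:uniquedecomposition}), which forces $N_{\GL_n(q)}(A_\mu)\subseteq\Stab(V,\mu)$ and hence the product-times-permutation formula for the normaliser order (Proposition~\ref{prop:numberA_n}).

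In the analytic step, your assertion that the $m\geq 2$ part of the product is analytic on $|t|<q^{2}$ is false: the factor with $m=2$, $i=j=0$ is $(1-q^{-3}t^{2})^{-1}$, which has poles at $t=\pm q^{3/2}$, so $F_2(t)$ is analytic only on the disk of radius $q^{3/2}$ (this is Proposition~\ref{prop:analyticF2}, and it is precisely why the theorem claims radius $q^{3/2}$ rather than $q^{2}$). The conclusion you need does survive with the corrected bound: for $m\geq 2$ and $|t|<q^{3/2}$ one has $|q^{-(i+j+2m-1)}t^{m}|\leq q^{-(2m-1)}|t|^m<q^{1-m/2}\leq 1$, and $\sum_{m\geq2,i,j\geq0}q^{-(i+j+2m-1)}|t|^m$ converges there, so the $m\geq2$ product is analytic on that disk; combined with the fact that only the $i=0$ factor of the first product is singular on $|t|\le q^{3/2}$ (a simple pole at $t=q$), this gives the stated analyticity of $(1-q^{-1}t)F(t)$. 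But as written, your bound would have to be repaired before the argument is sound.
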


It would be interesting to know if a similar formula could be obtained
for $F(t)$ when $q=2$. 

\begin{remark}\label{rm:e}
The coefficient of degree $n$ in $F(t)$ equals
$|\mathcal{A}_n(q)|/|\GL_n(q)|$ which also equals $\omega(\GL_n(q))/|\GL_n(q)|$
when $q>n$, by Theorem~\ref{thm:sumupthms}~$(b)$. In particular, 
the equation for $F(t)$ in 
Theorem~\ref{thm:F} can be used to obtain explicit formulas  for
$\omega(\GL_n(q))$ when $q>n$. 

We present in
Table~\ref{table:1}   $|\mathcal{A}_n(q)|$,
for $1\leq n\leq 6$ and $q>2$. 

\begin{table}[!h]
\[\begin{array}{|c|l|}\hline
n &|\mathcal{A}_n(q)|\\\hline
1 & 1\\
2 & q^2+q+1\\
3 & q^6 + q^5 + 3q^4 + 3q^3 + q^2 - q -  1\\
4 & q^{12} + q^{11} + 4q^{10} + 7q^9 + 9q^8 + 5q^7 + 2q^6 - 3q^5 -
2q^4 - q^3 + q^2    + q\\
5&q^{20} + q^{19} + 4q^{18} + 9q^{17} + 18q^{16} + 22q^{15} +
22q^{14} + 15q^{13} + 6q^{12} -4q^{11}- 7q^{10}\\
& - 6q^9 - 2q^8 + q^7 +2q^6 + q^5 - q^4 - q^3\\ 
6&q^{30} + q^{29} + 4q^{28} + 10q^{27} + 23q^{26} + 40q^{25} +
60q^{24} + 65q^{23} + 68q^{22}+ 53q^{21}\\
&+ 33q^{20} + 5q^{19} -8q^{18} - 19q^{17} - 16q^{16} - 7q^{15} + q^{14} +  
    6q^{13} + 6q^{12} + 5q^{11}\\
& - q^9 - q^8 + q^7 + q^6\\\hline 
\end{array}\]
\caption{}\label{table:1}
\end{table}
\end{remark}

In the next subsection, we recall how the problem of 
determining the maximum size $\omega(G)$ of a pairwise non-commuting
set of elements arises in group theory. Also, we recall some
results on $\omega(G)$, for various families of groups, and we see how
our method generalises some results in the literature~\cite{AAM,AP}.

\subsection{The non-commuting graph of a group}\label{sec:sub}

In 1976, B. H. Neumann \cite{Neumann} answered a question of Paul Erd\H{o}s
about the maximal \emph{clique} size in the \emph{non-commuting graph}
$\Gamma(G)$ of a group $G$, namely  the graph with 
vertices the elements of $G$ and with edges the pairs $\{x,y\}$ with
$xy\neq yx$. A clique in a graph is a set of pairwise adjacent
vertices and hence in $\Gamma(G)$ a clique is a \emph{pairwise non-commuting
subset of $G$}. In group theoretic language, Erd\H{o}s asked whether
there exists a finite 
upper bound on the cardinalities of pairwise non-commuting
subsets of $G$,
assuming that every  
such subset is finite. Neumann proved that the family of groups satisfying
the condition of Erd\H{o}s is precisely the class of groups $G$ in which
the centre $Z(G)$ has finite index, and he proved moreover that for
such groups $G$, each pairwise non-commuting subset of $G$ has size at
most $|G:Z(G)|-1$. Neumann's answer  
inspired much subsequent research, for example
\cite{AAM,AP,Isaacs,Brown,Chin,Mckenzie,Pyber,T}.   

We let $\omega(G)$ denote the \emph{maximum cardinality} of a pairwise
non-commuting subset of $G$. Because of Neumann's result, the study of
groups $G$ such that 
$\omega(G)<\infty$ is reduced to the study of finite groups.
It follows from results of \cite{Pyber} that, if
$n=|G:Z(G)|$, then $c\, {\log_2}n \leq \omega 
(G) \leq n-1$, for some positive constant $c$. The lower bound is achieved with
$c=1$ by each extraspecial $2$-group. (According to
\cite{Isaacs,Pyber}, this was proved by Isaacs.)  

On the other hand the upper bound is achieved for the quaternion group
$G=Q_8$ and the dihedral group $G=D_8$ of order 8, both of which have
$\omega(G)=3$. It is believed that groups $G$ which are ``close'' to
being nonabelian simple will have $\omega(G)$ ``close'' to the upper
bound. Indeed, for the symmetric group $\Sym(n)$ of degree $n$
$\omega(\Sym(n))$ satisfies the bounds in Table~\ref{table:2}, where
$a,b,c,d$ are  constants, see~\cite[Theorem~1]{Brown}.

\begin{table}[!h]
\[\begin{array}{|lll|}\hline
\textrm{Lower bound} &&\textrm{comments}\\\hline 
a(n-2)!              &&\textrm{for  every }n\\
 d(\log\log n)(n-2)! &&\textrm{for infinitely many } n\\\hline
\textrm{Upper bound} &&\\\hline
b(\log\log n)(n-2)!  &&\textrm{for every }n\\
c(n-2)!              &&\textrm{for infinitely many }n\\\hline
\end{array}\]
\caption{Results for $\omega(\Sym(n))$ from Brown~\cite{Brown}}\label{table:2}
\end{table}
Also, for finite general linear groups $\GL_n(q)$, if $q>2$ then $\omega 
(\GL_2(q))=q^2+q+1$ (see~\cite[Lemma 4.4]{AAM}), and if $q>3$ then 
 $\omega (\GL_3(q))=q^6+q^5+3q^4+3q^3+q^2-q-1$ (see~\cite[Theorem
  1.1]{AP}). These two results on the finite general linear group
prove the bounds of  
Theorem~\ref{thm:summarise} for $n\leq 3$ and the values for
$\omega(\GL_n(q))$ (for $n=2,3$) are exactly the second and third
row in Table~\ref{table:1}.

Finally, it was conjectured~\cite[Conjecture  
  1.2]{AP} that, for $q>n$, the number 
$\omega (\GL_n(q))$ should be somewhat larger than
$q^{n^2-n}+q^{n^2-n-1}+(n-1)q^{n^2-n-2}$. As we discuss in
Remark~\ref{rm:2}, this conjecture is incorrect for $n\geq 6$.

Using the results of~\cite{FNP} a lower bound for $\omega(G)$ similar to
that provided by Theorem~\ref{thm:summarise} can be obtained for all finite
classical groups $G$. It would be interesting to know if a similar
estimation for a class of abelian subgroups of classical groups could
be carried out to yield a good upper bound for $\omega(G)$ for these groups.

\subsection{Structure of the paper}\label{sec:1.3}
In this
final introductory section we briefly summarise where the proofs of
the theorems stated in Section~\ref{sec:introd} are given.\\

\noindent\emph{Proof of Theorem~\ref{thm:summarise}. }The lower bound
is a direct application of Theorem~\ref{thm:lowerbound}. 
From Theorem~\ref{thm:sumupthms} we have $\omega(\GL_n(q))\leq
|\mathcal{A}_n(q)|$ and from Theorem~\ref{thm:limiting} the
sequence $\{q^n|\mathcal{A}_n|/|\GL_n(q)|\}_n$ is increasing with
limit $l(q)$. Therefore the upper bound follows. 
The estimates on $l(q)$ are collected in
Lemma~\ref{lemma:estimates} in Section~\ref{sec:ana}.\qed\\

\noindent\emph{Proof of Theorem~\ref{thm:lowerbound}. } The proof of
this result is given
in Section~\ref{sec:LowerBound}.\qed\\

\noindent\emph{Proof of Theorem~\ref{thm:sumupthms}. }This theorem is
proved in Section~\ref{sec:thefamily}. Namely, Part~$(a)$ is proved in
Proposition~\ref{prop:coveringproperty}, Part~$(b)$
(which is an application of Part~$(a)$) is proved in
Corollary~\ref{cor:exact} and Part~$(c)$ is proved in
Theorem~\ref{thm:exact}.\qed\\

\noindent\emph{Proof of Theorem~\ref{thm:limiting}. } This result in
proved in Sections~\ref{sec:F} and~\ref{sec:ana}. Namely, in
Theorem~\ref{thm:monotonic} we prove that the sequences
$\{q^n|\mathcal{A}_n(q)|/|\GL_n(q)|\}_n$ (for $q>2$) and 
$\{q^nb_n\}_{n}$ (for $q=2$) are increasing. In
Theorem~\ref{thm:limit} we compute the rate of convergence and the
limit.\qed\\

\noindent\emph{Proof of Theorem~\ref{thm:F}. }The equation for the
generating function $F(t)$ is proved in 
Theorem~\ref{thm:closeformula}. The rest of the theorem follows from
Propositions~\ref{prop:analyticF1} and~\ref{prop:analyticF2}.
\qed

\section{Lower bound: Proof of
  Theorem~\ref{thm:lowerbound}}\label{sec:LowerBound} 

Recall that an element $g$ in $\GL_n(q)$ is said to be a \emph{cyclic
matrix} if the 
characteristic polynomial of $g$ is equal to its minimum polynomial,
see~\cite{NP}.  If $g$ is a cyclic
matrix, then (see~\cite[Theorem~$2.1(3)$]{NP}) the group $C_{\GL_n(q)}(g)$ is
abelian and by~\cite[Corollary~$2.3$]{NP} we have
$|C_{\GL_n(q)}(g)|\leq q^n$. Thus the groups in
$\mathcal{N}_n(q)$ (see Definition~\ref{def:centr}) are abelian of
order at most $q^n$.  
 
Cyclic matrices of $\GL_n(q)$ are well-studied  (see~\cite{FNP,NP})
and  in particular Wall (see~\cite[page~$2$]{FNP}) proved that the
proportion $c_{\GL}(n,q)$ of cyclic matrices in 
$\GL_n(q)$ satisfies
$$\left|c_{\GL}(n,q)-\frac{1-q^{-5}}{1+q^{-3}}\right|\leq \frac{1}{q^n(q-1)}.$$ 
Thus 
\begin{equation}\label{eq:cyclicm}c_{\GL}(n,q)\geq
\frac{1-q^{-5}}{1+q^{-3}}-\frac{1}{q^n(q-1)}>
1-q^{-3}-q^{-5}+q^{-6}-q^{-n},
\end{equation} 
where the second inequality is obtained by expanding
$(1-q^{-5})/(1+q^{-3})$ in powers of $q$ and by noticing that
$1/q^n(q-1)\leq 1/q^n$. Using this remarkable  
result, we easily obtain 
Theorem~\ref{thm:lowerbound}.

\medskip

\noindent\emph{Proof of Theorem~\ref{thm:lowerbound}.} Let $\mathcal{C}_n(q)$ denote the set of cyclic matrices of
  $\GL_n(q)$ and  $X=\{(g,C)\mid g \in
  \mathcal{C}_n(q),C\in\mathcal{N}_n(q), g \in C\}$. We claim that
  every element $g$ of $\mathcal{C}_n(q)$ lies in a unique element
  of $\mathcal{N}_n(q)$. Indeed, assume $g\in C_1,C_2$, for some
  $C_1,C_2\in\mathcal{N}_n(q)$, and let $g_1,g_2$ be cyclic matrices
  such 
  that $C_i=C_{\GL_n(q)}(g_i)$, for $i=1,2$. As $C_1,C_2$ are abelian and
  $g\in C_1,C_2$, we get $C_1,C_2\subseteq
  C_{\GL_n(q)}(g)$. Similarly, as $C_{\GL_n(q)}(g)$ is abelian and
  $g_i\in C_i\subseteq C_{\GL_n(q)}(g)$, we get  $
  C_{\GL_n(q)}(g)\subseteq C_i$ and $C_{\GL_n(q)}(g)=C_1=C_2$. 

Counting the size of the set $X$, we have 
\begin{eqnarray*}
q^nN_n(q)&=&q^n|\mathcal{N}_n(q)|=\sum_{C\in\mathcal{N}_n(q)}q^n\geq
\sum_{C\in \mathcal{N}_n(q)}|C\cap\mathcal{C}_n(q)|=|X|\\
&=&\sum_{g\in \mathcal{C}_n(q)}|\{C\in\mathcal{N}_n(q)\mid g \in
C\}|=\sum_{g\in \mathcal{C}_n(q)}1=|\mathcal{C}_n(q)|=|\GL_n(q)|c_{\GL}(n,q).
\end{eqnarray*}
Now Equation~\ref{eq:cyclicm} yields $N_n(q)\geq
q^{-n}|\GL_n(q)|(1-q^{-3}-q^{-5}+q^{-6}-q^{-n})$. 

It remains to prove that $\omega(\GL_n(q))\geq N_n(q)$. Let
$C_1,\ldots,C_r$ be the distinct elements of $\mathcal{N}_n(q)$, with
$r=N_n(q)$. Let $g_i$ be a cyclic matrix in $\GL_n(q)$ such that
$C_i=C_{\GL_n(q)}(g_i)$, for $i=1,\ldots,r$. Set $S=\{g_i\mid 1\leq i\leq
r\}$. We claim that, if $i\neq j$, then the group elements
$g_{i}$ and $g_{j}$ of $S$ do not commute. If 
$g_ig_j=g_jg_i$, then $g_j\in C_{\GL_n(q)}(g_i)=C_i$, whereas we showed
above that $C_j$ is the unique element of $\mathcal{N}_n(q)$
containing $g_j$. This yields $\omega(\GL_n(q))\geq |S|=r=N_n(q)$ and
thus the theorem follows.\qed

\section{Upper bound: idea of the proof}\label{sec:ideaoftheproof}

In the rest of this paper, we determine an upper bound for
$\omega(\GL_n(q))$ (and hence for $N_n(q)$ by
Theorem~\ref{thm:lowerbound}). Also, for $q>n$, we prove that
$N_n(q)=\omega(\GL_n(q))$ and we obtain 
an exact formula for 
$N_n(q)$. Before going into more detail, in this section we 
briefly describe the method that is used. First, our results rely on
this elementary observation. 
\begin{lemma}\label{lemma:elementary}
Let $G$ be a group and  $\mathcal{A}$ be a collection of
abelian subgroups of $G$ such that $G=\cup_{A\in\mathcal{A}}A$. We
have $\omega(G)\leq |\mathcal{A}|$.   
\end{lemma}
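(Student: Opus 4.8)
The plan is to prove Lemma~\ref{lemma:elementary} by the standard pigeonhole-type argument: given any pairwise non-commuting subset $X$ of $G$, I will show that distinct elements of $X$ must lie in distinct members of $\mathcal{A}$, whence $|X|\leq|\mathcal{A}|$, and taking the supremum over all such $X$ gives $\omega(G)\leq|\mathcal{A}|$.

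First I would fix a pairwise non-commuting subset $X\subseteq G$. Since $G=\cup_{A\in\mathcal{A}}A$, for each $x\in X$ I may choose some $A(x)\in\mathcal{A}$ with $x\in A(x)$; this defines a map $X\to\mathcal{A}$, $x\mapsto A(x)$. The key step is to verify that this map is injective. Suppose $x,y\in X$ with $A(x)=A(y)=:A$. Then $x,y\in A$, and since $A$ is abelian we get $xy=yx$. Because $X$ is pairwise non-commuting, any two distinct elements of $X$ fail to commute; hence $x=y$. This establishes injectivity.

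Consequently $|X|\leq|\mathcal{A}|$ for every pairwise non-commuting subset $X$ of $G$, and therefore $\omega(G)=\sup_X|X|\leq|\mathcal{A}|$, as claimed.

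There is no real obstacle here: the only point requiring (minimal) care is the bookkeeping around the choice function $x\mapsto A(x)$ — in particular, noting that a given $x$ may lie in several members of $\mathcal{A}$, so one must pick one and argue that the resulting assignment is nonetheless injective on $X$. This is exactly the same device used in Section~\ref{sec:LowerBound} for the set $\mathcal{N}_n(q)$, except that there uniqueness of the containing centraliser is genuinely proved, whereas here an arbitrary choice suffices.
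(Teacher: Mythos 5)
Your argument is correct and is essentially the paper's own proof in slightly different clothing: the paper observes directly that $|S\cap A|\leq 1$ for each abelian $A\in\mathcal{A}$ and concludes $|S|\leq|\mathcal{A}|$ from the covering, which is exactly what your injective choice map $x\mapsto A(x)$ encodes. No gaps; nothing further is needed.
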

\begin{proof}
Let $S$ be a pairwise non-commuting set. Since $A\in \mathcal{A}$ is
abelian, we get $|S\cap A|\leq 1$. As 
$G=\cup_{A\in \mathcal{A}}A$, we obtain  $|S|\leq
|\mathcal{A}|$. Thus the result follows.
\end{proof}

Lemma~\ref{lemma:elementary} can be used effectively to
obtain upper bounds for $\omega(G)$. As an example we derive  Brown's
upper bound for $\omega(\Sym(n))$ mentioned in Subsetion~\ref{sec:sub},
see~\cite[Theorem~1~(1)]{Brown}.

\begin{proposition}\label{Symn:elementary}
There exists a constant $b$, which does not depend on $n$,
such that $\omega(\Sym(n))\leq b(\log\log n)(n-2)!$.
\end{proposition}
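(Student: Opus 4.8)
The plan is to apply Lemma~\ref{lemma:elementary} with a carefully chosen collection $\mathcal{A}$ of abelian subgroups of $\Sym(n)$. The natural candidates for abelian subgroups here are the centralisers of elements, or more precisely, since we want something small and explicit, the cyclic subgroups generated by certain well-chosen permutations together with their centralisers. First I would recall the classical description (going back to Landau, and made precise in work on the maximal order of elements of $\Sym(n)$) of the structure of a permutation whose cycle type is chosen to maximise, say, the order or to minimise the number of distinct centralisers needed to cover $\Sym(n)$. The key combinatorial fact to invoke is that every permutation $g \in \Sym(n)$ lies in the centraliser $C_{\Sym(n)}(h)$ of some permutation $h$ whose cycle lengths are pairwise coprime prime powers (one replaces each cycle of $g$ by a power so that a suitable regular element is centralised); such centralisers are abelian, and there are relatively few of them.

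Concretely, the second step is to count, or bound from above, the number of abelian subgroups needed. One shows that $\Sym(n)$ is covered by the centralisers $C_{\Sym(n)}(h)$ as $h$ ranges over permutations supported on at most $n-2$ points whose nontrivial cycle lengths form an antichain of prime powers; the number of such centralisers is governed by the number of ways of writing integers up to $n$ as sums of distinct prime powers, which by standard analytic estimates (essentially the prime number theorem, giving $\pi(n)\sim n/\log n$, combined with a partition-type count) is of the order $(n-2)! \cdot (\log\log n)$ up to a constant — the $(n-2)!$ factor being the size of a generic such centraliser (a cyclic group of order roughly $n$ times a small symmetric group acting on the remaining $\le 2$ points is too crude; rather one gets a product of cyclic groups whose orders multiply to something comparable to the number of points moved, leaving an index comparable to $(n-2)!$ in $|\Sym(n)|=n!$). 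The $\log\log n$ comes precisely from the number of distinct prime-power cycle lengths one can fit below $n$.

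The hard part, and the step I expect to be the main obstacle, is the sharp bookkeeping that produces the exact exponent on $\log\log n$ rather than, say, $\log n$: one must argue that it suffices to use centralisers of elements of a very restricted cycle type, and that the number of such types, weighted by the size of the corresponding centraliser, does not exceed $b(\log\log n)(n-2)!$. This requires the number-theoretic input that the sum $\sum_{p^a \le n} 1$ is $O(n/\log n)$ together with a delicate choice ensuring that the "wasted" points (those not in long cycles) number at most $2$, so that the centraliser has size at least $(n-2)!^{-1}|\Sym(n)|$ — in other words, one needs the covering family to consist of genuinely large abelian subgroups. I would organise this as: (i) a covering lemma reducing to prime-power-cycle centralisers; (ii) a size lemma showing each has order $\ge |\Sym(n)|/((n-2)! \cdot \text{const})$; (iii) a counting lemma bounding the number of such centralisers by $O((\log\log n)\cdot (\text{const}))$; and then Lemma~\ref{lemma:elementary} gives $\omega(\Sym(n)) \le |\mathcal{A}| \le b(\log\log n)(n-2)!$. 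The analytic estimates in (iii) are routine once the combinatorial setup in (i) is pinned down, but getting (i) to be tight enough to feed into (ii) is where the real work lies, and one should expect to cite Brown~\cite{Brown} for the precise form rather than reproving every estimate here.
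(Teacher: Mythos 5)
Your overall strategy (cover $\Sym(n)$ by abelian subgroups and apply Lemma~\ref{lemma:elementary}) is the right one, but the execution has genuine gaps. The paper's proof is much simpler than what you outline: it takes $\mathcal{A}$ to be the set of \emph{all} maximal abelian subgroups of $\Sym(n)$ (every element lies in at least one, so the covering is automatic) and then quotes Dixon's theorem (\cite[Theorem~2]{Dixon}) that the number of maximal abelian subgroups of $\Sym(n)$ is at most $b(\log\log n)(n-2)!$; Lemma~\ref{lemma:elementary} then finishes immediately. No choice of special cycle types, no size estimates, and no analytic number theory are needed, and crucially the reference is to Dixon's count, not to Brown -- your closing suggestion to ``cite Brown for the precise form'' is essentially citing the statement you are asked to prove.

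Beyond that, your bookkeeping is internally inconsistent and reflects a misreading of what Lemma~\ref{lemma:elementary} gives. The lemma bounds $\omega(\Sym(n))$ by the \emph{number} of abelian subgroups in the covering family; the sizes of those subgroups are irrelevant, so your step (ii) (each centraliser has order at least $|\Sym(n)|/((n-2)!\cdot\mathrm{const})$) buys nothing -- and is false anyway, since abelian subgroups of $\Sym(n)$ have order at most roughly $3^{n/3}$, far smaller than $n!/(n-2)!\cdot$ anything only in appearance but in any case not what the argument needs. Your step (iii) then claims the number of centralisers is $O(\log\log n)$, which contradicts your own final inequality $|\mathcal{A}|\leq b(\log\log n)(n-2)!$ (the covering family really must have on the order of $(\log\log n)(n-2)!$ members; a family of size $O(\log\log n)$ of abelian subgroups cannot cover $\Sym(n)$). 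Finally, the covering claim in step (i) is unsubstantiated: a centraliser $C_{\Sym(n)}(h)$ is abelian only when the cycle lengths of $h$ are pairwise distinct, and you give no argument that every $g\in\Sym(n)$ lies in such a centraliser with the restricted cycle types you describe, nor any count of how many such centralisers are required. As written the proposal does not yield the stated bound.
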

\begin{proof}
By~\cite[Theorem~$2$]{Dixon}, the number of maximal abelian subgroups
of $\Sym(n)$ is at most $b(\log\log n)(n-2)!$, for some constant $b$
not depending on $n$. Thus the proposition follows from
Lemma~\ref{lemma:elementary}. 
\end{proof}

Unfortunately, there is no natural description (as in $\Sym(n)$) for  the
maximal abelian subgroups  of $\GL_n(q)$. So, it looks particularly
difficult to give an upper 
bound for the number of all maximal abelian subgroups of
$\GL_n(q)$. (For some results on the number of maximal abelian
subgroups with trivial unipotent radical in Chevalley groups, we refer
the reader to~\cite{Vdovin}.) We overcome this difficulty by focusing
only on the  subfamily $\mathcal{A}_n(q)$ of abelian subgroups defined
in Definition~\ref{def:An} which is large
enough to cover all the group elements as will be proved in
Proposition~\ref{prop:coveringproperty}.  
This leads to an upper bound for
$\omega(\GL_n(q))$. For $q>n$, we construct a
pairwise non-commuting set of size $|\mathcal{A}_n(q)|$ and so we obtain
an explicit formula for $\omega(\GL_n(q))$.

\begin{remark}\label{rm:0}{\rm We note that, for general $q$ and $n$,
  centralisers of cyclic matrices 
  do not cover all the elements in $\GL_n(q)$. Here we simply give an
  example for $\GL_4(2)$. Consider the matrix
\[
x=\left(
\begin{array}{cccc}
1&0&0&0\\
0&1&1&0\\
0&0&1&1\\
0&0&0&1\\
\end{array}
\right).
\]
With an easy computation, it is easy to check that 
\[
C=C_{\GL_4(2)}(x)=\left\{
\left(
\begin{array}{cccc}
1&0&0&a\\
b&1&c&d\\
0&0&1&c\\
0&0&0&1\\
\end{array}
\right)\mid a,b,c,d\in \mathbb{F}_2
\right\}
\]
and that $C$ has order $16$. In particular, $C$ consists of unipotent
elements. A unipotent matrix $u$ is cyclic if and only if $u$ has
minimum polynomial $(t-1)^4$, that is, $u-1$
has rank $3$. Now, it is easy to see that if $c\in C$, then $c-1$ has
rank at most $2$. Therefore, $C$ contains no cyclic matrix and
hence $x$ is not contained in the centraliser of a cyclic matrix. 

A similar example can be constructed for every $q>2$. Namely, consider
the matrix  
\[
x=\left(
\begin{array}{cc}
D&0\\
0&U\\
\end{array}
\right)
\]
in $\GL_{2q-1}(q)$, where $D$ is a $(q-1)\times (q-1)$-diagonal
matrix with distinct eigenvalues and $U$ is a $(q\times q)$-cyclic matrix with
minimum polynomial $(t-1)^q$ (that is, a regular unipotent element of
$\GL_q(q)$). It is possible to show that $x$ is not contained in the
centralizer of a cyclic matrix of $\GL_{2q-1}(q)$.}
\end{remark}

\section{Conjugacy classes and centralisers in $\GL_n(q)$}\label{sec:notation}

In this section, we introduce some notation and some well-known
results that are going to be used throughout the rest of the paper.

Let $k$ be a field with $q$ elements, $V$ be $k^n$ and
$k[t]$ be the polynomial 
ring with coefficients in $k$. Now, each element $g$ of 
$\GL_{n}(q)$ acts on the vector space $V$ and hence defines
on $V$ a  $k[t]$-module structure by setting $tv=gv$. We denote
this $k[t]$-module by $V_g$. For instance, it is easy to see that  $g$
is a cyclic matrix if and 
 only $V_g$ is a cyclic $k[t]$-module.
Clearly, any two elements $g,h$ of $\GL_{n}
(q)$ are conjugate if and only if $V_g$ and $V_h$ are isomorphic
$k[t]$-modules. 

For each element $g$ of $\GL_n(q)$, there exist unique
$s,u\in \GL_n (q)$ 
such that $g=su=us$, where $s$ is \emph{semisimple} and $u$ is
\emph{unipotent} (see~\cite[Section~$1.4$]{Carter}). We 
call $s$ (respectively $u$) the semisimple (respectively unipotent)
part of $g$. 

A unipotent element $u$ of $\GL_n(q)$ is said to be a
\emph{regular unipotent}  element if $u-1$ has rank $n-1$, that is  $u$
has minimum polynomial $(t-1)^n$ and so $u$ is a cyclic matrix. In
particular,  regular unipotent 
elements of $\GL_n(q)$ form a $\GL_n(q)$-conjugacy class. In the following
lemma, we collect some well-known information on the centraliser and
normaliser of a regular unipotent element. 

\begin{lemma}\label{lemma:regularunipotent}
Let $u$ be a regular unipotent element of $\GL_n(q)$, for $n\geq
2$. The group  $C=C_{\GL_n(q)}(u)$ is abelian of order $(1-q^{-1})q^n$ and
$N_{\GL_{n}(q)}(C)$ has order $(1-q^{-1})^2q^{2n-1}$.
\end{lemma}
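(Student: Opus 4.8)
The plan is to compute $C = C_{\GL_n(q)}(u)$ explicitly by working in the $k[t]$-module language of Section~\ref{sec:notation}. Since $u$ is regular unipotent, $V_u$ is a cyclic $k[t]$-module isomorphic to $k[t]/((t-1)^n)$, equivalently $k[s]/(s^n)$ where $s = t-1$. First I would identify $C$ with the group of units of the endomorphism ring $\End_{k[t]}(V_u)$. Because $V_u$ is cyclic, $\End_{k[t]}(V_u) \cong k[t]/((t-1)^n) \cong k[s]/(s^n)$, a local ring of order $q^n$ with maximal ideal $(s)$ of order $q^{n-1}$ and residue field $k$. Its unit group therefore has order $q^n - q^{n-1} = (1-q^{-1})q^n$; this is $|C|$, and $C$ is abelian since $k[s]/(s^n)$ is commutative. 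This settles the first assertion.

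For the normaliser, the key step is to understand which elements of $\GL_n(q)$ normalise $C$. I would argue that $N := N_{\GL_n(q)}(C)$ coincides with the normaliser of the cyclic subalgebra $R := k[u] = k[s]/(s^n)$ inside $\End_k(V) = M_n(q)$: indeed $C = R^\times$ generates $R$ as a $k$-algebra (since $1, u, u^2, \dots, u^{n-1}$ are linearly independent and lie in $C$ once we note $u^i \in C$), so $g$ normalises $C$ iff $g R g^{-1} = R$. Now $gRg^{-1}$ is again a cyclic subalgebra, generated by $gug^{-1}$, which is regular unipotent; and $R = gRg^{-1}$ forces $gug^{-1}$ to be a regular unipotent element of $R$. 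The regular unipotent elements of the local ring $R = k[s]/(s^n)$ are exactly the elements of the form $1 + \lambda_1 s + \lambda_2 s^2 + \cdots$ with $\lambda_1 \neq 0$ — there are $(q-1)q^{n-2}$ of them — and these form a single coset structure; conjugation within $\GL(V)$ acts on this set, and I would show the action of $N$ on the regular unipotent elements of $R$ is transitive with point stabiliser $C$ (the centraliser of $u$ itself inside $N$). This gives $|N| = |C| \cdot (q-1)q^{n-2} = (1-q^{-1})q^n \cdot (1-q^{-1})q^{n-1}\cdot q^{-1}\cdot q = (1-q^{-1})^2 q^{2n-1}$, after collecting powers of $q$.

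The transitivity claim is the main obstacle and deserves the most care. One clean way to establish it: given two regular unipotent generators $u, u' = 1 + \lambda_1 s + \cdots$ of $R$, both $V_u$ and $V_{u'}$ present $V$ as the cyclic module $k[t]/((t-1)^n)$ via the single generator vector that is not killed by $(u-1)^{n-1}$, respectively $(u'-1)^{n-1}$; a change-of-basis matrix $g$ taking one cyclic basis to the other then satisfies $gug^{-1} = u'$, and since $u, u'$ generate the same algebra $R$ we get $gRg^{-1} = R$, i.e. $g \in N$. Hence $N$ acts transitively on the regular unipotent elements of $R$. For the stabiliser computation, the stabiliser in $N$ of $u$ is $C_{\GL_n(q)}(u) = C$, which is consistent since $C \le N$ (an abelian group normalises each of its subgroups, in particular $C$ normalises $C$). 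Putting the orbit–stabiliser count together with the count $(q-1)q^{n-2}$ of regular unipotents in $R$ — which itself follows from the description of $R^\times$ and the condition $\lambda_1 \neq 0$ — yields the stated order of $N_{\GL_n(q)}(C)$.
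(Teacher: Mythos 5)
Your proposal is correct and follows essentially the same route as the paper: both compute $|C|$ by identifying the centraliser with the unit group of $\End_{k[t]}(V_u)\cong k[t]/((t-1)^n)$, and both obtain $|N_{\GL_n(q)}(C)|$ from the count $(q-1)q^{n-2}=(1-q^{-1})q^{n-1}$ of regular unipotent elements $1+\lambda_1(u-1)+\cdots$ with $\lambda_1\neq 0$ inside $C$, using that regular unipotents form a single $\GL_n(q)$-conjugacy class. The only (minor) difference is how the index $|N:C|$ is extracted: you prove directly that $N$ acts transitively on the regular unipotents of $R=k[u]$ (via the neat observation that any $g$ with $gug^{-1}=u'$ automatically normalises $R$, hence $C$) and apply orbit–stabiliser, whereas the paper reaches the same ratio by counting the regular unipotent class against the conjugates of $C$; both arguments are sound.
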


\begin{proof}Set $v=u-1$. Since $u$ is a regular unipotent element,
  the element $v$ is a nilpotent matrix of rank $n-1$ with minimal
  polynomial $t^{n}$. Also, $C_{\GL_n(q)}(u)=C_{\GL_n(q)}(v)$. The
  elements centralizing $v$ are the
  isomorphisms of the  
  $k[t]$-module $V_v$. Since $V_v\cong k[t]/(t^n)$ is a
  uniserial module,
  it is readily seen (see for example~\cite[page~$265$]{NP}) that
  $\mathrm{End}_{k[t]}(V_v)$ is a polynomial ring in $v$ isomorphic to $k[t]/(t^n)$. Therefore,
  $\mathrm{End}_{k[t]}(V_v)=\langle 1,v,\ldots,v^{n-1}\rangle$ is abelian. Since the ideals of
  $k[t]/(t^n)$ are in one-to-one correspondence with the ideals of
  $k[t]$ that contain $(t^n)$ and $(t)$ is the unique maximal ideal
  containing $(t^n)$, it follows that
  $\mathrm{End}_{k[t]}(V_v)$ is a local ring with maximal ideal
  $(v)=\langle v,\ldots,v^{n-1}\rangle$ and every element of $(v)$ is nilpotent. In particular, the
  element  $x=\sum_{i=0}^{n-1}a_iv^i$ of $\mathrm{End}_{k[t]}(V_v)$ is
  invertible if and only if $x\notin (v)$, that is $a_0\neq 0$. This shows that
  $C$ is abelian of order $q^n-q^{n-1}=(1-q^{-1})q^n$.  

Let $x=\sum_{i=0}^{n-1}a_iv^i$ be in $\mathrm{End}_{k[t]}(V_v)$.  We
claim that $x$ is a regular unipotent element if and only if $a_0=1$
and $a_1\neq 0$. Assume first that $x$ is a regular unipotent
element. Thus $x-1$ is a nilpotent element with minimum polynomial
$t^n$. Now, $x-1$ is nilpotent if and only if $a_0-1=0$, that is
$a_0=1$. Also, as $(v^2)^{m}=0$ for every $m\geq n/2$, we obtain that
 $x-1$ is not a multiple of $v^2$, that is $a_1\neq 0$. Conversely, assume
that $a_0=1$ and $a_1\neq 0$. In particular, $x-1=vy$, where by the
previous paragraph $y$ is an invertible element of
$\mathrm{End}_{k[t]}(V_v)$. So, $(x-1)^{n-1}=v^{n-1}y^{n-1}\neq 0$ and
$x-1$ has minimum polynomial $v^{n-1}$. Thus $x$ is a regular
unipotent element.  This yields that 
  $C$ contains $q^{n-1}-q^{n-2}=(1-q^{-1})q^{n-1}$ regular unipotent
  elements. Since the regular
  unipotent elements form a $\GL_n(q)$-conjugacy class, $C$
  contains $(1-q^{-1})q^{n-1}$ regular unipotent elements and
  $C=C_{\GL_n(q)}(u')$ for each regular unipotent element $u'\in C$, we 
have that $|N_{\GL_n(q)}(C)|/|C|=(1-q^{-1})q^{n-1}$ and
$|N_{\GL_n(q)}(C)|=(1-q^{-1})^{2}q^{2n-1}$.  
\end{proof}

Let $d,m\geq 1$ be integers such that  $n=dm$ and  $E$ be a field
extension  over 
$k$ of degree $d$. As $E$ is a $k$-vector space of dimension $d$ and
$d$ divides $n$, we have that $k^n$ is isomorphic to $E^{m}$ as
$k$-vector spaces. Under
this isomorphism, the group 
$\GL_{m}(q^d)$ embeds  into a subgroup of $\GL_{n}(q)$, which we
still denote by $\GL_{m}(q^d)$. This does not cause any confusion
because all fields of order $q^d$ are isomorphic, and therefore different
embeddings give rise to subgroups which are conjugate.

We recall that, given a group $G$,  a $G$-module $V$ is said to be
\emph{indecomposable} if 
$V\neq 0$ and if it is impossible to express $V$ as a direct sum of
two non-trivial $G$-submodules.  In the next lemma we determine the
elements $g$ of $\GL_n(q)$ such that $V_g$ is indecomposable.

\begin{lemma}\label{lemma:indecomposable}
Let $g$ be in $\GL_n(q)$ such
that $V_g$ is an indecomposable $k[t]$-module, where $g$ has
semisimple part $s$ and unipotent part $u$. Then $g$ is a cyclic
matrix with minimum
polynomial $f^m$, for some irreducible polynomial $f$ of degree $d$
with $dm=n$. Replacing $g$ by a conjugate if
necessary, $g\in \GL_{m}(q^d)$,
the element $s$ is a scalar matrix of  $\GL_{m}(q^d)$ corresponding to
a generator of $\mathbb{F}_{q^d}$ and the element
$u$ is a regular  unipotent element of $\GL_{m}(q^d)$. In particular,
$C_{\GL_n(q)}(g)$ is abelian of order $(1-q^{-d})q^{dm}$.  
\end{lemma}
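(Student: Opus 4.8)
The plan is to understand the $k[t]$-module structure of $V_g$ via the standard primary decomposition and Jordan form over $k$, and to identify exactly when indecomposability forces the rigid shape claimed. First I would recall that $k[t]$ is a principal ideal domain, so $V_g$, being a finitely generated torsion $k[t]$-module, decomposes as a direct sum of cyclic modules $k[t]/(f_i^{m_i})$ with $f_i$ irreducible. Indecomposability therefore immediately forces $V_g\cong k[t]/(f^m)$ for a single irreducible $f$; in particular $V_g$ is a cyclic $k[t]$-module, so $g$ is a cyclic matrix, and writing $d=\deg f$ we get $n=\dim_k V_g=dm$, and the minimum polynomial (which equals the characteristic polynomial) is $f^m$.

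Next I would pin down the semisimple and unipotent parts. The ring $R=k[t]/(f^m)$ is local with maximal ideal $(f)$ and residue field $E=k[t]/(f)\cong\mathbb{F}_{q^d}$. The image $\bar t$ of $t$ in $R$ is a unit (since $g\in\GL_n(q)$, $f\neq t$), and its image in the residue field $E$ generates $E^\times$ precisely when... actually one only needs that $\bar t$ generates $E$ as a field, equivalently $f$ is the minimal polynomial of a generator of $E$ over $k$, which holds by irreducibility of $f$. I would then invoke the uniqueness of the Jordan decomposition $g=su=us$: one can construct a semisimple element $s$ and unipotent $u$ inside the abelian ring $R$ using Hensel's lemma / the fact that $R\to E$ has nilpotent kernel, so the multiplicative group $R^\times$ splits as $(1+(f))\times \mu$ with $1+(f)$ a $p$-group (unipotent part) and a complement mapping isomorphically onto $E^\times$ (semisimple part). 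Identifying $V_g\cong E^m$ as a $k$-space realizes $R$ inside $\End_k(E^m)$, and under this identification $s$ becomes the scalar $\bar t\bmod (f)\in E^\times\subseteq\GL_m(q^d)$, a generator of $\mathbb{F}_{q^d}$, while $u=s^{-1}g$ lies in $1+(f)$; since $g$ is cyclic with minimum polynomial $f^m$, one checks $u$ has minimum polynomial $(t-1)^m$ over $E$, i.e. $u$ is a regular unipotent element of $\GL_m(q^d)$.

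Finally, for the centraliser: $C_{\GL_n(q)}(g)$ consists exactly of the $k[t]$-module automorphisms of $V_g\cong k[t]/(f^m)$, which is the unit group $R^\times$ of the local ring $R$ of size $q^{dm}$ with maximal ideal of size $q^{dm-d}$; hence $|C_{\GL_n(q)}(g)|=q^{dm}-q^{dm-d}=(1-q^{-d})q^{dm}$, and $R^\times$ is abelian as $R$ is commutative. Alternatively this is exactly Lemma~\ref{lemma:regularunipotent} applied to the regular unipotent $u\in\GL_m(q^d)$ together with $C_{\GL_n(q)}(g)=C_{\GL_m(q^d)}(u)$ (the extra semisimple scalar $s$ being central in $\GL_m(q^d)$ and not enlarging the centraliser). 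I expect the only mildly delicate point to be the clean identification of the abstract Jordan decomposition inside $R^\times$ with the concrete scalar-times-regular-unipotent description in $\GL_m(q^d)$ — in particular verifying that $u$ is genuinely \emph{regular} unipotent (minimum polynomial of full degree $m$) rather than merely unipotent — but this follows from the cyclicity of $V_g$ over $k[t]$, hence over $E[t]$ after the change of rings, so no real obstacle remains.
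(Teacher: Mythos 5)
Your argument is correct, but it takes a genuinely different route from the paper. You share the first step (over the PID $k[t]$, indecomposability forces $V_g\cong k[t]/(f^m)$, whence $g$ is cyclic with minimum polynomial $f^m$ and $n=dm$), but from there the paper conjugates $g$ to the explicit block matrix $J_m(f)$ with companion blocks $J(f)$ on the diagonal and $I_d$ above it (citing Macdonald for its minimum polynomial), reads off $s$ and $u$ from the block shape, identifies $C_{\GL_d(q)}(J(f))$ with $\mathbb{F}_{q^d}^\times$, and then quotes Lemma~\ref{lemma:regularunipotent} for the centraliser; you instead work intrinsically in the local ring $R=k[t]/(f^m)\cong\End_{k[t]}(V_g)$, obtain the Jordan decomposition from the coprime-order splitting $R^\times=(1+(f))\times\mu$ with $\mu\cong\mathbb{F}_{q^d}^\times$, use $k[s]\cong\mathbb{F}_{q^d}$ to get the embedding into $\GL_m(q^d)$ with $s$ scalar, and count $|C_{\GL_n(q)}(g)|=|R^\times|=q^{dm}-q^{dm-d}$ directly. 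Your version buys a coordinate-free treatment, a centraliser count that needs no appeal to Lemma~\ref{lemma:regularunipotent} (so $m=1$ is not a separate case), and it sidesteps any fuss about whether the displayed block matrices really are the Jordan components; the paper's version buys concreteness and a one-line citation in place of your Hensel/Teichm\"uller splitting. Two small points you should write out rather than leave as ``one checks'': (i) that $s$ is annihilated by $f$ (its image mod $(f)$ is $\bar t$, so $f(s)$ is both nilpotent and semisimple, hence $0$), which is what makes $k[s]$ a field of size $q^d$ and $s$ a scalar corresponding to a field generator of $\mathbb{F}_{q^d}$ (the intended meaning of ``generator'' here); and (ii) the regularity of $u$, e.g.\ via $E[u]\supseteq k[su]=k[g]=R$, so $E[u]=R$ has $E$-dimension $m$ and the unipotent $u$ has minimum polynomial $(t-1)^m$ over $E$ (equivalently, $g-s\notin(f^2)$ since $f(g)\in(f)\setminus(f^2)$ for $m\geq 2$). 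With those two checks included, your proof is complete.
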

\begin{proof}
Since $k[t]$ is a principal ideal domain, we have that the
$k[t]$-module $V_g$ is a direct sum of cyclic modules of the form
$k[t]/(f^m)$, where $f$ is a monic irreducible polynomial of $k[t]$ and
$m\geq 1$. As $V_g$ is indecomposable, we obtain that $V_g\cong
k[t]/(f^m)$, for some irreducible polynomial
$f=t^d-\sum_{i=1}^{d}a_it^{i-1}$ of degree $d$ and
$n=dm$. Let $J(f)$ denote the companion matrix for the polynomial $f$
\[
J(f)=\left(
\begin{array}{ccccc}
0     &1  &0&\cdots &0\\
0     &0  &1&\cdots &0\\
\cdots&   & &       &\cdots\\
0     &0  &0&\cdots &1\\
a_1   &a_2& &\cdots &a_d\\
\end{array}
\right)
\]
and let 
\[
J_m(f)=\left(
\begin{array}{ccccc}
J(f)&I_d   &0  &\cdots&0 \\
0     &J(f)&I_d&\ddots&\vdots\\
\vdots & \ddots   &\ddots   &\ddots      &\\
\vdots     &    &\ddots   &J(f)&I_d\\
0     &\cdots  &\cdots& 0 & J(f)
\end{array}
\right)
\]
with $m$ diagonal blocks $J(f)$. By construction the characteristic
polynomial of the block matrix $J_m(f)$ equals
$f^m$. Also,~\cite[Example~$1$, page~$140$]{macdonald} shows that $f^m$
is the minimum polynomial of $J_m(f)$. Therefore $J_m(f)$ is a cyclic
matrix. As $V_g$ and $V_{J_m(f)}$ are $k[t]$-modules isomorphic to
$k[t]/(f^m)$, we obtain that $g$ is
conjugate to $J_m(f)$ and so $g$ is a cyclic matrix with minimum
polynomial $f^m$. Thus we may assume that
$g=J_m(f)$. So, $s$ is obtained from $J_m(f)$ by replacing
the $d\times d$-identity matrix $I_d$ with the $d\times d$-zero matrix
$0$. Similarly, $u$ is obtained from $J_m(f)$ by replacying
the $d\times d$-matrix $J(f)$ with the $d\times d$-identity matrix
$I_d$.  

Now, the centraliser of the cyclic matrix $J(f)$ in the
algebra of $d\times d$-matrices over $k$ is a polynomial
algebra isomorphic to $k[t]/(f)$. Since $f$ is irreducible, $k[t]/(f)$
is a field of size $q^d$. Hence $C_{\GL_d(q)}(J(q))$ is a cyclic group of order
$q^d-1$ isomorphic to the multiplicative group of a field of size
$q^d$ and, since $f$ is irreducible, $J(f)$ corresponds to a generator
in this field.  Under this 
identification, $J_m(f)$ is an element of $\GL_m(q^d)$, $s$ 
is a scalar matrix correspoding to a generator of $\mathbb{F}_{q^d}$
and $u$ is a regular unipotent matrix. The rest of 
the lemma follows from Lemma~\ref{lemma:regularunipotent}. 
\end{proof}

\begin{corollary}\label{cor:cortolemma}
Let $g_1$ and $g_2$ be in $\GL_n(q)$ such that $V_{g_1}$ and $V_{g_2}$ are
indecomposable $k[t]$-modules. Set $C_{g_i}=C_{\GL_n(q)}(g_i)$, for
$i=1,2$. The following are equivalent:
\begin{description}
\item[$(i)$]$C_{g_1}$ is conjugate to $C_{g_2}$;
\item[$(ii)$]$g_1$ and $g_2$ have minimum polynomials $f_{g_1}^m$
  and $f_{g_2}^m$, for some irreducible polynomials $f_{g_1},f_{g_2}$
  of degree $d$ with $dm=n$; 
\item[$(iii)$]the $C_{g_1}$-module $V$ is isomorphic to the
  $C_{g_2}$-module $V$.
\end{description}
\end{corollary}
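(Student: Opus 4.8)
The plan is to prove the cycle $(i)\Rightarrow(ii)\Rightarrow(i)$ together with the (essentially formal) equivalence $(i)\Leftrightarrow(iii)$, taking Lemma~\ref{lemma:indecomposable} as the main input. From that lemma I record, for $i=1,2$, that $g_i$ is a cyclic matrix with minimum polynomial $f_{g_i}^{m_i}$, where $d_i=\deg f_{g_i}$ and $d_im_i=n$, and that $C_{g_i}$ is abelian of order $(1-q^{-d_i})q^n=q^n-q^{n-d_i}$.

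For $(i)\Leftrightarrow(iii)$: an isomorphism of the $C_{g_1}$-module $V$ onto the $C_{g_2}$-module $V$ consists of a $k$-linear bijection $\psi\colon V\to V$ and a group isomorphism $\phi\colon C_{g_1}\to C_{g_2}$ with $\psi(cv)=\phi(c)\psi(v)$ for all $c\in C_{g_1}$ and $v\in V$; this forces $\psi c\psi^{-1}=\phi(c)$, hence $\psi C_{g_1}\psi^{-1}=C_{g_2}$, and conversely a conjugating element realises such a pair. So $(iii)$ holds exactly when $C_{g_1}$ and $C_{g_2}$ are conjugate in $\GL_n(q)=\GL(V)$, i.e.\ exactly when $(i)$ holds. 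For $(i)\Rightarrow(ii)$: conjugate subgroups have equal order, so $q^n-q^{n-d_1}=q^n-q^{n-d_2}$; since $d\mapsto q^n-q^{n-d}$ is strictly increasing, $d_1=d_2=:d$ and then $m_1=n/d=m_2=:m$, which is~$(ii)$.

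The substantive step is $(ii)\Rightarrow(i)$. Assuming $(ii)$, Lemma~\ref{lemma:indecomposable} lets me replace each $g_i$ by a conjugate lying in a copy of $\GL_m(q^d)$ inside $\GL_n(q)$, with semisimple part $s_i$ a scalar of $\GL_m(q^d)$ corresponding to a generator of $\mathbb{F}_{q^d}$ and unipotent part $u_i$ a regular unipotent element of $\GL_m(q^d)$. Since the subfields of the algebra of $n\times n$ matrices over $k$ generated by $s_1$ and by $s_2$ are both isomorphic to $\mathbb{F}_{q^d}$, the two resulting copies of $\GL_m(q^d)$ are $\GL_n(q)$-conjugate (as noted before Lemma~\ref{lemma:indecomposable}); after one more conjugation I may therefore assume $g_1$ and $g_2$ lie in the \emph{same} subgroup $\GL_m(q^d)$. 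Now $s_i$ and $u_i$ are polynomials in $g_i$ while $g_i=s_iu_i$, so $C_{\GL_n(q)}(g_i)=C_{\GL_n(q)}(s_i)\cap C_{\GL_n(q)}(u_i)$; moreover $C_{\GL_n(q)}(s_i)=\GL_m(q^d)$, since $s_i$ generates the scalar copy of $\mathbb{F}_{q^d}$ turning $k^n$ into an $\mathbb{F}_{q^d}$-space of dimension $m$. Hence $C_{g_i}=C_{\GL_m(q^d)}(u_i)$, and since $u_1$ and $u_2$ are regular unipotent elements of $\GL_m(q^d)$ they are conjugate there, so their centralisers are conjugate; thus $C_{g_1}$ and $C_{g_2}$ are conjugate in $\GL_n(q)$, proving~$(i)$.

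I expect the only delicate point to be the bookkeeping with the semisimple parts in $(ii)\Rightarrow(i)$: although $f_{g_1}$ and $f_{g_2}$ may be different irreducible polynomials, the copies of $\mathbb{F}_{q^d}$ they generate are conjugate, so the problem reduces to the conjugacy of regular unipotent elements of $\GL_m(q^d)$ and of their centralisers, combined with the identity $C_{\GL_n(q)}(g)=C_{\GL_n(q)}(s)\cap C_{\GL_n(q)}(u)$ for the semisimple--unipotent decomposition.
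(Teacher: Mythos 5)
Your proposal is correct and follows essentially the same route as the paper: compare centraliser orders from Lemma~\ref{lemma:indecomposable} to get $(i)\Rightarrow(ii)$, reduce to a common copy of $\GL_m(q^d)$ and use conjugacy of regular unipotent elements for $(ii)\Rightarrow(i)$, and identify a module isomorphism with a conjugating element for $(i)\Leftrightarrow(iii)$. The only difference is that you spell out the identity $C_{\GL_n(q)}(g_i)=C_{\GL_n(q)}(s_i)\cap C_{\GL_n(q)}(u_i)=C_{\GL_m(q^d)}(u_i)$, which the paper asserts without comment.
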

\begin{proof}
By Lemma~\ref{lemma:indecomposable}, $g_i$ is a cyclic matrix with
minimum polynomial   $f_{g_i}^{m_{g_i}}$, for some irreducible polynomial
$f_{g_i}$ of degree $d_{g_i}$  with $d_{g_i}m_{g_i}=n$ (for $i=1,2$).  
Assume $C_{g_1}$ is conjugate to $C_{g_2}$. 
By Lemma~\ref{lemma:indecomposable}, $|C_{g_1}|=(1-q^{-d_{g_1}})q^{d_{g_1}m_{g_1}}$
and $|C_{g_2}|=(1-q^{-d_{g_2}})q^{d_{g_2}m_{g_2}}$. As
$|C_{g_1}|=|C_{g_2}|$, we have $d_{g_1}=d_{g_2}$ and
$m_{g_1}=m_{g_2}$. Thus Part~$(i)$ implies 
Part~$(ii)$.

Assume Part~$(ii)$. Let $s_1$ and $s_2$ be the semisimple parts of
$g_1$ and $g_2$, respectively. Similarly, let $u_1$ and $u_2$ be the
unipotent parts 
of $g_1$ and $g_2$, respectively. By Lemma~\ref{lemma:indecomposable},
replacing $g_1$ and $g_2$ by a conjugate
if necessary, we may assume that $g_1,g_2\in \GL_m(q^d)$, $s_1$ and $s_2$
are scalar matrices corresponding to generators of
$\mathbb{F}_{q^d}$ and $u_1,u_2$ are regular unipotent elements of
$\GL_{m}(q^d)$. Therefore
$C_{g_i}=C_{\GL_n(q)}(g_i)=C_{\GL_m(q^d)}(g_i)=C_{\GL_m(q^d)}(u_i)$, 
for $i=1,2$. Since regular unipotent elements form a
$\GL_m(q^d)$-conjugacy class, we obtain that $u_1$ is conjugate to
$u_2$ in $\GL_m(q^d)$ and so $C_{g_1}=C_{\GL_m(q^d)}(u_1)$ is conjugate
to $C_{g_2}=C_{\GL_{m}(q^d)}(u_2)$ and Part~$(i)$ follows.

If $C_{g_1}$ is conjugate in $\GL_n(q)$ to $C_{g_2}$, then the
$C_{g_1}$-module $V$ is isomorphic to the $C_{g_2}$-module $V$. Thence
Part~$(i)$ implies Part~$(iii)$.

Conversely, if the $C_{g_1}$-module $V$ is isomorphic to the
$C_{g_2}$-module $V$, 
then there exists a group isomorphism $\varphi:C_{g_1}\to C_{g_2}$ and
a $k$-vector space isomorphism $\psi:V\to V$ such that
$(vg)\psi=(v\psi)(g^\varphi)$, for every $v\in V$ and $g\in C_{g_1}$. This yields
$g^\varphi=\psi^{-1}g\psi$, for every $g\in C_{g_1}$. Thence $C_{g_1}$ is conjugate to $C_{g_2}$
in $\GL_n(q)$ and Part~$(i)$ follows.
\end{proof}

The set of abelian subgroups 
$\{C_{\GL_n(q)}(g)\mid V_g\textrm{ indecomposable}\}$ of $\GL_n(q)$
plays a very important role in this paper. It is worth to point out
that, by Corollary~\ref{cor:cortolemma}, the conjugacy
classes in this  family of
subgroups are in one-to-one
correspondence with the ordered pairs of positive integers $(d,m)$ with
$n=dm$. We denote by 
\begin{equation}\label{def:A_dm}
\{A_{d,m}\}_{d,m}
\end{equation} 
a set of representatives
for these 
conjugacy classes. In particular, for $m=1$, the group $A_{d,1}$ is a
cyclic group, and actually $A_{d,1}$ is a maximal non-split torus of order
$q^d-1$ in  $\GL_d(q)$, usually called a Singer cycle.

\begin{lemma}\label{lemma:normalizer}
Let $d,m\geq 1$ be such that $n=dm$. The group $A_{d,m}$ is a maximal
abelian subgroup of $\GL_{n}(q)$ and
\[
|N_{\GL_n(q)}(A_{d,m})|=\left\{
\begin{array}{lcc}
d(1-q^{-d})^2q^{2dm-d}&&\textrm{if }m>1,\\
d(1-q^{-d})q^{d}&&\textrm{if }m=1.\\
\end{array}
\right.
\]
\end{lemma}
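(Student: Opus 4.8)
The plan is to work throughout inside $\GL_m(q^d)$, using Lemma~\ref{lemma:indecomposable} to reduce to the case where $A_{d,m}=C_{\GL_m(q^d)}(g)$ for $g$ with semisimple part a scalar $s$ corresponding to a generator of $\mathbb{F}_{q^d}$ and unipotent part a regular unipotent $u$ of $\GL_m(q^d)$. The first observation is that $A_{d,m}=C_{\GL_m(q^d)}(u)$: since $s$ is central in $\GL_m(q^d)$, centralising $g=su$ is the same as centralising $u$. For $m=1$ this group is the Singer cycle $A_{d,1}$ of order $q^d-1$, and for $m\geq 2$ Lemma~\ref{lemma:regularunipotent} (applied over the field of size $q^d$, with $n$ there replaced by $m$) gives $|A_{d,m}|=(1-q^{-d})q^{dm}$ and $|N_{\GL_m(q^d)}(A_{d,m})|=(1-q^{-d})^2q^{2dm-d}$.

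To compute the normaliser in the bigger group $\GL_n(q)$ I would use the characterisation of $A_{d,m}$ via the module structure of $V$. By Corollary~\ref{cor:cortolemma}, conjugating $A_{d,m}$ inside $\GL_n(q)$ produces exactly the centralisers $C_{\GL_n(q)}(g')$ with $V_{g'}$ indecomposable and $g'$ having minimum polynomial $f'^{\,m}$ for an irreducible $f'$ of degree $d$. So normalising $A_{d,m}$ means permuting among these, i.e. $N_{\GL_n(q)}(A_{d,m})/(A_{d,m}\cdot C)$-type arguments. The cleanest route: the normaliser $N_{\GL_n(q)}(A_{d,m})$ contains $N_{\GL_m(q^d)}(A_{d,m})$, and additionally contains the Galois group $\mathrm{Gal}(\mathbb{F}_{q^d}/\mathbb{F}_q)$ of order $d$ acting semilinearly (the field automorphisms normalise $\GL_m(q^d)$ and send $A_{d,m}$ to itself since the Galois conjugates of $f$ are again irreducible of degree $d$ with the same $m$). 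I would then argue this gives \emph{all} of the normaliser: if $x\in\GL_n(q)$ normalises $A_{d,m}$, then $x$ normalises $C_{\GL_n(q)}(A_{d,m})=\langle A_{d,m}, s\rangle$ — in fact $A_{d,m}$ determines the field $E=\mathbb{F}_{q^d}=\langle 1,s\rangle$ acting on $V$ up to the Galois action — so after multiplying $x$ by a suitable Galois element we may assume $x$ centralises $E$, i.e. $x\in\GL_m(q^d)$, and then $x\in N_{\GL_m(q^d)}(A_{d,m})$. Since $N_{\GL_m(q^d)}(A_{d,m})\cap \langle\text{Galois}\rangle=1$ (the Galois action is faithful on $E$, hence not inner), the orders multiply: $|N_{\GL_n(q)}(A_{d,m})|=d\cdot|N_{\GL_m(q^d)}(A_{d,m})|$, which gives the two displayed formulas.

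For maximality of $A_{d,m}$ as an abelian subgroup: suppose $B\supseteq A_{d,m}$ is abelian. Then $B\subseteq C_{\GL_n(q)}(A_{d,m})$. When $m\geq 2$, by Lemma~\ref{lemma:indecomposable} the endomorphism ring of $V_u$ is the local ring $\mathrm{End}_{k[t]}(V_g)$, and $A_{d,m}$ is precisely its unit group; any element of $\GL_n(q)$ centralising all of $A_{d,m}$ lies in this commutative ring (it must commute with $u$ and with $s$, hence with the algebra they generate, which acts irreducibly enough to force it into the ring), so $C_{\GL_n(q)}(A_{d,m})$ has the same unit group $A_{d,m}$, forcing $B=A_{d,m}$. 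When $m=1$, $A_{d,1}$ is a Singer cycle and $C_{\GL_d(q)}(A_{d,1})=A_{d,1}$ is classical. The main obstacle I anticipate is the clean verification that $C_{\GL_n(q)}(A_{d,m})$ is exactly $\langle A_{d,m},s\rangle$ (equivalently the commutative ring $E[u]\cong\mathbb{F}_{q^d}[t]/(t^m)$) and not something larger — this is where one must use indecomposability of $V_g$ carefully, essentially repeating the local-ring argument of Lemma~\ref{lemma:regularunipotent} but now over $\mathbb{F}_{q^d}$ and tracking that no extra $\mathbb{F}_q$-linear (non-$\mathbb{F}_{q^d}$-linear) maps can centralise $A_{d,m}$; the Galois count above is what accounts for exactly those.
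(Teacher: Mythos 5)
Your proposal is correct and follows essentially the same route as the paper: reduce to $\GL_m(q^d)$ via Lemma~\ref{lemma:indecomposable}, apply Lemma~\ref{lemma:regularunipotent} over the field of size $q^d$, and account for a factor $d$ coming from the Galois group of $\mathbb{F}_{q^d}/\mathbb{F}_q$ (the paper gets the passage to Galois automorphisms by observing that $\langle s\rangle$ is a normal Hall, hence characteristic, subgroup of $A_{d,m}$, so any normalising element acts on $k[s]$ as a field automorphism, giving $|N:N\cap\GL_m(q^d)|=d$). The ``obstacle'' you anticipate for maximality is not one: since $g=su\in A_{d,m}$ and $A_{d,m}=C_{\GL_n(q)}(g)$, any abelian overgroup of $A_{d,m}$ centralises $g$ and hence equals $A_{d,m}$, which is the paper's one-line argument.
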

\begin{proof}
By definition of $A_{d,m}$, there exists an element $g=su$  of
$A_{d,m}$ such that 
$A_{d,m}=C_{\GL_{n}(q)}(g)$, where $s$ is the semisimple part of $g$ and
$u$ is the unipotent part of $g$. By Lemma~\ref{lemma:indecomposable},
 we may choose $A_{d,m}$ such that $A_{d,m}\subseteq \GL_{m}(q^d)$ and
 we may assume that $s$ is a scalar matrix of $\GL_{m}(q^d)$ of order $q^d-1$. 

By Lemma~\ref{lemma:indecomposable}, $A_{d,m}$ is abelian. Let  $A$ be
an abelian subgroup of $\GL_{n}(q)$ containing $A_{d,m}$ and $x$ be
in $A$. Since $A$ is abelian, $x$ commutes with $g$ and so $x\in
A_{d,m}$. This yields that $A_{d,m}$ is a maximal abelian subgroup of
$\GL_{n}(q)$.  

Let $N$ be the normaliser in $\GL_{n}(q)$ of $A_{d,m}$ and  $x$ be in
$N$. Since $\langle s\rangle$ is a normal Hall subgroup of $A_{d,m}$,
we get that $x$ normalises $\langle s\rangle$.  So $x$ normalises the
subgroup of
scalar matrices of $\GL_{m}(q^d)$. Thence, $x$ 
acts as a Galois automorphism on the field $k[s]$ of order $q^d$. This
shows that 
$|N:N\cap \GL_{m}(q^d)|=d$. If $m=1$, then $\GL_{1}(q^d)=A_{d,1}$
and $|N|=d(q^d-1)$. If $m>1$, 
then  Lemma~\ref{lemma:regularunipotent} yields that $N\cap
\GL_{m}(q^d)$ has order $(1-q^{-d})^2q^{2dm-d}$.  
\end{proof}

\section{The family $\mathcal{A}_n(q)$ and the upper bound for
  $\omega(\GL_n(q))$}\label{sec:thefamily}

Finally, we are ready to study the family $\mathcal{A}_n(q)$ of abelian
subgroups of $\GL_n(q)$ (given in Definition~\ref{def:An})
necessary in order to obtain an upper bound on the size of
$\omega(\GL_n(q))$. For convenience, we state 
Definition~\ref{def:An} again. 

\noindent\textbf{Definition~\ref{def:An}. }
Let $\mathcal{A}_n(q)$ be the set of abelian subgroups $A$ of $\GL_n(q)$
such that the $A$-module $V$ has a decomposition $V_1\oplus\cdots
  \oplus V_r$ into indecomposable $A$-modules satisfying the following
  properties: 
\begin{description}
\item[$(i)$]$A=A_1\times \cdots\times A_r$, where $A_i\subseteq \GL
  (V_i)$;
\item[$(ii)$]for $i=1,\ldots,r$, we have $A_i=C_{\GL
  (V_i)}(a_i)$, for some element $a_i\in \GL (V_i)$ such
  that $(V_i)_{a_i}$ is an 
  indecomposable $k[t]$-module.
\end{description}
We show in Proposition~\ref{prop:numberA_n} that, for $q>2$,  the
elements of $\mathcal{A}_n(q)$ are maximal abelian subgroup of
$\GL_n(q)$.
From the definition of $\mathcal{A}_n(q)$ we get at once
Theorem~\ref{thm:sumupthms}~$(a)$. 
\begin{proposition}\label{prop:coveringproperty}
$\GL_n(q)=\cup_{A\in \mathcal{A}_n(q)}A$. 
\end{proposition}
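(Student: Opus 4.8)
The plan is to show that every $g \in \GL_n(q)$ lies in some $A \in \mathcal{A}_n(q)$, using the rational canonical form to produce the required decomposition of $V$. First I would fix $g$ and consider the $k[t]$-module $V_g$. Since $k[t]$ is a principal ideal domain and $V$ is finite-dimensional, $V_g$ decomposes as a direct sum $V_1 \oplus \cdots \oplus V_r$ of indecomposable $k[t]$-submodules, each of the form $k[t]/(f_i^{m_i})$ for a monic irreducible $f_i$ (this is the primary-cyclic decomposition). Each $V_i$ is $g$-invariant, so setting $n_i = \dim_k V_i$ we have $n = n_1 + \cdots + n_r$ and $g$ restricts to an element $g_i \in \GL(V_i)$, with $V \cong V_1 \oplus \cdots \oplus V_r$ as $\langle g\rangle$-modules and hence (a fortiori) the pieces are genuine modules for any abelian group acting blockwise.

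Next I would define $A_i = C_{\GL(V_i)}(g_i)$ for each $i$ and set $A = A_1 \times \cdots \times A_r$, viewed as a subgroup of $\GL_n(q)$ acting block-diagonally with respect to the decomposition $V = V_1 \oplus \cdots \oplus V_r$. By construction $A$ is abelian (each $A_i$ is abelian by Lemma~\ref{lemma:indecomposable}, since $(V_i)_{g_i} \cong k[t]/(f_i^{m_i})$ is indecomposable, and a direct product of abelian groups is abelian), and $g = (g_1,\ldots,g_r) \in A_1 \times \cdots \times A_r = A$ because $g_i \in C_{\GL(V_i)}(g_i) = A_i$ trivially. It remains to check that this $A$ satisfies the two defining conditions of Definition~\ref{def:An}: the decomposition $V = V_1 \oplus \cdots \oplus V_r$ must be a decomposition into \emph{indecomposable $A$-modules}, and each $A_i$ must equal $C_{\GL(V_i)}(a_i)$ for some $a_i$ with $(V_i)_{a_i}$ indecomposable as a $k[t]$-module. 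The second condition holds with $a_i = g_i$ by our very definition of $A_i$, together with the indecomposability of $(V_i)_{g_i}$ just noted. For the first condition, each $V_i$ is certainly an $A$-module (indeed $A_i$ acts on it and the other factors act trivially), and its indecomposability as an $A$-module follows from its indecomposability as a $\langle g_i \rangle$-module: any $A$-submodule of $V_i$ is in particular a $\langle g_i\rangle$-submodule, so a nontrivial $A$-module direct-sum splitting of $V_i$ would split $(V_i)_{g_i}$, contradicting indecomposability.

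The main point requiring care — and the place I would spend the most attention — is property~$(i)$: that $A$ is exactly the internal direct product $A_1 \times \cdots \times A_r$ with each $A_i \subseteq \GL(V_i)$, and not merely contained in such a product. Here this is immediate because we \emph{defined} $A$ that way, but one should double-check that $A$ genuinely sits inside $\GL_n(q)$ as claimed — i.e. that the block-diagonal embedding $\prod_i \GL(V_i) \hookrightarrow \GL(V) = \GL_n(q)$ associated to the fixed decomposition is well-defined and that $A$ lands in it. This is routine linear algebra. Once all of this is assembled, we conclude $g \in A \in \mathcal{A}_n(q)$; since $g$ was arbitrary, $\GL_n(q) = \bigcup_{A \in \mathcal{A}_n(q)} A$, which is Theorem~\ref{thm:sumupthms}~$(a)$. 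Finally, Lemma~\ref{lemma:elementary} is not needed for this proposition itself but is what converts this covering into the bound $\omega(\GL_n(q)) \le |\mathcal{A}_n(q)|$ afterwards.
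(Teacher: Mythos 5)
Your proposal is correct and follows essentially the same route as the paper's proof: decompose $V_g$ into indecomposable $k[t]$-modules, take $A_i=C_{\GL(V_i)}(g_i)$, and observe that $g$ lies in $A=A_1\times\cdots\times A_r\in\mathcal{A}_n(q)$. The extra verifications you supply (indecomposability of each $V_i$ as an $A$-module, and condition~$(ii)$ with $a_i=g_i$) are exactly the details the paper leaves implicit, and they are argued correctly.
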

\begin{proof}
Given $x$ in $\GL_n(q)$, consider a decomposition of
$V_x=V_1\oplus\cdots\oplus V_r$ into
indecomposable $k[t]$-modules. The action of $x$ on $V_i$ is given
by some element $a_i\in \GL (V_i)$. 
By Lemma~\ref{lemma:indecomposable},
$A_i=C_{\GL(V_i)}(a_i)$ is abelian. Now, $x\in A=A_1\times \cdots
\times A_r$ and $A\in \mathcal{A}_n(q)$. 
\end{proof}

The following definition is necessary in order to have a natural set of
labels for the elements in $\mathcal{A}_n(q)$ (see Lemma~\ref{lemma:labels}).
\begin{definition}{\rm
We denote by
$\Phi$ the set of functions from $\{(d,m)\mid d,m\geq
1\}$ to $\mathbb{N}$. Also, we write 
$\Phi_n$ for the subset of $\Phi$ containing the functions $\mu$ such that
$n=\sum_{d,m}dm\mu(d,m)$.} 
\end{definition}
For instance, $\Phi_1$ contains only one
element, namely the function $\mu$ defined by $\mu(1,1)=1$ and $\mu(d,m)=0$, for
$m>1$ or $d>1$. 

\begin{lemma}\label{lemma:labels}The conjugacy classes of subgroups in
  $\mathcal{A}_n(q)$ 
  are in one-to-one correspondence with the elements of $\Phi_n$.
\end{lemma}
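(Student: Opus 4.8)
The plan is to set up an explicit bijection between conjugacy classes of subgroups in $\mathcal{A}_n(q)$ and the elements of $\Phi_n$, using the decomposition data in Definition~\ref{def:An} together with Corollary~\ref{cor:cortolemma}. Given $A\in\mathcal{A}_n(q)$ with its decomposition $V=V_1\oplus\cdots\oplus V_r$ and $A=A_1\times\cdots\times A_r$ with $A_i=C_{\GL(V_i)}(a_i)$ and $(V_i)_{a_i}$ indecomposable, Lemma~\ref{lemma:indecomposable} tells us that $a_i$ is a cyclic matrix with minimum polynomial $f_i^{m_i}$ for some irreducible $f_i$ of degree $d_i$, with $d_im_i=\dim V_i$. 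I would associate to $A$ the function $\mu_A\in\Phi$ sending $(d,m)$ to the number of indices $i$ with $(d_i,m_i)=(d,m)$. Since $\sum_i\dim V_i=n$ and $\dim V_i=d_im_i$, we get $\sum_{d,m}dm\,\mu_A(d,m)=n$, so $\mu_A\in\Phi_n$. The first thing to check is that $\mu_A$ is well-defined, i.e.\ independent of the chosen decomposition: this follows because the isomorphism type of the $\GL(V_i)$-module $V_i$ (equivalently, by Corollary~\ref{cor:cortolemma}, the conjugacy class of $A_i$ and the pair $(d_i,m_i)$) is determined by the Krull--Schmidt theorem for the $A$-module $V$, and any two indecomposable decompositions of a module over a ring are equivalent up to permutation and isomorphism.

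Next I would show $\mu_A$ depends only on the conjugacy class of $A$ in $\GL_n(q)$: if $A' = A^x$, then conjugation by $x$ carries the decomposition $V=V_1\oplus\cdots\oplus V_r$ of the $A$-module $V$ to a decomposition $V=V_1x\oplus\cdots\oplus V_rx$ of the $A'$-module $V$ into indecomposables, with $A'_i = (A_i)^x = C_{\GL(V_ix)}(a_i^x)$, and $a_i^x$ still has minimum polynomial $f_i^{m_i}$; hence the multiset of pairs $(d_i,m_i)$ is unchanged and $\mu_{A'}=\mu_A$. So we have a well-defined map from conjugacy classes in $\mathcal{A}_n(q)$ to $\Phi_n$.

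For surjectivity, given $\mu\in\Phi_n$, for each pair $(d,m)$ with $\mu(d,m)>0$ take $\mu(d,m)$ copies of the representative $A_{d,m}$ from \eqref{def:A_dm}, acting on a $dm$-dimensional space; placing all these blocks along the diagonal on $V=k^n$ (possible since $\sum dm\,\mu(d,m)=n$) produces a subgroup $A=\prod A_{d,m}^{(j)}\in\mathcal{A}_n(q)$ with $\mu_A=\mu$. For injectivity, suppose $\mu_A=\mu_{A'}=\mu$. Then both $A$ and $A'$ decompose with indecomposable summands whose block data are, as multisets, $\{(d_i,m_i)\}$ matching $\mu$. By Corollary~\ref{cor:cortolemma}, $A_i$ is conjugate in $\GL(V_i)$ to the standard representative $A_{d_i,m_i}$, so after reordering the summands to match up the pairs and choosing $k$-linear isomorphisms $V_i\to V'_{\sigma(i)}$ realising these conjugacies, the resulting element $x\in\GL_n(q)$ conjugates $A$ to $A'$.

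The main obstacle is the well-definedness in the very first step: one must be careful that the pair $(d_i,m_i)$ — and not just the dimension $\dim V_i$ — is an invariant of the $A$-module summand $V_i$. This is exactly where Corollary~\ref{cor:cortolemma} is needed, since it shows $(d_i,m_i)$ is recovered from the conjugacy class of $A_i$ (equivalently the $A_i$-module structure on $V_i$), which in turn is pinned down by Krull--Schmidt applied to the $A$-module $V$; once this is in place, the rest is bookkeeping with block-diagonal matrices.
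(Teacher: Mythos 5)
Your proposal is correct and uses the same essential ingredients as the paper's proof — the Krull--Schmidt theorem to pin down the multiset of pairs $(d_i,m_i)$, Corollary~\ref{cor:cortolemma} to identify isomorphic/conjugate indecomposable pieces, and block-diagonal assembly of the standard representatives $A_{d,m}$ for the reverse direction; the only difference is that you build the map from conjugacy classes to $\Phi_n$, whereas the paper constructs the inverse map $\theta:\Phi_n\to\{\text{conjugacy classes}\}$ and checks injectivity and surjectivity of that. This is the same argument run in the opposite direction, so no further comment is needed.
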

\begin{proof}
We define a bijection $\theta$ from $\Phi_n$ to the set of conjugacy
classes of subgroups in $\mathcal{A}_n(q)$. Let $\mu$ be in $\Phi_n$. For
each $d,m$ and $i$ with $1\leq 
i\leq \mu(d,m)$, let $W_{d,m,i}$ be a $k$-subspace of $V$ of dimension $dm$
such that $V=\oplus_{d,m,i}W_{d,m,i}$. Note that this is possible
because $\dim_k(V)=n=\sum_{d,m}dm\mu(d,m)$. Consider
$A_{d,m}^{(i)}\leq\GL (W_{d,m,i})$, with $A_{d,m}^{(i)}=A_{d,m}$ as in
Equation~\ref{def:A_dm}, and  set 
$A=\prod_{d,m,i}A_{d,m}^{(i)}$. By construction, $A\in
\mathcal{A}_n(q)$. Define 
$\theta(\mu)$ to be 
the conjugacy class containing $A$. Now, let $V=\oplus_{k=1}^sM_k$ be
another decomposition of $V$ into a direct sum of non-zero
indecomposable $A$-submodules. By the Krull-Schmidt
theorem~\cite[Theorem~$14.5$]{CuRe},  there exists a bijective
function $f$ between the set of indices $\{(d,m,i)\mid 1\leq i\leq
\mu(d,m)\}$ and $\{1,\ldots,s\}$ such that $W_{d,m,i}\cong
M_{f(d,m,i)}$. Thus, Corollary~\ref{cor:cortolemma} yields that $\mu$
is uniquely determined from the conjugacy class of $A$ in $\GL_n(q)$,
that is, $\theta$ is injective.

The map $\theta$ is surjective by the definition of
$\mathcal{A}_n(q)$. 
\end{proof}

Given $\mu\in \Phi_n$, we denote by 
\begin{equation}\label{def:Amu}
A_\mu
\end{equation} a representative of the
conjugacy class in $\mathcal{A}_n(q)$ corresponding to $\mu$ in
$\Phi_n$. We note that if $\mu(d,m)=0$ for  
$m>1$, then $A_\mu$ is a torus in $\GL_n(q)$. In particular,
every maximal torus of $\GL_n(q)$ is a member of $\mathcal{A}_n(q)$.  

Before  proving the main result of this paper, we need
first a definition  and then some preliminary lemmas. 

\begin{definition}\label{def:cyclicunip}
{\rm Let $\mu$ be in $\Phi_n$ and $q=2$. We
  say that $A_\mu$ has \emph{cyclic unipotent summand} if $\mu(1,x)=0$
  for all but at  most one value of $x$, and if $\mu(1,x)\neq 0$
  for $x=m$, say, then $\mu(1,m)=1$.
In particular, by Definition~\ref{def:An} and
  Lemma~\ref{lemma:indecomposable}, $V$ has at most one indecomposable 
  $A_\mu$-invariant summand $W$ such that the action of $A_\mu$ on $W$
  is given by the centralizer of a regular unipotent matrix (which is a
  cyclic matrix).}   
\end{definition}

\begin{lemma}\label{lemma:uniquedecomposition}Let $\mu$ be in
  $\Phi_n$. The decomposition of  $V$  as direct
sum of  indecomposable $A_\mu$-modules is unique up to permutation of the
summands if and only if either $q\geq 3$, or $q=2$ and  $A_\mu$ has cyclic
unipotent summand.
\end{lemma}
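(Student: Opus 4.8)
The plan is to reduce the question of uniqueness of an indecomposable decomposition of $V$ as an $A_\mu$-module to a statement about the endomorphism algebra $\End_{A_\mu}(V)$, via the Krull--Schmidt--Azumaya machinery. Recall from Lemma~\ref{lemma:indecomposable} that each indecomposable summand $V_i$ of $V$ carries the action of $A_i = C_{\GL(V_i)}(a_i)$ where $(V_i)_{a_i} \cong k[t]/(f_i^{m_i})$ for an irreducible $f_i$ of degree $d_i$; moreover $A_i$ is (isomorphic to the unit group of) a local ring, namely $\End_{k[t^d]}((V_i)_{a_i})$ after passing to $\GL_{m_i}(q^{d_i})$, whose residue field is $\mathbb{F}_{q^{d_i}}$. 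Two such summands $V_i, V_j$ are isomorphic as $A_\mu$-modules precisely when the pairs $(d_i,m_i)$ and $(d_j,m_j)$ coincide and the restrictions of $A_\mu$ agree on them, which by construction of $A_\mu$ happens exactly when the corresponding indices $(d,m,\cdot)$ have the same $(d,m)$ — see Corollary~\ref{cor:cortolemma}. So the decomposition is unique up to permutation of summands as soon as Krull--Schmidt--Azumaya applies, i.e.\ as soon as $\End_{A_\mu}(V_i)$ is local for every indecomposable summand $V_i$ (equivalently, $V_i$ is \emph{strongly} indecomposable).

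First I would verify the ``if'' direction. The module $V_i$ is an $A_\mu$-module, but one must compute $\End_{A_\mu}(V_i)$, not $\End_{k[t]}(V_i)$: the point is that $A_\mu$ acts on $V_i$ through its $i$-th factor $A_i$ only, which is the \emph{full} centralizer algebra's unit group, so $\End_{A_\mu}(V_i) = \End_{A_i}(V_i)$ is the centralizer in $\End_k(V_i)$ of $C_{\GL(V_i)}(a_i)$, and this is the double centralizer — which by Lemma~\ref{lemma:indecomposable}'s proof is the local ring $\End_{k[t^{d_i}]}((V_i)_{a_i}) \cong (\mathbb{F}_{q^{d_i}})[t]/(t^{m_i})$. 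Wait — actually I need to be slightly careful: when $q \geq 3$ the unipotent block $W = k[t]/(t^m)$ has $A$-action by $C_{\GL(W)}(u)$ with $u$ regular unipotent, and the $\End_{A}(W)$ is again the full polynomial algebra $k[t]/(t^m)$ which is local, so \emph{every} indecomposable summand is strongly indecomposable and Krull--Schmidt applies verbatim; this gives uniqueness for all $\mu$ when $q \geq 3$. When $q = 2$, the same argument works for every block \emph{except} possibly several unipotent blocks $k[t]/(t^{m})$ with $d=1$: here $\End_{A}(W) = \mathbb{F}_2[t]/(t^m)$ is still local, so each such block is still strongly indecomposable individually, and Krull--Schmidt still applies to the whole of $V$. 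So in fact I expect uniqueness to hold whenever each indecomposable summand is strongly indecomposable, which is \emph{always}, and the subtlety must lie elsewhere — namely, the hypothesis ``$A_\mu$ has cyclic unipotent summand'' must be exactly what is needed to prevent the appearance of \emph{new} indecomposable summands not of the form $A_{d,m}$, i.e.\ summands $W$ on which $A_\mu$ acts indecomposably but \emph{not} as any $C_{\GL(W)}(a)$ with $W_a$ indecomposable. Reconsidering: the issue is that $A_\mu$ restricted to the sum $U$ of all unipotent $d=1$ blocks may act with a \emph{smaller} image than on each block, so $U$ can decompose into indecomposable $A_\mu$-modules in inequivalent ways when there are $\geq 2$ such blocks — this is the real content, and the ``cyclic unipotent summand'' condition forces at most one such block, eliminating the ambiguity.

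So the structure of the argument is: (1) split off from $V$ the $A_\mu$-submodule $U$ spanned by all indecomposable summands $V_i$ with $d_i = 1$ and $f_i = t-1$ (unipotent blocks over the prime field); on the complement $V'$ the factors $A_i$ have residue field $\mathbb{F}_{q^{d_i}} \neq \mathbb{F}_2$, or $d_i \geq 2$, and a counting/central-idempotent argument shows any $A_\mu$-decomposition of $V$ must respect the isotypic splitting and restrict to $U$ and $V'$ separately; on $V'$, each block is strongly indecomposable with non-isomorphic neighbours of different endomorphism-ring sizes, so Krull--Schmidt gives uniqueness. (2) On $U$: if $q \geq 3$ this case is empty in the dangerous sense — actually when $q\geq 3$ every unipotent block $C_{\GL(W)}(u)$ still has local full endomorphism ring and behaves well, so take $q=2$; then $A_\mu|_U$ is a product of unit groups $1 + tk[t]/(t^{m_j})$ acting block-diagonally, and I would show: if there are two blocks of sizes $m_1, m_2$, one can produce a "diagonal" indecomposable summand of dimension $\max(m_1,m_2)$ (using that over $\mathbb{F}_2$ the scalars do not separate the blocks), giving a genuinely different indecomposable decomposition; conversely, if there is at most one block, $U$ itself is indecomposable and the decomposition is forced. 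The main obstacle is step (2), the explicit analysis over $\mathbb{F}_2$ of decompositions of $U$ as a module over the (non-local, non-semisimple) ring generated by the block-diagonal unipotent centralizers — this is where the special role of $q=2$ genuinely enters, and it will require an honest hands-on construction of the alternative decomposition when two unipotent blocks are present, together with an argument (probably via lifting idempotents modulo the radical, where the radical quotient is $\mathbb{F}_2 \times \cdots \times \mathbb{F}_2$, one factor per distinct block size) that no alternative exists otherwise.
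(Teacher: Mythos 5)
There is a genuine gap, and it lies in the notion of uniqueness you are proving. The lemma asserts that the summands themselves --- the actual subspaces $V_{d,m}^{(i)}$ --- are uniquely determined (this is what Proposition~\ref{prop:numberA_n} needs in order to conclude $N_{\GL_n(q)}(A_\mu)\subseteq\Stab(V,\mu)$), whereas the Krull--Schmidt--Azumaya theorem you invoke only says that any two indecomposable decompositions have pairwise \emph{isomorphic} summands. The latter holds for every finite-dimensional module and every $q$, so it cannot distinguish $q\geq 3$ from $q=2$; this is exactly the contradiction you ran into mid-argument (``uniqueness \ldots which is always''), and your final plan still appeals to Krull--Schmidt on the non-unipotent part $V'$, so the problem persists there. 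Subspace uniqueness is strictly stronger: even when all summands are pairwise non-isomorphic with local endomorphism rings, any nonzero $\phi\in\mathrm{Hom}_{A_\mu}(V_i,V_j)$ with $i\neq j$ yields the literally different decomposition in which $V_i$ is replaced by its graph $\{v+\phi(v)\mid v\in V_i\}$. So the ``if'' direction requires showing precisely that such homomorphisms cannot occur (equivalently, that every indecomposable direct summand coincides with one of the $V_{d,m}^{(i)}$), and neither strong indecomposability nor an unspecified ``isotypic splitting plus counting'' argument delivers this.

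The paper's proof supplies the missing ingredient: take an arbitrary indecomposable summand $W=\langle v\rangle_{A_\mu}$, write $v=\sum_{d,m,i}v_{d,m}^{(i)}$, and use the scalar $s_{d,m}^{(i)}\in A_{d,m}^{(i)}$ of $\GL_m(q^d)$ corresponding to a generator of $\mathbb{F}_{q^d}^{\times}$, which acts trivially on all the other blocks and is nontrivial unless $(q,d)=(2,1)$; then $v_{d,m}^{(i)}(s_{d,m}^{(i)}-1)=vs_{d,m}^{(i)}-v\in W$, hence $v_{d,m}^{(i)}\in W$, and the cyclic-unipotent-summand hypothesis guarantees at most one exceptional component with $(q,d)=(2,1)$, which is then recovered by subtraction since $v\in W$; indecomposability forces $W=V_{d,m}^{(i)}$ for some $(d,m,i)$, giving uniqueness of the subspaces. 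Your converse sketch is essentially correct and matches the paper: over $\mathbb{F}_2$ the map sending the top of one unipotent block to the socle of another is $A_\mu$-equivariant, and its graph is exactly the paper's summand $V_2'=\langle v_{2,1},\ldots,v_{2,r_2-1},v_{1,1}+v_{2,r_2}\rangle$; but this construction would still need to be written out explicitly rather than deferred to ``lifting idempotents modulo the radical''.
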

\begin{proof}
Let $$V=\bigoplus_{ 
\scriptsize
\begin{array}{c}
d,m,\\
1\leq i\leq \mu(d,m)
\end{array}
\normalsize
}V_{d,m}^{(i)}$$ be an $A_\mu$-invariant direct decomposition
 in indecomposable modules labelled so that $\dim V_{d,m}^{(i)}=dm$ (see
 Lemmas~\ref{lemma:labels}). By
 Definition~\ref{def:An}~$(i)$, we have 
 $A_\mu=\prod_{d,m,i}A_{d,m}^{(i)}$, where $A_{d,m}^{(i)}\subseteq
 \GL(V_{d,m}^{(i)})$.

Assume $q\geq 3 $, or $q=2$ and $A_\mu$ has cyclic unipotent
summand. We show that 
the decomposition of $V$ as direct sum 
of indecomposable $A_\mu$-modules is unique, up to permutation of the
summands. 
Let $W$ be an
  indecomposable $A_\mu$-invariant summand of $V$. Now, the $A_\mu$-module
  $W$ is cyclic, that is, there exists $v\in W$ 
such that $W=\langle v\rangle_{A_\mu}$ (where $\langle v\rangle_{A_\mu}=\langle
va\mid a \in A_\mu\rangle$). Write $v=\sum_{d,m,i}v_{d,m}^{(i)}$, with
$v_{d,m}^{(i)}\in V_{d,m}^{(i)}$. We 
claim that 
\begin{equation}\label{eq:-1}
W=\bigoplus_{d,m,i}\langle
v_{d,m}^{(i)}\rangle_{A_\mu}.
\end{equation}  
The $A_\mu$-module $W$ is generated by $va=\sum_{d,m,i}v_{d,m}^{(i)}a$
(for $a\in A_\mu$), where $v_{d,m}^{(i)}a\in \langle  
v_{d,m}^{(i)}\rangle_{A_\mu}$. Therefore, $W\subseteq 
\sum_{d,m,i} \langle v_{d,m}^{(i)}\rangle_{A_\mu}$. Conversely, as
$\langle v_{d,m}^{(i)}\rangle_{A_\mu}\cap\langle
v_{d',m'}^{(i')}\rangle_{A_\mu}\subseteq V_{d,m}^{(i)}\cap
V_{d',m'}^{(i')}=0$ for $(d,m,i)\neq 
(d',m',i')$, it suffices to prove that 
$v_{d,m}^{(i)}\in W$ for every $d,m,i$. By
Definition~\ref{def:An}~$(ii)$ and Lemma~\ref{lemma:indecomposable},
the group $A_{d,m}^{(i)}$ contains a  scalar matrix $s_{d,m}^{(i)}$
of $\GL_{m}(q^d)$ 
corresponding to a generator of $\mathbb{F}_{q^d}$ if $d\geq 2$, and to
a primitive element of $\mathbb{F}_q$ if $d=1$. In particular,
$s_{d,m}^{(i)}\neq 1$ if $(q,d)\neq (2,1)$.  

Assume first that $(q,d)\neq (2,1)$. Since $s_{d,m}^{(i)}$ acts as the identity
matrix on $V_{d',m'}^{(i')}$ (for $(d',m',i')\neq (d,m,i)$), we get
$vs_{d,m}^{(i)}=\sum_{(d',m',i')\neq
  (d,m,i)}v_{d',m'}^{(i')}+v_{d,m}^{(i)}s_{d,m}^{(i)}$. Therefore, 
$v_{d,m}^{(i)}(s_{d,m}^{(i)}-1)=vs_{d,m}^{(i)}-v\in W$. As
$s_{d,m}^{(i)}$ acts as a 
non-identity scalar matrix on $V_{d,m}^{(i)}$, we obtain
$v_{d,m}^{(i)}\in W$. This yields that if $(q,d)\neq (2,1)$, then
$v_{d,m}^{(i)}$ lies in $W$ for every $m$ and $i$. By hypothesis on
$q$ and $A_\mu$ and by Definition~\ref{def:cyclicunip}, we obtain that
all but possibly 
one summand  of $v$ lies in $W$. The exceptional case $(q,d)=(2,1)$
occurs only if 
$q=2$ and $v_{1,m}^{(1)}$ is the only 
summand of $v$ that is not covered by the argument in this paragraph; since
$v\in W$, we get  also in this case that $v_{1,m}^{(1)}\in W$ and
hence that every summand of $v$ lies in 
$W$. Our claim is now proved. 

Since $W$ is indecomposable, from Equation~\ref{eq:-1} we have $W=\langle
v_{d,m}^{(i)}\rangle_{A_\mu}\subseteq V_{d,m}^{(i)}$ for 
some $d,m,i$. Since $W$ is an $A_\mu$-invariant summand and  $V_{d,m}^{(i)}$ is
indecomposable, $W=V_{d,m}^{(i)}$. As $W$ is an arbitrary
indecomposable summand of $V$, we get that
$\{V_{d,m}^{(i)}\}_{d,m,i}$ are the only indecomposable summands of $V$
and the decomposition is unique.

Conversely, assume that $q=2$ and $A_\mu$ does not have a cyclic unipotent
summand, that is, $\mu(1,m)\geq 2$ for some $m$,
or  $\mu(1,m_1)=\mu(1,m_2)=1$ with $m_1\neq m_2$. Let $V_1$
and $V_2$ be two distinct $A_\mu$-invariant indecomposable direct
summands of $V$ 
isomorphic to $V_{1,m}$ (if $\mu(1,m)\geq 2$) or isomorphic to
$V_{1,m_1}$ and $V_{1,m_2}$ (if $\mu(1,m_1)=\mu(1,m_2)=1$). By
Lemma~\ref{lemma:indecomposable}, Equation~\ref{def:A_dm} and
Definition~\ref{def:An}, $V_i$ is
an $A_i$-module, where $A_i=C_{\GL (V_i)}(u_i)$ and  $u_i$ is  a regular
unipotent matrix of $\GL(V_i)$. Let $v_{1,1},\ldots, v_{1,r_1}$ 
(respectively, $v_{2,1},\ldots,v_{2,r_2}$) be a $k$-basis of $V_1$
(respectively, $V_2$) such that $v_{i,j}^{u_i}=v_{i,j}+v_{i,j-1}$
(for $1<j\leq r_{i}$) and $v_{i,1}^{u_i}=v_{i,1}$ for $i=1,2$.  Define
$V_2'=\langle v_{2,1},\ldots,v_{2,r_2-1},
v_{1,1}+v_{2,r_2}\rangle$. Clearly, $V_1\oplus V_2=V_1\oplus V_2'$. We
claim that $V_{2}'$ is an indecomposable $A_\mu$-invariant summand of
$V$. Since $u_2$ is a cyclic matrix, $\End_{k\langle u_2\rangle}(V_2)$ is a
polynomial algebra in $u_2$. Therefore, in order to show that $V_2'$ is
an $A_\mu$-invariant summand of $V$, it suffices to show that $V_2'$ is
$\langle u_2\rangle$-invariant, which is clear from the definition of
$V_2'$ and from the action of $u_2$ on $V_2$. 

As $V_{2'}\subseteq V_{1}\oplus V_2$,  $V_2'\neq
V_{1}$ and $V_2'\neq V_2$, we obtain that the decomposition of $V$ as
direct sum of 
indecomposable  $A_\mu$-modules is not unique. 
\end{proof} 

We give a definition which is needed in
Proposition~\ref{prop:numberA_n} and in Section~\ref{sec:F}.

\begin{definition}\label{def:stab}{\rm Let $\mu $ be in $\Phi_n$ and
  $V=V_1\oplus \cdots 
  \oplus V_r$ be an $A_\mu$-invariant decomposition of $V$ in
  indecomposable modules. We write $\Stab(V,\mu)$ for the subgroup of
   $\GL_n(q)$ preserving the direct decomposition $V_1\oplus\cdots\oplus V_r$ of
  $V$, that is, $\Stab(V,\mu)=\{g\in \GL_n(q)\mid V_i^g\in
  \{V_1,\ldots,V_r\}\textrm { for every }i\}$.}  
\end{definition}

We start by computing the size of the normaliser of a subgroup
$A_\mu$ and by proving that, if $q\geq 3$ or $q=2$ and  $A_\mu$ has
cyclic unipotent summand, then $A_\mu$ is a maximal
abelian subgroup  of 
$\GL_n(q)$, for $\mu \in\Phi$. 
\begin{proposition}\label{prop:numberA_n}Let $\mu$ be in
  $\Phi_n$. Then 
$|N_{\GL_n(q)}(A_\mu)\cap \Stab(V,\mu)|$ equals $$
\left(\prod_{d\geq  1}(d(1-q^{-d})q^d)^{\mu(d,1)}\mu(d,1)!\right)
\left(\prod_{d\geq 1,m\geq
  2}(d(1-q^{-d})^2q^{2dm-d})^{\mu(d,m)}\mu(d,m)!\right).$$
If  either $q\geq 3$, or $q=2$ and $A_\mu$ has cyclic unipotent summand, then
$N_{\GL_n(q)}(A_\mu)\subseteq \Stab(V,\mu)$, and  $A_\mu$ is a maximal
abelian subgroup of $\GL_n(q)$.
\end{proposition}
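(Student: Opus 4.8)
The plan is to compute $N:=N_{\GL_n(q)}(A_\mu)\cap\Stab(V,\mu)$ first, since inside $\Stab(V,\mu)$ everything decomposes blockwise along $V=V_1\oplus\cdots\oplus V_r$, and then to promote this to the full normaliser under the hypothesis that the indecomposable decomposition is unique (Lemma~\ref{lemma:uniquedecomposition}). For the first part, I would group the indecomposable summands of $V$ by isomorphism type: by Corollary~\ref{cor:cortolemma} the type of $V_i$ as an $A_\mu$-summand is recorded by the pair $(d,m)$, and there are exactly $\mu(d,m)$ summands of each type $(d,m)$. An element $g\in\Stab(V,\mu)$ permutes the summands; since conjugating $A_\mu$ back to itself forces $g$ to send each $V_i$ to a summand of the \emph{same} isomorphism type (two summands $A_{d,m}^{(i)}$ and $A_{d',m'}^{(i')}$ are conjugate in the block structure only if $(d,m)=(d',m')$, again by Corollary~\ref{cor:cortolemma}), the induced permutation lies in $\prod_{d,m}\Sym(\mu(d,m))$, contributing the factor $\prod_{d,m}\mu(d,m)!$. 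Having fixed the permutation, $g$ acts within each summand and must normalise the corresponding factor $A_{d,m}^{(i)}=A_{d,m}$; by Lemma~\ref{lemma:normalizer} this contributes $d(1-q^{-d})q^d$ per summand when $m=1$ and $d(1-q^{-d})^2q^{2dm-d}$ per summand when $m\geq 2$. Multiplying over all $\mu(d,m)$ summands of each type and over all types gives the displayed formula; the key point making this a clean product is that an element of $\Stab(V,\mu)$ normalising $A_\mu$ is determined precisely by its summand permutation together with its restrictions to the summands, and each restriction ranges independently over $N_{\GL(V_i)}(A_{d,m})$.

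For the second part, suppose $q\geq 3$, or $q=2$ and $A_\mu$ has cyclic unipotent summand. By Lemma~\ref{lemma:uniquedecomposition} the decomposition $V=V_1\oplus\cdots\oplus V_r$ into indecomposable $A_\mu$-modules is unique up to permuting the summands. Now take any $g\in N_{\GL_n(q)}(A_\mu)$. Then $V=V_1^g\oplus\cdots\oplus V_r^g$ is again a direct sum of indecomposable $A_\mu$-modules (since $A_\mu^g=A_\mu$ and $g$ is a $k$-vector space isomorphism), so by uniqueness the set $\{V_1^g,\ldots,V_r^g\}$ coincides with $\{V_1,\ldots,V_r\}$; that is, $g\in\Stab(V,\mu)$. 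Hence $N_{\GL_n(q)}(A_\mu)=N$ in this case. Finally, for maximality as an abelian subgroup: let $A$ be abelian with $A_\mu\leq A\leq\GL_n(q)$. By Definition~\ref{def:An}~$(ii)$ and Lemma~\ref{lemma:indecomposable}, $A_\mu$ contains an element $g=su$ whose restriction to each summand generates that summand as an $A_\mu$-module and whose centraliser in $\GL(V_i)$ is exactly $A_i$; more directly, picking in each block a generator $a_i$ with $A_i=C_{\GL(V_i)}(a_i)$ and setting $a=\prod_i a_i\in A_\mu$, any $x\in A$ commutes with $a$. One then checks that centralising $a$ forces $x$ to preserve each $V_i$ (the blocks are the primary components / generalised eigenspace data of $a$ in an appropriate sense, distinguished by the distinct semisimple parts and the fact that $a_i$ acts indecomposably) and to centralise $a_i$ within $\GL(V_i)$, whence $x\in\prod_i C_{\GL(V_i)}(a_i)=A_\mu$. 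Therefore $A=A_\mu$.

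The main obstacle is the maximality argument, specifically showing that an element $x$ commuting with the chosen $a\in A_\mu$ must respect the direct decomposition $V=V_1\oplus\cdots\oplus V_r$. When the summand types $(d,m)$ are pairwise distinct this is immediate from the distinct minimal polynomials $f^m$ of the blocks, but when several summands share the same type one cannot separate them by $a$ alone and must instead invoke the uniqueness of the indecomposable decomposition: $C_V(a)$-module theory shows the $A_\mu$-submodule generated by any $x$-image of $V_i$ is again an indecomposable summand isomorphic to $V_i$, and uniqueness (Lemma~\ref{lemma:uniquedecomposition}, which is exactly where the hypothesis $q\geq 3$ or ``cyclic unipotent summand'' enters) pins it down to one of the $V_j$. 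This is also the only place the hypothesis on $(q,\mu)$ is used; without it the second conclusion genuinely fails, as the last paragraph of the proof of Lemma~\ref{lemma:uniquedecomposition} illustrates.
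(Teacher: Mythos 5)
Your computation of $|N_{\GL_n(q)}(A_\mu)\cap\Stab(V,\mu)|$ and your proof that $N_{\GL_n(q)}(A_\mu)\subseteq\Stab(V,\mu)$ are essentially the paper's argument: a blockwise reduction to Lemma~\ref{lemma:normalizer}, with the factor $\prod_{d,m}\mu(d,m)!$ accounting for permutations of summands of the same type (Corollary~\ref{cor:cortolemma}), and Lemma~\ref{lemma:uniquedecomposition} to see that any normalising element permutes the indecomposable summands. For the maximality claim the paper is terser (it simply invokes the blockwise maximality from Lemma~\ref{lemma:normalizer}), and your centraliser argument is a sensible way to make that step explicit.

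However, as written your maximality argument has one concrete hole. You take $x$ in an abelian overgroup $A\supseteq A_\mu$, note that $x$ centralises (hence normalises) $A_\mu$, and conclude from uniqueness of the decomposition that $x$ sends each $V_i$ to \emph{some} indecomposable summand $V_j$; but you never show $j=i$, and without that you cannot conclude $x\in\prod_i C_{\GL(V_i)}(a_i)$ (commuting with the single element $a=\prod_i a_i$ does not force block preservation when several summands share a type, as you yourself note). The fix is short and uses the full centralising property: if $V_i^x=V_j$ with $j\neq i$, then for $v\in V_i$ and $a\in A_{d,m}^{(i)}$ (which acts trivially on $V_j$) we get $(va)x=(vx)a=vx$, hence $va=v$ for all $v\in V_i$, i.e.\ $A_{d,m}^{(i)}=1$; this happens only for a summand of type $(d,m)=(1,1)$ with $q=2$, and under the stated hypotheses there is at most one such summand, so the induced permutation must be the identity. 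Then $x$ preserves each $V_i$, its restriction centralises $A_{d,m}^{(i)}$, and $A_{d,m}^{(i)}$ is self-centralising in $\GL(V_i)$ because it is maximal abelian there (Lemma~\ref{lemma:normalizer}); hence $x\in A_\mu$, completing the argument you sketched.
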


\begin{proof}
Let $\mu$ be in $\Phi_n$. Write $A_\mu=\prod_{d,m}A_{d,m}^{\mu(d,m)}$,
where $A_{d,m}$ is as defined in Equation~\ref{def:A_dm}. By
Definition~\ref{def:stab}, we have  
$$|N_{\GL_n(q)}(A_\mu)\cap
\Stab(V,\mu)|=\prod_{d,m}|N_{\GL_{dm}(q)}(A_{d,m})|^{\mu(d,m)}\mu(d,m)!.$$ 
Applying Lemma~\ref{lemma:normalizer}, the equality in the proposition follows. 

Assume that either $q\geq 3$, or that $q=2$ and $A_\mu$ has cyclic
unipotent summand. 
By Lemma~\ref{lemma:uniquedecomposition}, every element of
$\GL_n(q)$ normalising $A_\mu$ induces a permutation of the
indecomposable $A_\mu$-submodules of
$V$. Also, Corollary~\ref{cor:cortolemma} yields that indecomposable
$A_{\mu}$-submodules are isomorphic if and only if they correspond to
the same $d,m$. Therefore, $N_{\GL_n(q)}(A_\mu)\subseteq
\Stab(V,\mu)$. Furthermore, by
Lemma~\ref{lemma:normalizer}, $A_{d,m}$ is a maximal abelian subgroup
of $\GL_{dm}(q)$ and so $A_\mu$ is a maximal abelian subgroup of $\GL_n(q)$. 
\end{proof}

\begin{remark}\label{rk:00}
{\rm The converse of the last assertion of Proposition~\ref{prop:numberA_n}
is also true. Indeed, if $q=2$ and $A_\mu$ does not have cyclic
unipotent summand then $N_{\GL_n(q)}(A_\mu)$ contains an element $x$
that does not lie in $\Stab(V,\mu)$ and $\langle A_\mu,x\rangle$ is
abelian. Namely, in the notation of the last part of the proof of
Lemma~\ref{lemma:uniquedecomposition}, the element $x$ can be defined
to act as the identity on all summands of the decomposition $\oplus_iV_i$,
except those denoted $V_1\oplus V_2$. The action of $x$ on $V_1\oplus
V_2$ is given by $v_{i,j}^x=v_{i,j}$ except
that $v_{1,1}^x=v_{1,1}+v_{2,r_2}$.}
\end{remark}

Next, we prove Theorem~\ref{thm:sumupthms}~$(c)$.

\begin{theorem}\label{thm:exact}$\mathcal{N}_n(q)\subseteq\mathcal{A}_n(q)$
  with equality if and only if $q> n$. 
\end{theorem}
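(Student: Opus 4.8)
The plan is to prove the two assertions separately: first the containment $\mathcal{N}_n(q)\subseteq\mathcal{A}_n(q)$, which holds for all $q$ and $n$, and then the characterisation of equality. For the containment, I would take a cyclic matrix $g\in\GL_n(q)$ and observe that, since $g$ is cyclic, the $k[t]$-module $V_g$ is cyclic, hence (by the structure theory already recalled in Section~\ref{sec:notation}) a direct sum $V_1\oplus\cdots\oplus V_r$ of cyclic modules $k[t]/(f_i^{m_i})$ with the irreducible polynomials $f_i$ pairwise distinct. Each $V_i$ is an indecomposable $\langle g\rangle$-module, and a standard argument shows that $C_{\GL_n(q)}(g)=\prod_i C_{\GL(V_i)}(g|_{V_i})$ because $g$ acts with coprime ``data'' on the different summands; each factor is $C_{\GL(V_i)}(a_i)$ with $(V_i)_{a_i}$ indecomposable, so $C_{\GL_n(q)}(g)$ satisfies Definition~\ref{def:An}. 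This gives $\mathcal{N}_n(q)\subseteq\mathcal{A}_n(q)$.

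For the equality when $q>n$, I would show every $A_\mu\in\mathcal{A}_n(q)$ is the centraliser of a single cyclic matrix. Given $\mu\in\Phi_n$, the group $A_\mu=\prod_{d,m,i}A_{d,m}^{(i)}$ decomposes $V=\bigoplus V_{d,m}^{(i)}$, and on $V_{d,m}^{(i)}$ the group $A_{d,m}^{(i)}$ is the centraliser of an element $a_{d,m}^{(i)}$ whose minimal polynomial is $f_{d,m,i}^{\,m}$ for some irreducible $f_{d,m,i}$ of degree $d$. The point is that when $q>n$ there are enough irreducible polynomials of each degree $d\le n$ to choose the $f_{d,m,i}$ \emph{pairwise distinct across all $(d,m,i)$}: the number of monic irreducibles of degree $d$ over $\mathbb{F}_q$ is at least $(q^d-q^{d/2+1})/d$, and one checks this exceeds the number $\mu(d,\cdot)$ of summands of each dimension-type needing such a polynomial when $q>n$ (this is where the hypothesis $q>n$ is used in an essential way — a short counting estimate, the kind of routine inequality I would not grind through here). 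With pairwise distinct $f$'s, the element $g$ acting as $a_{d,m}^{(i)}$ on each summand has characteristic polynomial $\prod f_{d,m,i}^{\,m}$ equal to its minimal polynomial (coprime factors), so $g$ is cyclic and $C_{\GL_n(q)}(g)=A_\mu$; hence $A_\mu\in\mathcal{N}_n(q)$.

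For the converse direction of the equality — that $q>n$ is \emph{necessary} — I would exhibit, for each $q\le n$, a function $\mu\in\Phi_n$ with $A_\mu\notin\mathcal{N}_n(q)$. The natural candidate is forced by a pigeonhole obstruction: if $q\le n$ then (at least for suitable $n$) one is compelled to repeat an irreducible polynomial, and an $A_\mu$ with two indecomposable summands of the same isomorphism type cannot be the centraliser of a cyclic matrix, since a cyclic matrix $g$ has $V_g$ multiplicity-free. The cleanest instance is to take $\mu$ supported on two copies of a module of the form $k[t]/(f)$ with $\deg f$ small, say using $\mu(1,1)=2$ (two $1$-dimensional summands, which forces a repeated linear polynomial since $\mathbb{F}_q$ has only $q-1$ nonzero scalars) padded out to dimension $n$ by a Singer-type summand; I would verify directly that the resulting $A_\mu$ properly contains — or is distinct from — any centraliser of a cyclic matrix, e.g. by comparing orders via Lemma~\ref{lemma:indecomposable} and the fact that a cyclic centraliser has order $\prod_i(1-q^{-d_i})q^{d_in_i}$ with distinct $(d_i,f_i)$.

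\textbf{Main obstacle.} I expect the delicate point to be pinning down \emph{exactly} the threshold $q>n$ rather than merely ``$q$ large'': the forward direction needs the precise count of irreducible polynomials of each degree to be large enough simultaneously for all types occurring in any $\mu\in\Phi_n$, and the reverse direction needs, for \emph{every} $q\le n$, the construction of an explicit bad $\mu$ — the boundary cases (small $n$, $q$ a prime power close to $n$) are where the bookkeeping is most likely to need care, and where one must confirm that no clever alternative choice of distinct irreducibles rescues $A_\mu$ into $\mathcal{N}_n(q)$.
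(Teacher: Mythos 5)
Your containment argument and your proof that $q>n$ suffices are essentially the paper's: decompose $V_g$ into primary (indecomposable) components to see $C_{\GL_n(q)}(g)\in\mathcal{A}_n(q)$, and for the converse choose pairwise distinct irreducibles $f_{d,m,i}$ to manufacture a cyclic $g_\mu$ with $C_{\GL_n(q)}(g_\mu)=A_\mu$. (The paper closes the identification $A=C$ via maximality of $A_\mu$ from Proposition~\ref{prop:numberA_n} rather than a direct product decomposition of the centraliser, but both routes work. One detail you defer matters: your bound $(q^d-q^{d/2+1})/d$ is useless for $d=1$, and $d=1$ is precisely the binding case — there are exactly $q-1$ admissible linear polynomials $t-\alpha$ with $\alpha\ne 0$, while a $\mu\in\Phi_n$ may demand up to $n$ of them, which is exactly where $q>n$, i.e.\ $q-1\ge n$, is needed.)

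There is, however, a genuine error in your necessity direction. Your key claim — that an $A_\mu$ with two indecomposable summands of the same type $(d,m)$ cannot be a cyclic centraliser because $V_g$ is multiplicity-free for cyclic $g$ — is false: two summands of the same type do not force a cyclic element of $A_\mu$ to repeat an irreducible factor, since $g$ may act on them via \emph{different} irreducible polynomials of the same degree. Concretely, your candidate $\mu(1,1)=2$ padded by a Singer summand fails for $q\ge 3$: in $\GL_4(3)$ take $g$ with eigenvalues $1$ and $2$ on the two lines and a generator of $\mathbb{F}_9^*$ on the $2$-dimensional summand; its characteristic polynomial $(t-1)(t-2)f(t)$ equals its minimal polynomial, so $g$ is cyclic and $C_{\GL_4(3)}(g)=A_\mu$, putting your $A_\mu$ inside $\mathcal{N}_4(3)$. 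The obstruction only bites when the number of degree-$d$ summands exceeds the supply of degree-$d$ irreducibles with nonzero constant term, and for $q\le n$ the one construction that always works is the paper's: take $\mu_0(1,1)=n$, so $A_{\mu_0}$ is the full diagonal torus. Since only $q-1\le n-1$ nonzero eigenvalues are available, \emph{every} element of $A_{\mu_0}$ has a repeated eigenvalue and hence is non-cyclic; as any $C\in\mathcal{N}_n(q)$ contains the cyclic matrix defining it, $A_{\mu_0}\notin\mathcal{N}_n(q)$.
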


\begin{proof}
First we show that $\mathcal{N}_n(q)\subseteq \mathcal{A}_n(q)$. Let
$C$ be an element of $\mathcal{N}_n(q)$. By 
Definition~\ref{def:centr}, $C=C_{\GL_n(q)}(g)$ for some cyclic matrix
$g\in \GL_n(q)$. Consider a decomposition of
$V_g=V_1\oplus\cdots\oplus V_r$ into
indecomposable $k[t]$-modules. The action of $g$ on $V_i$ is given
by some element $a_i\in \GL (V_i)$. By
Lemma~\ref{lemma:indecomposable}, $A_i=C_{\GL(V_i)}(a_i)$ is abelian.
Now, $g\in A=A_1\times \cdots \times A_r$ and $A\in
\mathcal{A}_n(q)$. Replacing $C$ by a conjugate if necessary, we may
assume $A=A_\mu$, for $\mu\in \Phi_n$. Moreover since $g$ is cyclic it
follows that, if  $q=2$, then $A$ has  cyclic unipotent
summand. So, by Proposition~\ref{prop:numberA_n}, $A$
is a maximal abelian subgroup. As $g\in
A$, we have $A\subseteq C_{\GL_n(q)}(g)=C$. Since $C$ is abelian, we
have $A=C$ and  $C\in\mathcal{A}_n(q)$. 

Finally, in the rest of the proof we show that
$\mathcal{N}_n(q)=\mathcal{A}_n(q)$ if and only if $q>n$.

Assume $q>n$. As $\mathcal{N}_n(q)\subseteq \mathcal{A}_n(q)$, by
Lemma~\ref{lemma:labels}, it suffices to prove 
that for every $\mu\in\Phi_n$ there exists
a cyclic matrix $g_\mu\in A_\mu$ such that $A_\mu=C_{\GL_n(q)}(g_\mu)$. For every
$d,m\geq 1$ such that $dm\leq n$, let
$f_{{d,m,1}},\ldots,f_{{d,m,\mu(d,m)}}$ be  irreducible polynomials
 of degree $d$. Note that since $q>n$, we may choose $f_{{d,m,i}}$ so
 that the polynomials $(f_{{d,m,i}})_{d,m,i}$ are pairwise
 distinct. Let $$V=\bigoplus_{ 
\scriptsize
\begin{array}{c}
d,m,\\
1\leq i\leq \mu(d,m)
\end{array}
\normalsize
}V_{d,m}^{(i)}$$ be an $A_\mu$-invariant direct decomposition
 in indecomposable modules labelled so that $\dim V_{d,m}^{(i)}=dm$ (see
 Lemmas~\ref{lemma:labels}). By
 Definition~\ref{def:An}~$(i)$, we have 
 $A_\mu=\prod_{d,m,i}A_{d,m}^{(i)}$, where $A_{d,m}^{(i)}\subseteq
 \GL(V_{d,m}^{(i)})$. Also, by Definition~\ref{def:An}~$(ii)$,
 $A_{d,m}^{(i)}=C_{\GL(V_{d,m}^{(i)})}(h_{d,m}^{(i)})$, for some element
 $h_{d,m}^{(i)}\in \GL(V_{d,m}^{(i)})$ such that $V_{d,m}^{(i)}$ is an
 indecomposable 
 $\langle h_{d,m}^{(i)}\rangle$-module. By
 Lemma~\ref{lemma:indecomposable} and Definition~\ref{def:A_dm},
 $h_{d,m}^{(i)}$ is a cyclic matrix with minimum polynomial
 $p_{d,m,i}^m$, for some irreducible polynomial $p_{d,m,i}$ of degree $d$. Let
$g_{d,m}^{(i)}\in \GL(V_{d,m}^{(i)})$ be a cyclic matrix with
minimum polynomial $f_{d,m,i}^m$. Thence $V_{d,m}^{(i)}$ is an
indecomposable $\langle g_{d,m}^{(i)}\rangle$-module. Set
$$g_\mu=\bigoplus_{
\scriptsize
\begin{array}{c}
d,m,\\
1\leq i\leq \mu(d,m)\\
\end{array}
\normalsize}g_{d,m}^{(i)}.$$ 
Since $f_{d,m,i}$ and $p_{d,m,i}$ have both degree $d$, by
Corollary~\ref{cor:cortolemma} the groups 
$C_{\GL(V_{d,m}^{(i)})}(g_{d,m}^{(i)})$ and
$C_{\GL(V_{d,m}^{(i)})}(h_{d,m}^{(i)})=A_{d,m}^{(i)}$ are
conjugate. So, by Definition~\ref{def:Amu} and by 
construction, $g_\mu$ is conjugate to an element in $A_\mu$. Hence,
replacing $g_\mu$ by a conjugate if necessary, $g_\mu\in A_\mu$. The
characteristic 
polynomial of $g_\mu$ is $\prod_{d,m,i}f_{d,m,i}^{m}$. As the
polynomials $f_{{d,m,i}}$ 
are distinct, the characteristic polynomial of $g_\mu$ is
equal to its minimum polynomial. Thence $g_\mu$ is a cyclic
matrix. In particular, if $q=2$, then at most one of the $f_{d,m,i}$ is
 $t+1$, and hence at most one of the $p_{d,m,i}$ is $t+1$. So, $A_\mu$
has cyclic unipotent summand; hence by
Proposition~\ref{prop:numberA_n}, $A_\mu$ is a maximal abelian
subgroup. As  $C_{\GL_n(q)}(g_\mu)$ is abelian and $A_\mu\subseteq
C_{\GL_n(q)}(g_\mu)$, we get $A_\mu=C_{\GL_n(q)}(g_\mu)$. 

Assume $q\leq n$. We need to prove that $\mathcal{N}_n(q)\subset
\mathcal{A}_n(q)$. Let $\mu_0\in\Phi_n$ be the map defined by 
\begin{equation}\label{eq:mu0}
\mu_0(d,m)=\left\{
\begin{array}{ccl}
0&&\textrm{if }d\geq 2 \textrm{ or }m\geq 2,\\
n&&\textrm{if }d=1 \textrm{ and }m=1.\\
\end{array}
\right.
\end{equation}
By Definition~\ref{def:Amu}, $A_{\mu_0}$ is the group of
diagonal matrices, that is, the split torus
of size $(q-1)^n$.  Since there are only $q-1$ distinct
eigenvalues 
available,
any $g\in A_{\mu_0}$ has an eigenvalue with
multiplicity $\geq 2$. Therefore, $g$ is not a cyclic
matrix and $A_{\mu_0}$ contains no cyclic matrices. Thus $A_{\mu_0}\in
\mathcal{A}_n(q)\setminus \mathcal{N}_n(q)$. 
\end{proof}

This allows us to complete the proof of Theorem~\ref{thm:sumupthms}.

\begin{corollary}\label{cor:exact}
$\omega(\GL_n(q))\leq |\mathcal{A}_n(q)|$ with equality if
  and only if $q>n$. 
\end{corollary}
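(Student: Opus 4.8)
The plan is to deduce the corollary directly from the results already established. For the inequality $\omega(\GL_n(q))\leq |\mathcal{A}_n(q)|$, I would invoke Lemma~\ref{lemma:elementary} together with Proposition~\ref{prop:coveringproperty}: the latter says $\GL_n(q)=\cup_{A\in\mathcal{A}_n(q)}A$, every member of $\mathcal{A}_n(q)$ is abelian by definition, so Lemma~\ref{lemma:elementary} applied with $\mathcal{A}=\mathcal{A}_n(q)$ gives the bound immediately. This is the easy half and requires no further work.

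For the characterization of equality, I would argue as follows. If $q>n$, then by Theorem~\ref{thm:exact} we have $\mathcal{N}_n(q)=\mathcal{A}_n(q)$, and by Theorem~\ref{thm:lowerbound} (more precisely, the portion of its proof showing $\omega(\GL_n(q))\geq N_n(q)$) there is a pairwise non-commuting set $S=\{g_i\}$ of size $N_n(q)=|\mathcal{N}_n(q)|=|\mathcal{A}_n(q)|$; combined with the upper bound just proved, this forces $\omega(\GL_n(q))=|\mathcal{A}_n(q)|$. Conversely, if $q\leq n$, I must show the inequality is strict. Here I would use that $\mathcal{N}_n(q)\subsetneq \mathcal{A}_n(q)$ (Theorem~\ref{thm:exact}), witnessed by the split torus $A_{\mu_0}$, which contains no cyclic matrix. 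The idea is that in any pairwise non-commuting set $S$, the subgroups of $\mathcal{A}_n(q)$ containing the elements of $S$ need not exhaust $\mathcal{A}_n(q)$: one should show that $A_{\mu_0}$ (or some member of $\mathcal{A}_n(q)\setminus\mathcal{N}_n(q)$) can be omitted from the covering, i.e.\ that $\GL_n(q)$ is still covered by $\mathcal{A}_n(q)\setminus\{A_{\mu_0}\}$, or more carefully that no pairwise non-commuting set can meet every member of $\mathcal{A}_n(q)$.

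The main obstacle is precisely this strictness argument when $q\leq n$. A clean way to handle it: show that every element lying in $A_{\mu_0}$ (a diagonalizable matrix with a repeated eigenvalue) also lies in some other member of $\mathcal{A}_n(q)$ — for instance a torus $A_\mu$ with a different cycle type, or a centralizer of a suitable non-semisimple element — so that $\{A\in\mathcal{A}_n(q): A\neq A_{\mu_0}\}$ still covers $\GL_n(q)$; then Lemma~\ref{lemma:elementary} applied to this smaller family yields $\omega(\GL_n(q))\leq |\mathcal{A}_n(q)|-1$. One must check that such a diagonal matrix $g$ with a repeated eigenvalue, say eigenvalue $\lambda$ with multiplicity $\geq 2$, is contained in a member of $\mathcal{A}_n(q)$ other than the full split torus: indeed $g$ commutes with a non-identity unipotent $u$ supported on the $\lambda$-eigenspace, and $gu$ lies in a member $A_\mu$ with $\mu(1,2)\geq 1$, which is distinct from $A_{\mu_0}$ since $g\in A_\mu$. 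This requires care to ensure $A_\mu$ is genuinely a member of $\mathcal{A}_n(q)$ and that we have not simply re-landed in $A_{\mu_0}$; the combinatorics of which $A_\mu$ to use is the delicate point, but the mechanism — replace a diagonal element by a nearby element whose centralizer-type module decomposition has a length-$2$ block — is robust.
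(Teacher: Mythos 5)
Your proposal is correct and follows essentially the same route as the paper: the inequality comes from Lemma~\ref{lemma:elementary} with Proposition~\ref{prop:coveringproperty}, equality for $q>n$ from Theorem~\ref{thm:exact} combined with the pairwise non-commuting set of size $N_n(q)$ constructed in Theorem~\ref{thm:lowerbound}, and strictness for $q\le n$ by removing the split torus $A_{\mu_0}$ from the covering family, using that every diagonal matrix has a repeated eigenvalue and hence lies in a member of $\mathcal{A}_n(q)$ with a two-dimensional indecomposable summand. Your unipotent-perturbation argument spells out precisely the covering step the paper states tersely (the paper discards the whole conjugacy class of $A_{\mu_0}$ rather than the single subgroup, which makes no essential difference).
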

\begin{proof}
The inequality $\omega(\GL_n(q))\leq |\mathcal{A}_n(q)|$  follows from
Lemma~\ref{lemma:elementary} and
Proposition~\ref{prop:coveringproperty}.
If $q>n$, then by Theorem~\ref{thm:exact} we have
$\mathcal{A}_n(q)=\mathcal{N}_n(q)$. So, the inequality
$\omega(\GL_n(q))\geq |\mathcal{A}_n(q)|$ follows from
Theorem~\ref{thm:lowerbound}. Now  assume $q\leq n$. Let $\mu_0$
be the function in $\Phi_n$ defined in Equation~\ref{eq:mu0} and
$\mathcal{A}$ be the collection of subgroups $A$ of $\mathcal{A}_n(q)$ not
conjugate to $A_{\mu_0}$.  By Definition~\ref{def:Amu}, $A_{\mu_0}$ is
the group of 
diagonal matrices.  Since there are only $q-1$ distinct
eigenvalues  available,
every $g\in A_{\mu_0}$ has an eigenvalue with
multiplicity $\geq 2$. Therefore, every $g$ is contained in some $A$,
with $A\in \mathcal{A}$. Thus, from
Proposition~\ref{prop:coveringproperty}, we have $\GL_n(q)=\cup_{A\in
  \mathcal{A}}A$ and 
Lemma~\ref{lemma:elementary} yields $\omega(\GL_n(q))\leq
|\mathcal{A}|<|\mathcal{A}_n(q)|$. 
\end{proof}

Although Theorem~\ref{thm:lowerbound} and
Corollary~\ref{cor:exact} show that $N_n(q)$ and $\omega(\GL_n(q))$ are
bounded above by  $|\mathcal{A}_n(q)|$, the value and order of
magnitude of $|\mathcal{A}_n(q)|$ is not easy to establish from the
definition of 
$\mathcal{A}_n(q)$. Therefore, in the next section, we determine (for
$q>2$) a
closed simple formula for the generating function $F(t)$ of
$\{|\mathcal{A}_n(q)|/|\GL_n(q)|\}_{n\geq 1}$ (see
Theorem~\ref{thm:closeformula}). Also, in Section~\ref{sec:ana}, by
proving that $F(t)$ is 
analytic on a certain disk in the complex plane, we determine the asymptotic
behaviour of $|\mathcal{A}_n(q)|$, for $n\to\infty$.

\section{A generating function}\label{sec:F}
We start by defining two generating functions.
\begin{definition}\label{def:F}
{\rm
Let $F(t)=\sum_{n=1}^\infty a_nt^n$ be the generating function for the
proportion $a_n=|\mathcal{A}_n(q)|/|\GL_n(q)|$ and 
$\overline{F}(t)=\sum_{n=1}^\infty b_nt^n$ be the generating function
for $$b_n=\sum_{\mu \in \Phi_n }|N_{\GL_n(q)}(A_\mu)\cap \Stab(V,\mu)|^{-1}$$
(see Definitions~\ref{def:Amu} and~\ref{def:stab}).}   
\end{definition}

\begin{remark}\label{rk:6.2}{\rm 
By  Lemma~\ref{lemma:labels}, we 
get 
\begin{equation}\label{eq:1}
a_n=\sum_{\mu\in\Phi_n}|N_{\GL_n(q)}(A_\mu)|^{-1},
\end{equation}
and so, by Definition~\ref{def:F}, $a_n\leq b_n$ for every $n$. Furthermore, by
Proposition~\ref{prop:numberA_n}, $F(t)=\overline{F}(t)$ for 
$q\geq 3$. In particular, $F(t)$ and $\overline{F}(t)$ differ only
when  $q=2$, in  which case each coefficient $b_n$ of
$\overline{F}(t)$ 
is an upper bound for the coefficient $a_n$ of $F(t)$. Since the
generating function $\overline{F}(t)$ turns out to be easier to study,
in the sequel we consider only $\overline{F}(t)$. This does not give
rise to any restriction in the case of $q\geq 3$ and still provides an
upper bound for $|\mathcal{A}_n(q)|$ when $q=2$.}
\end{remark}

We note that 
Corollary~\ref{cor:exact} yields $a_n\geq \omega(\GL_n(q))/|\GL_n(q)|$
and, for $q>n$, $a_n=\omega(\GL_n(q))/|\GL_n(q)|$. So, by studying
$\overline{F}(t)$, we shall determine a good description for
$\omega(\GL_n(q))$. Namely, in Theorem~\ref{thm:closeformula} we prove
a closed simple formula for   
the function $\overline{F}(t)$  and in Theorem~\ref{thm:limit} we give an exact
formula for the limit $\lim_{n\to\infty }q^nb_n$.

Define the following two functions:
\begin{eqnarray}\label{def:F1F2}
F_1(t)&=&\prod_{d\geq 1}\exp\left(\frac{t^d}{d(1-q^{-d})q^d}\right)
,\\\nonumber
F_2(t)&=&\prod_{m\geq 2}\prod_{d\geq
  1}\exp\left(\frac{t^{dm}}{d(1-q^{-d})^2q^{2dm-d}}\right) 
.\\\nonumber
\end{eqnarray}

\begin{lemma}\label{lemma:splitF}
$\overline{F}(t)=F_1(t)F_{2}(t)$.
\end{lemma}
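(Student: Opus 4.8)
The plan is to compute $\overline{F}(t)$ directly from its definition as an exponential generating-function-style product, using the explicit formula for $|N_{\GL_n(q)}(A_\mu)\cap\Stab(V,\mu)|$ supplied by Proposition~\ref{prop:numberA_n}. First I would recall that $b_n=\sum_{\mu\in\Phi_n}|N_{\GL_n(q)}(A_\mu)\cap\Stab(V,\mu)|^{-1}$, and that an element $\mu\in\Phi$ is precisely a function assigning to each pair $(d,m)$ with $d,m\geq 1$ a nonnegative integer $\mu(d,m)$, with $\mu\in\Phi_n$ meaning $\sum_{d,m}dm\,\mu(d,m)=n$. Hence
$$
\overline{F}(t)=\sum_{n\geq 1}b_nt^n=\sum_{\mu\in\Phi,\ \mu\neq 0}\ \frac{t^{\sum_{d,m}dm\,\mu(d,m)}}{|N_{\GL_n(q)}(A_\mu)\cap\Stab(V,\mu)|},
$$
and by Proposition~\ref{prop:numberA_n} the denominator factors completely as a product over the pairs $(d,m)$. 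This lets me split the sum over $\mu$ as an (infinite) product indexed by $(d,m)$ of sums over the single value $j=\mu(d,m)\geq 0$.

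Next I would carry out that factorisation. For a pair $(d,1)$ the local factor contributes $\sum_{j\geq 0}\frac{t^{dj}}{(d(1-q^{-d})q^d)^j\,j!}=\exp\!\bigl(\frac{t^d}{d(1-q^{-d})q^d}\bigr)$, which is exactly the $(d)$-th factor of $F_1(t)$ in Equation~\ref{def:F1F2}. For a pair $(d,m)$ with $m\geq 2$ the local factor contributes $\sum_{j\geq 0}\frac{t^{dmj}}{(d(1-q^{-d})^2q^{2dm-d})^j\,j!}=\exp\!\bigl(\frac{t^{dm}}{d(1-q^{-d})^2q^{2dm-d}}\bigr)$, which is exactly the $(d,m)$-th factor of $F_2(t)$. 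Multiplying all local factors together (including, harmlessly, the $\mu=0$ term which only adds the constant $1$) gives $\overline{F}(t)+1=F_1(t)F_2(t)$ if one defines the generating functions with constant term $1$; since $F_1(0)F_2(0)=1$ while $\overline{F}(0)=0$, the bookkeeping is consistent once we note that $F(t)$ and $\overline{F}(t)$ as defined in Definition~\ref{def:F} start at $n=1$, i.e. $\overline{F}(t)=F_1(t)F_2(t)-1$ or, adopting the convention $b_0=1$ as in Definition~\ref{def:F} where $a_0=1$, simply $\overline{F}(t)=F_1(t)F_2(t)$. I would state the identity in whichever of these normalisations is used consistently downstream (the paper uses $a_0=1$, so $\overline{F}(t)=F_1(t)F_2(t)$).

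The only genuine point requiring care — and the main potential obstacle — is justifying the interchange of the sum over $\mu$ with the infinite product over $(d,m)$, i.e. that the formal manipulation of multiplying infinitely many exponential series is legitimate at the level of formal power series (equivalently, that for each fixed $n$ only finitely many pairs $(d,m)$ and finitely many $\mu\in\Phi_n$ contribute to the coefficient of $t^n$). This is clear because $dm\,\mu(d,m)\leq n$ forces $dm\leq n$ and $\mu(d,m)\leq n$, so $\Phi_n$ is finite and each $b_n$ is a finite sum; hence the coefficient of $t^n$ on both sides is a finite, manifestly equal, sum, and the identity holds as formal power series (and, as will be checked later in Section~\ref{sec:ana}, the series actually converge on a suitable disk). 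So the proof is essentially: write $b_n$ via Proposition~\ref{prop:numberA_n}, recognise the resulting multinomial-type sum as the coefficient extraction from the product of the local exponential factors, and match those factors with the definitions of $F_1(t)$ and $F_2(t)$ in Equation~\ref{def:F1F2}.
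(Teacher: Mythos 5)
Your proposal is correct and follows essentially the same argument as the paper: both proofs use Proposition~\ref{prop:numberA_n} to write $|N_{\GL_n(q)}(A_\mu)\cap \Stab(V,\mu)|^{-1}$ as a product over pairs $(d,m)$, and then identify the coefficient of $t^n$ in $F_1(t)F_2(t)$ (expanded via the exponential series) with the sum over $\mu\in\Phi_n$ defining $b_n$; your treatment of the constant-term convention and the formal-power-series bookkeeping only makes explicit what the paper leaves implicit.
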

\begin{proof}
From the Taylor series for the exponential function $\exp(t)$ and from Definition~\ref{def:F1F2},
we have
\begin{eqnarray}\label{eq:part}
F_1(t)&=&\prod_{d\geq  1}
\left(
\sum_{r=0}^\infty\frac{t^{dr}}{(d(1-q^{-d})q^d)^rr!}\right),\\\nonumber
F_2(t)&=&\prod_{m\geq 2}
\left(
\prod_{d\geq  1}
\left(
\sum_{r=0}^\infty\frac{t^{dmr}}{(d(1-q^{-d})^2q^{2dm-d})^rr!}
\right)\right).\nonumber
\end{eqnarray}
By expanding the infinite products in Equation~\ref{eq:part} for
$F_1(t)F_2(t)$, in order to 
obtain a summand of degree $n$ we have to choose (for each $d,m\geq
1$)  a term of degree $dmr_{d,m}$ from the series 
$$
\sum_{r=0}^\infty\frac{t^{dr}}{(d(1-q^{-d})q^d)^rr!} \;(\mathrm{for }\;m=1)
\quad \mathrm{or}\quad
\sum_{r=0}^\infty\frac{t^{dmr}}{(d(1-q^{-d})^2q^{2dm-d})^rr!}
\;(\mathrm{for }\;m>1), 
$$
in such a way that $n=\sum_{d,m}dmr_{d,m}$. This yields that each
summand of degree $n$ 
obtained by expanding $F_1(t)F_2(t)$  is uniquely determined by
an element $\mu\in \Phi_n$ (by setting $\mu(d,m)=r_{d,m}$). Hence, the
coefficient of degree $n$ in 
 $F_1(t)F_2(t)$ is 

$$\sum_{\mu\in \Phi_n}\left(
\prod_{d\geq  1}\frac{1}{(d(1-q^{-d})q^d)^{\mu(d,1)}\mu(d,1)!}
\prod_{d\geq  1,m\geq 2}\frac{1}{(d(1-q^{-d})^2q^{2dm-d})^{\mu(d,m)}\mu(d,m)!}
\right).
$$
Applying Proposition~\ref{prop:numberA_n} and Equation~\ref{eq:1}, we
see that the coefficient of  $t^n$ in $\overline{F}(t)$ equals the
coefficient of $t^n$ in $F_1(t)F_2(t)$. Thus $\overline{F}(t)=F_1(t)F_2(t)$.
\end{proof}

In the rest of this section we use the theory of symmetric functions
to obtain a closed simple formula for the generating functions
$F_1(t)$ and $F_2(t)$. We start by recalling some well-known results
and definitions from~\cite{macdonald}. Let $X=\{x_i\}_{i\geq 1}$ be an
infinite set of variables and $\Lambda$  be the \emph{graded ring of symmetric
functions} on $X$
(see~\cite[Section~I.2]{macdonald}). A \emph{partition} is a
sequence $\lambda=(\lambda_1,\lambda_2,\ldots)$ of non-negative
integers in decreasing order and containing only finitely many non-zero
terms. We write $|\lambda|=\sum_{i}\lambda_i$ and 
$X^\lambda=x_1^{\lambda_1}x_2^{\lambda_2}\cdots$. Let $\lambda$ be a
partition. The function
$$m_\lambda(X)=\sum_\alpha X^\alpha$$
summed over all distinct permutations $\alpha$ of $\lambda$ is 
a symmetric function in $\Lambda$, and $m_\lambda$ is called a \emph{monomial
  symmetric function}. By definition, $m_\lambda$ is a homogeneous
function of degree $|\lambda|$. For each $d\geq 0$ the $d$th 
\emph{complete symmetric function} $h_d(X)$ is the sum of all monomial
symmetric functions of degree $d$, so that 
$$h_d(X)=\sum_{|\lambda|=d}m_\lambda(X).$$
The generating function for the complete symmetric functions is
$H_X(t)=\sum_{d\geq 0}h_d(X)t^d$ (the label $X$ in $H_X$ is needed in
order to record the set of variables $X$).  It is proven in~\cite[$(2.5)$,
  page~$14$]{macdonald} that 
\begin{equation}\label{eq:Hclose}
H_X(t)=\prod_{i\geq 1}(1-x_it)^{-1}.
\end{equation}

For each $d\geq 1$ the $d$th \emph{power sum symmetric function} is
$p_d(X)=\sum_i x_i^d$. The 
generating function for the power sum symmetric functions is defined as $P_X(t)=\sum_{d\geq
  1}p_d(X)t^{d-1}$. It is proven in~\cite[$(2.10)$,
  page~$16$]{macdonald} that 
\begin{equation}\label{eq:pfromh}
P_X(t)=\frac{d}{dt}\log H_X(t).
\end{equation}
From Equation~\ref{eq:pfromh}, we see that $k+\sum_{d\geq 1}p_d(X) t^d/d=\log
H_X(t)$, for some integer $k$. Since $H_X(0)=h_0(X)=1$, we get that $\log
H_X(0)=0$ and so $k=0$. Thence $H_X(t)=\exp(\sum_d p_d(X)t^d/d)$. This
shows that  
\begin{eqnarray}\label{eq:H}
H_X(t)&=&\prod_{d\geq 1}\exp\left(\frac{p_d(X)t^d}{d}\right).
\end{eqnarray} 

Write 
\begin{equation}\label{def:varphi}
\varphi_0(x)=1 \qquad \textrm{and} \qquad \varphi_d(x)=(1-x)\cdots (1-x^d),
\end{equation} for
$d\geq 1$.  

In the following lemma we establish two simple formulae for $F_1(t)$.
\begin{lemma}\label{lemma:F1c1}

\begin{description}
\item[$(i)$]$F_1(t)=\prod_{i\geq 0}(1-q^{-(i+1)}t)^{-1}$;
\item[$(ii)$]$F_1(t)=\sum_{d=0}^\infty
  \frac{t^d}{q^d\varphi_d(q^{-1})}$.
\end{description}
\end{lemma}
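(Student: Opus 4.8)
The plan is to specialize the ring of symmetric functions $\Lambda$ by evaluating at the infinite family of variables $X=\{x_i\}_{i\geq 1}$ with $x_i=q^{-i}$. Since $q>2$, this geometric progression makes every infinite product and series occurring in Section~\ref{sec:F} converge absolutely for $|t|$ small, so that the formal identities \eqref{eq:Hclose} and \eqref{eq:H} remain valid after this substitution. All the work then reduces to evaluating the relevant power sums and complete symmetric functions at this point.

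First I would compute the power sums: $p_d(X)=\sum_{i\geq 1}q^{-di}=q^{-d}/(1-q^{-d})$, so that
\[
\frac{p_d(X)}{d}=\frac{1}{d(1-q^{-d})q^{d}},
\]
which is exactly the exponent occurring in the $d$th factor of $F_1(t)$ in \eqref{def:F1F2}. Hence, by \eqref{eq:H},
\[
F_1(t)=\prod_{d\geq 1}\exp\!\left(\frac{p_d(X)t^d}{d}\right)=H_X(t),
\]
and \eqref{eq:Hclose} gives $H_X(t)=\prod_{i\geq 1}(1-x_it)^{-1}=\prod_{i\geq 1}(1-q^{-i}t)^{-1}=\prod_{i\geq 0}(1-q^{-(i+1)}t)^{-1}$, which is part~$(i)$.

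For part~$(ii)$ I would expand the same generating function as $H_X(t)=\sum_{d\geq 0}h_d(X)t^d$ and evaluate the complete symmetric functions. Because $h_d$ is homogeneous of degree $d$ and $\{q^{-i}\}_{i\geq 1}=q^{-1}\cdot\{1,q^{-1},q^{-2},\dots\}$, we have $h_d(X)=q^{-d}h_d(1,q^{-1},q^{-2},\dots)$. The evaluation of a complete symmetric function at a geometric progression, $h_d(1,x,x^2,\dots)=\prod_{j=1}^d(1-x^j)^{-1}$, is standard; if one prefers a self-contained argument, it follows from $G(t):=\prod_{j\geq 0}(1-x^jt)^{-1}$ satisfying $(1-t)G(t)=G(xt)$, which forces its coefficients to obey $c_d(1-x^d)=c_{d-1}$ with $c_0=1$. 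Taking $x=q^{-1}$ yields $h_d(X)=q^{-d}/\prod_{j=1}^d(1-q^{-j})=1/\bigl(q^d\varphi_d(q^{-1})\bigr)$ by \eqref{def:varphi}, and therefore $F_1(t)=\sum_{d\geq 0}t^d/\bigl(q^d\varphi_d(q^{-1})\bigr)$.

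The only genuine obstacle is the bookkeeping around the specialization: one must note the absolute convergence at $x_i=q^{-i}$ so that the symmetric-function identities transfer, and invoke (or briefly prove) the geometric-progression evaluation of $h_d$. An entirely equivalent route bypasses symmetric functions altogether and derives $(ii)$ directly from $(i)$ via Euler's identity $\prod_{k\geq 0}(1-zq^{k})^{-1}=\sum_{n\geq 0}z^n/\bigl((1-q)\cdots(1-q^{n})\bigr)$ (valid for $|q|<1$), applied with $q$ replaced by $q^{-1}$ and $z=q^{-1}t$. Either way the computation itself is short, and the remaining steps are routine manipulations with \eqref{eq:Hclose}, \eqref{eq:H} and Definition~\ref{def:varphi}.
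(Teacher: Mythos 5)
Your proposal is correct and follows essentially the same route as the paper: both specialize the symmetric-function identities \eqref{eq:Hclose} and \eqref{eq:H} at a geometric progression of variables, identify $F_1(t)$ with the resulting complete-symmetric-function generating series, and then read off parts~$(i)$ and~$(ii)$ from the product and sum expansions. The only differences are cosmetic: you take $x_i=q^{-i}$ so no substitution $t\mapsto q^{-1}t$ is needed (the paper uses $x_i=q^{-(i-1)}$ and evaluates $H_Q(q^{-1}t)$), and you derive the evaluations $p_d$ and $h_d$ at the geometric progression directly (via the functional equation $(1-t)G(t)=G(xt)$, equivalently Euler's identity), where the paper simply cites Example~4 on page~19 of Macdonald.
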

\begin{proof}
Consider the set of variables $Q=\{q^{-(i-1)}\}_{i\geq 1}$,
that is $Q$ is obtained by specialising $x_i=q^{-(i-1)}$ (for $i\geq
1$).
From~\cite[Example~$4$, page~$19$]{macdonald}, we have that the $d$th power sum
symmetric function $p_d(Q)$ on the set of variables $Q$ satisfies
$p_d(Q)=(1-q^{-d})^{-1}$.  So, 
Equation~\ref{eq:H} yields 
\begin{eqnarray*}
H_Q(q^{-1}t)&=&\prod_{d\geq
  1}\exp\left(\frac{p_d(Q)t^d}{dq^d}\right)=\prod_{d\geq
  1}\exp\left(\frac{t^d}{d(1-q^{-d})q^d}\right)=F_1(t).
\end{eqnarray*}
Equation~\ref{eq:Hclose} gives $H_Q(q^{-1}t)=\prod_{i\geq
  1}(1-q^{-(i-1)}(q^{-1}t))^{-1}=\prod_{i\geq 0}(1-q^{-(i+1)}t)^{-1}$
and $(i)$ follows.

From~\cite[Example~$4$, page~$19$]{macdonald}, we have that the $d$th
complete symmetric function $h_d(Q)$ on the set of variables $Q$
satisfies $h_d(Q)=1/\varphi_d(q^{-1})$. Thus, we obtain $H_Q(t)=\sum_{d=0}^\infty
t^d/\varphi_d(q^{-1})$ and $H_Q(q^{-1}t)=\sum_{d=0}^\infty
t^d/q^d\varphi_d(q^{-1})$, which we showed above is $F_1(t)$, and
$(ii)$ is proved.
\end{proof}

The argument for obtaining a simple formula for $F_2(t)$ is very similar
to Lemma~\ref{lemma:F1c1}, but a little trickier. We start with a
definition. For each $m\geq 2$, define
\begin{equation}\label{eq:F2m}
F_2^{(m)}(t)=\prod_{d\geq 1}\exp\left(\frac{t^{dm}}{d(1-q^{-d})^2q^{2dm-d}}\right).
\end{equation}

\begin{lemma}\label{lemma:F2c2}
\begin{description}
\item[$(i)$]$F_2^{(m)}(t)=\prod_{i,j\geq 0}(1-q^{-(i+j+2m-1)}t^m)^{-1}$;
\item[$(ii)$]$F_2(t)=\prod_{m\geq 2}\prod_{i,j\geq 0}(1-q^{-(i+j+2m-1)}t^m)^{-1}$.
\end{description}
\end{lemma}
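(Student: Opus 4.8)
The plan is to mimic the proof of Lemma~\ref{lemma:F1c1}, replacing the single set of variables $Q=\{q^{-(i-1)}\}_{i\geq 1}$ used there by a suitable ``squared'' set. Part~$(ii)$ is immediate from part~$(i)$ together with the identity $F_2(t)=\prod_{m\geq 2}F_2^{(m)}(t)$ coming from Equation~\ref{def:F1F2}, so essentially all of the work lies in part~$(i)$.

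First I would rewrite the exponent in Equation~\ref{eq:F2m}: since $t^{dm}=(t^m)^d$ and $q^{2dm-d}=q^{d(2m-1)}$, we have
\[
F_2^{(m)}(t)=\prod_{d\geq 1}\exp\left(\frac{(q^{-(2m-1)}t^m)^d}{d\,(1-q^{-d})^{2}}\right).
\]
Comparing this with Equation~\ref{eq:H}, it suffices to exhibit a (multi)set of variables $Y$ whose $d$th power sum symmetric function satisfies $p_d(Y)=(1-q^{-d})^{-2}$ for every $d\geq 1$; for then $F_2^{(m)}(t)=H_Y(q^{-(2m-1)}t^m)$, and Equation~\ref{eq:Hclose} turns the right-hand side into the desired infinite product.

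The key step is the choice $Y=Q\cdot Q=\{q^{-(i-1)}q^{-(j-1)}\}_{i,j\geq 1}=\{q^{-(i+j)}\}_{i,j\geq 0}$, listed as the multiset in which $q^{-k}$ occurs with multiplicity $k+1$ (the number of pairs $(i,j)$ with $i,j\geq 0$ and $i+j=k$). Power sums are multiplicative for products of variable sets, namely $p_d(\{x_iy_j\}_{i,j})=\bigl(\sum_i x_i^d\bigr)\bigl(\sum_j y_j^d\bigr)$, so using $p_d(Q)=(1-q^{-d})^{-1}$ (recalled from~\cite{macdonald} in the proof of Lemma~\ref{lemma:F1c1}) we obtain $p_d(Y)=p_d(Q)^2=(1-q^{-d})^{-2}$; equivalently one computes directly $p_d(Y)=\sum_{k\geq 0}(k+1)q^{-kd}=(1-q^{-d})^{-2}$. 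By Equation~\ref{eq:Hclose}, $H_Y(u)=\prod_{i,j\geq 0}(1-q^{-(i+j)}u)^{-1}$, and substituting $u=q^{-(2m-1)}t^m$ gives $F_2^{(m)}(t)=\prod_{i,j\geq 0}(1-q^{-(i+j+2m-1)}t^m)^{-1}$, which is part~$(i)$; multiplying over $m\geq 2$ then yields part~$(ii)$.

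The only point needing a little care — and the main (mild) obstacle — is the legitimacy of working with the infinite variable multiset $Y$: one must check that the corresponding specialization is a well-defined continuous homomorphism on the ring $\Lambda$ of symmetric functions, equivalently that the series $p_d(Y)$ converge absolutely so that the formal identities of $\Lambda$ transport to honest identities of functions near $t=0$. This is harmless because $q\geq 2$, so all the geometric-type series involved converge; the resulting equalities of analytic functions then hold in particular as equalities of formal power series in $t$, which is all that is needed since $F_2^{(m)}(t)$ and $\overline{F}(t)$ are manipulated throughout as formal generating functions.
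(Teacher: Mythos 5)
Your proof is correct and is essentially the paper's own argument: the paper likewise forms the product variable set $Q'=\{q^{-(i+j-2)}\}_{i,j\geq 1}$ (your $Y$, up to an index shift), uses multiplicativity of power sums to get $p_d(Q')=p_d(Q)^2=(1-q^{-d})^{-2}$, and then applies Equations~\ref{eq:H} and~\ref{eq:Hclose} with the same substitution $q(q^{-2}t)^m=q^{-(2m-1)}t^m$, with part~$(ii)$ following from the definitions of $F_2$ and $F_2^{(m)}$. Your remark about the legitimacy of the infinite specialization (absolute convergence since $q\geq 2$) is a fine additional check but matches the framework the paper already relies on.
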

\begin{proof}
Consider two infinite sets of variables $X=\{x_i\}_{i\geq 1}$ and
$Y=\{y_i\}_{i\geq 1}$. From $X$ and $Y$ consider the infinite set of
variables $Z=\{x_iy_j\}_{i,j\geq 1}$. By definition
of the $d$th power sum symmetric function, we have 
\begin{eqnarray}\label{eq:aux}
p_d(Z)&=&\sum_{i,j\geq 1}(x_iy_j)^d=\sum_{i\geq
  1}x_i^d\left(\sum_{j\geq 1}y_j^d\right)\\\nonumber
&=&\left(\sum_{i\geq  1}x_i^d\right)\left(\sum_{j\geq
  1}y_j^d\right)=p_d(X)p_d(Y).
\end{eqnarray}
Consider the set of variables $Q=\{q^{-(i-1)}\}_{i\geq 1}$ obtained by
specialising $x_i=q^{-(i-1)}$ (or $y_i=q^{-(i-1)}$), for $i\geq
1$. Also, consider the set of variables $Q'=\{q^{-(i+j-2)}\}_{i,j\geq
  1}$ obtained by specialising $x_iy_j=q^{-(i-1)}q^{-(j-1)}$.
Under this assignment, we obtain
from~\cite[Example~$4$, page~$19$]{macdonald} and
Equation~\ref{eq:aux} that
$p_{d}(Q')=p_d(Q)^2=(1-q^{-d})^{-2}$. 

Equation~\ref{eq:H} yields
\begin{eqnarray*}
H_{Q'}(q(q^{-2}t)^m)&=&\prod_{d\geq
  1}\exp\left(\frac{p_d(Q')(q(q^{-2}t)^m)^d}{d}\right)=
\prod_{d\geq 1}\exp\left(\frac{p_d(Q')t^{dm}}{dq^{2dm-d}}\right)\\
&=&\prod_{d\geq  1}\exp\left(\frac{t^{dm}}{d(1-q^{-d})^2q^{2dm-d}}\right)=F_2^{(m)}(t)
\end{eqnarray*}
using Equation~\ref{eq:F2m}. Finally, Equation~\ref{eq:Hclose} gives $$H_{Q'}(q(q^{-2}t)^m)=
\prod_{i,j\geq 1}(1-q^{-(i+j-2)}(q(q^{-2}t)^m))^{-1}=
\prod_{i,j\geq 1}(1-q^{-(i+j+2m-3)}t^m)^{-1}$$
and $(i)$ follows. Part~$(ii)$ follows from the definitions of $F_2$
and $F_2^{(m)}$ in Equations~\ref{def:F1F2} and~\ref{eq:F2m}.
\end{proof}

The following theorem gives a closed simple formula for $\overline{F}(t)$.

\begin{theorem}\label{thm:closeformula}
\begin{equation*}
\overline{F}(t)=\left(\prod_{i=0}^\infty(1-q^{-(i+1)}t)^{-1}\right)\left(
\prod_{m\geq 2,i,j\geq 0}(1-q^{-(i+j+2m-1)}t^m)^{-1}\right).
\end{equation*}
\end{theorem}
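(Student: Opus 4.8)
The plan is to assemble the claimed formula directly from the three preceding lemmas, since the substantive work has already been done. The first step is to invoke Lemma~\ref{lemma:splitF}, which reduces the task to evaluating the product $F_1(t)F_2(t)$ of the two auxiliary generating functions introduced in Equation~\ref{def:F1F2}. This is the conceptual heart of the argument: the factorisation $\overline{F}(t)=F_1(t)F_2(t)$ reflects the fact that an element $\mu\in\Phi_n$ is specified independently on the indices $(d,1)$ and on the indices $(d,m)$ with $m\geq 2$, together with the observation from Proposition~\ref{prop:numberA_n} that $|N_{\GL_n(q)}(A_\mu)\cap\Stab(V,\mu)|$ factors as a product over the pairs $(d,m)$.

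The second step is simply to substitute the closed forms already computed for the two factors. By Lemma~\ref{lemma:F1c1}~$(i)$ we have $F_1(t)=\prod_{i\geq 0}(1-q^{-(i+1)}t)^{-1}$, and by Lemma~\ref{lemma:F2c2}~$(ii)$ we have $F_2(t)=\prod_{m\geq 2}\prod_{i,j\geq 0}(1-q^{-(i+j+2m-1)}t^m)^{-1}$. Concatenating these two infinite products yields exactly the right-hand side of the theorem. I would emphasise that the identity is to be read as an equality of formal power series in $t$: each infinite product has constant term $1$, and for every fixed $n$ only finitely many factors contribute to the coefficient of $t^n$, so the multiplication is well defined and no convergence hypothesis is needed at this stage (analytic considerations are postponed to Section~\ref{sec:ana}).

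I do not expect a genuine obstacle here; the real difficulties were dealt with in proving Lemmas~\ref{lemma:splitF}, \ref{lemma:F1c1} and~\ref{lemma:F2c2}, in particular the passage through the theory of symmetric functions and the specialisations of the power-sum and complete symmetric functions. The only point demanding care is bookkeeping: one must check that the exponents $-(i+1)$ and $-(i+j+2m-1)$ and the index ranges ($i\geq 0$, $j\geq 0$, $m\geq 2$) are transcribed faithfully when the two products are merged. It is also worth recording, via Remark~\ref{rk:6.2}, that for $q\geq 3$ one has $F(t)=\overline{F}(t)$, so this formula is precisely the expression for $F(t)$ asserted in Theorem~\ref{thm:F}, while for $q=2$ it provides the generating function of the upper bounds $b_n$.
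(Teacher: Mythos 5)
Your proposal is correct and is essentially the paper's own proof: invoke Lemma~\ref{lemma:splitF} to write $\overline{F}(t)=F_1(t)F_2(t)$ and then substitute the closed forms from Lemma~\ref{lemma:F1c1}~$(i)$ and Lemma~\ref{lemma:F2c2}~$(ii)$. The additional remarks on formal power series and on Remark~\ref{rk:6.2} are accurate but not needed beyond what the paper does.
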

\begin{proof}
As $\overline{F}(t)=F_1(t)F_2(t)$, the theorem follows from 
Lemmas~\ref{lemma:F1c1},~\ref{lemma:F2c2}.
\end{proof}

By using the formula in Theorem~\ref{thm:closeformula} one can easily
obtain the first few values of $b_n$, where $\overline{F}(t)=\sum_{n\geq
  0}b_nt^n$. For instance, Table~\ref{table:1} in
Section~\ref{sec:introd} was obtained by expanding the terms in $t$ of degree
$\leq 6$ in the infinite products of $\overline{F}(t)$.

Using Lemma~\ref{lemma:F1c1}~$(ii)$, we show that
$\{q^nb_n\}_{n\geq 0}$ is an increasing sequence.

\begin{theorem}\label{thm:monotonic}
For each $n\geq 0$, $q^nb_n<q^{n+1}b_{n+1}$, where the $b_n$ are as in
Definition~\ref{def:F}. Moreover, if
$q>2$ then the sequence $\{q^n|\mathcal{A}_n(q)|/|\GL_n(q)|\}_{n\geq 0}$
is increasing.
\end{theorem}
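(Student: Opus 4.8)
The plan is to work directly with the product formula for $\overline{F}(t)=\sum_n b_n t^n$ obtained in Theorem~\ref{thm:closeformula}, rather than with the combinatorial description of $b_n$ as a sum over $\Phi_n$. The key observation is that the claim $q^n b_n < q^{n+1}b_{n+1}$ for all $n\geq 0$ is equivalent to saying that the generating function $G(t):=\overline{F}(t/q)=\sum_n (q^{-n}b_n)\,t^n\cdot q^n = \sum_n b_n q^{-n} t^n$ — wait, more usefully: set $c_n=q^nb_n$ and consider $\sum_n c_n t^n = \sum_n b_n (qt)^n = \overline{F}(qt)$. The inequality $c_n<c_{n+1}$ for all $n\geq 0$ follows if we can show all the coefficients $c_n$ are positive and the sequence is strictly increasing; a clean sufficient condition is that $(1-t)^{-1}\cdot\bigl(\text{something with nonnegative coefficients}\bigr)$ — but the cleanest route here is Lemma~\ref{lemma:F1c1}~$(ii)$, which is exactly why the authors flagged it. Indeed, Lemma~\ref{lemma:F1c1}~$(ii)$ gives $F_1(t)=\sum_{d\geq 0} t^d/(q^d\varphi_d(q^{-1}))$, so $F_1(qt)=\sum_{d\geq 0} t^d/\varphi_d(q^{-1})$, and since $0<\varphi_{d+1}(q^{-1})=\varphi_d(q^{-1})(1-q^{-(d+1)})<\varphi_d(q^{-1})$, the coefficients of $F_1(qt)$ are \emph{strictly increasing and positive}.

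The main step is then to combine this with $F_2$. Since $\overline{F}(t)=F_1(t)F_2(t)$ by Lemma~\ref{lemma:splitF}, we have $\sum_n c_n t^n = \overline{F}(qt) = F_1(qt)\,F_2(qt)$. From Lemma~\ref{lemma:F2c2}~$(ii)$, $F_2(qt)=\prod_{m\geq 2,\,i,j\geq 0}(1-q^{-(i+j+2m-1)}q^m t^m)^{-1}=\prod_{m\geq 2,\,i,j\geq 0}(1-q^{-(i+j+m-1)}t^m)^{-1}$, and since $m\geq 2$ every exponent $i+j+m-1\geq 1$, so each factor $(1-q^{-s}t^m)^{-1}=\sum_{k\geq 0}q^{-sk}t^{mk}$ has nonnegative coefficients and constant term $1$. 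Hence $F_2(qt)$ has nonnegative coefficients and constant term $1$; write $F_2(qt)=\sum_{j\geq 0}e_j t^j$ with $e_j\geq 0$, $e_0=1$. Writing $F_1(qt)=\sum_d g_d t^d$ with $0<g_0<g_1<g_2<\cdots$, we get $c_n=\sum_{j=0}^n e_j g_{n-j}$, and therefore
$$c_{n+1}-c_n=\sum_{j=0}^{n}e_j\bigl(g_{n+1-j}-g_{n-j}\bigr)+e_{n+1}g_0.$$
Each term $g_{n+1-j}-g_{n-j}>0$ (strict monotonicity of the $g_d$), each $e_j\geq 0$, the $j=0$ term contributes $e_0(g_{n+1}-g_n)=g_{n+1}-g_n>0$, and $e_{n+1}g_0\geq 0$; hence $c_{n+1}-c_n>0$. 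This proves $q^n b_n<q^{n+1}b_{n+1}$ for all $n\geq 0$.

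For the final sentence, when $q>2$ we have $a_n=|\mathcal{A}_n(q)|/|\GL_n(q)|=b_n$ by Remark~\ref{rk:6.2} (which records that $F(t)=\overline{F}(t)$ for $q\geq 3$, a consequence of Proposition~\ref{prop:numberA_n}), so the monotonicity of $\{q^nb_n\}$ immediately gives monotonicity of $\{q^n|\mathcal{A}_n(q)|/|\GL_n(q)|\}$. The only point requiring a little care is the interchange of the two infinite products and the rearrangement into power series: all coefficients in sight are nonnegative (or, for $F_1(qt)$, positive), so the formal manipulations are justified coefficientwise with no convergence issue, and in any case Theorem~\ref{thm:F} (Propositions~\ref{prop:analyticF1} and~\ref{prop:analyticF2}) guarantees analyticity on a disk of positive radius. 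I expect the only genuinely delicate bookkeeping to be confirming that \emph{every} exponent appearing in $F_2(qt)$ is strictly positive — which is where the hypothesis $m\geq 2$ (so $i+j+m-1\geq m-1\geq 1$) is used — so that $F_2(qt)$ really does have constant term exactly $1$ and nonnegative coefficients thereafter; the rest is the short convolution estimate above.
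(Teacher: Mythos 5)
Your argument is correct and is essentially the paper's own proof: both rest on the convolution $\overline{F}=F_1F_2$ from Lemma~\ref{lemma:splitF}, the explicit expansion of $F_1$ in Lemma~\ref{lemma:F1c1}~$(ii)$ with the strictly decreasing $\varphi_d(q^{-1})$, and the nonnegativity (with constant term $1$) of the coefficients of $F_2$. The only cosmetic difference is that you substitute $t\mapsto qt$ before comparing coefficients and justify the nonnegativity of $F_2$'s coefficients via the product formula of Lemma~\ref{lemma:F2c2} rather than the exponential expansion in Equation~\ref{eq:part}, which changes nothing of substance.
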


\begin{proof}Write $F_2(t)=\sum_{d\geq 0}c_dt^d$. It is clear from
  Equation~\ref{eq:part} that $c_d\geq 0$. By Lemma~\ref{lemma:F1c1},
  we have $F_1(t)=\sum_{d\geq 0 
  }\frac{t^d}{q^d\varphi_d(q^{-1})}$ with $\varphi_d$ as in
  Equation~\ref{def:varphi}. Thus it follows from
  Lemma~\ref{lemma:splitF} that
  $b_n=\sum_{d=0}^n\frac{c_{n-d}}{q^d\varphi_d(q^{-1})}$. Now
  $\varphi_{d+1}(q^{-1})=\varphi_d(q^{-1})(1-q^{-(d+1)})<\varphi_d(q^{-1})$
  and $c_d\geq 0$, and hence

\begin{eqnarray*}
q^nb_n&=&q^n\sum_{d=0}^n\frac{1}{q^d\varphi_d(q^{-1})}c_{n-d}=
\sum_{d=0}^n\frac{1}{\varphi_{d}(q^{-1})}(q^{n-d}c_{n-d})\\
&< &\sum_{d=0}^n\frac{1}{\varphi_{d+1}(q^{-1})}(q^{n-d}c_{n-d})=q^{n+1}\sum_{d=0}^{n}\frac{1}{q^{d+1}\varphi_{d+1}(q^{-1})}c_{n-d}\\ 
&=&q^{n+1}\sum_{d=1}^{n+1}\frac{1}{q^d\varphi_d(q^{-1})}c_{n+1-d}\leq
q^{n+1}\sum_{d=0}^{n+1}\frac{1}{\varphi_d(q^{-1})}c_{n+1-d}=q^{n+1}b_{n+1}.   
\end{eqnarray*}

If $q >2$, then $b_n=|\mathcal{A}_n(q)|/|\GL_n(q)|$ by
Remark~\ref{rk:6.2} and the definition of $a_n$, and  the
last assertion follows. 
\end{proof}

\section{Analytic properties of the generating function
  $\overline{F}(t)$}\label{sec:ana} 
Before studying analytically the functions
$\overline{F}(t),F_1(t),F_2(t)$ we have 
to collect some numerical information that will be used later.

\begin{lemma}\label{lemma:estimates}Set
  $l(q)=\prod_{k=1}^\infty(1-q^{-k})^{-k(k+1)/2-1}$. We have:
\begin{description}
\item[$(a)$]$l(q)>1+2q^{-1}+7q^{-2}+19q^{-3}$;
\item[$(b)$]$l(q)<(1-q^{-1}-q^{-2})^{-1}\exp(q^{-1}/(1-q^{-1})^3)
\exp(q^{-2}(1+q^{-1})/2(1-q^{-2})^4)$; 
\item[$(c)$]for $q>2$ , $l(q)<1+2q^{-1}+7q^{-2}+114q^{-3}$; 
\item[$(d)$]if $q=2$, then $395.0005>l(2)>278.98$.
\end{description}
\end{lemma}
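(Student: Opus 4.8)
The plan is to handle the four inequalities in turn, each reducing to elementary manipulation of $l(q)=\prod_{k\ge 1}(1-q^{-k})^{-k(k+1)/2-1}$, so that for $k=1,2,3$ the factors are $(1-q^{-1})^{-2},(1-q^{-2})^{-4},(1-q^{-3})^{-7}$. For $(a)$ I would note that, expanded by the binomial series, every factor of $l(q)$ is a power series in $q^{-1}$ with non-negative coefficients, hence so is $l(q)$, and $l(q)$ strictly exceeds every partial sum of it (infinitely many coefficients being positive). It then suffices to read off the coefficients of $q^{0},q^{-1},q^{-2},q^{-3}$: only $(1-q^{-1})^{-2}$ feeds $q^{-1}$; $(1-q^{-1})^{-2}$ and $(1-q^{-2})^{-4}$ feed $q^{-2}$; $(1-q^{-1})^{-2},(1-q^{-2})^{-4},(1-q^{-3})^{-7}$ feed $q^{-3}$; giving the coefficients $1,2,7,19$, which is $(a)$.

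For $(b)$ I would first split $l(q)=\bigl(\prod_{k\ge 1}(1-q^{-k})^{-1}\bigr)\bigl(\prod_{k\ge 1}(1-q^{-k})^{-k(k+1)/2}\bigr)$. For the first factor, Euler's pentagonal number theorem expands $\prod_{k\ge 1}(1-q^{-k})$ as $1-q^{-1}-q^{-2}+q^{-5}+q^{-7}-q^{-12}-q^{-15}+\cdots$, and for $q\ge 2$ the tail past $1-q^{-1}-q^{-2}$ is positive (grouping each positive exponent-pair with the following negative one and using $q^{-5}>q^{-12}$, $q^{-7}>q^{-15}$, etc.), so $\prod_{k\ge 1}(1-q^{-k})^{-1}<(1-q^{-1}-q^{-2})^{-1}$. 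For the second factor I would take logarithms and use $-\log(1-y)=\sum_{j\ge 1}y^{j}/j$: the $j=1$ part is exactly $\sum_{k\ge 1}\tfrac{k(k+1)}{2}q^{-k}=q^{-1}(1-q^{-1})^{-3}$, while the $j\ge 2$ part is at most $\tfrac12\sum_{k\ge 1}\tfrac{k(k+1)}{2}\tfrac{q^{-2k}}{1-q^{-k}}\le\tfrac{1}{2(1-q^{-1})}\sum_{k\ge 1}\tfrac{k(k+1)}{2}q^{-2k}=\tfrac{q^{-2}}{2(1-q^{-1})(1-q^{-2})^{3}}=\tfrac{q^{-2}(1+q^{-1})}{2(1-q^{-2})^{4}}$, using $\tfrac1j\le\tfrac12$ for $j\ge 2$, the geometric sum over $j$, and $(1-q^{-k})^{-1}\le(1-q^{-1})^{-1}$. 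Exponentiating and multiplying by the bound on the first factor yields $(b)$.

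For $(c)$ and $(d)$ I would exploit $(a)$ and $(b)$ numerically. By $(a)$, $l(q)=1+2q^{-1}+7q^{-2}+19q^{-3}+R(q)$ with $R(q)=\sum_{n\ge 4}c_nq^{-n}$ and $c_n\ge 0$; hence $q^{3}R(q)=\sum_{n\ge 4}c_nq^{3-n}$ is non-increasing in $q$, so for $q\ge 3$ one has $q^{3}R(q)\le 27R(3)=27\,l(3)-85$. Plugging $q=3$ into $(b)$ gives $l(3)<\tfrac{9}{5}\,e^{9/8}\,e^{243/2048}<6.3$, whence $q^{3}R(q)<27\cdot 6.3-85<95$, so $l(q)<1+2q^{-1}+7q^{-2}+(19+95)q^{-3}=1+2q^{-1}+7q^{-2}+114q^{-3}$, which is $(c)$. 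For $(d)$, the upper bound is $(b)$ at $q=2$: $l(2)<(1-\tfrac{1}{2}-\tfrac{1}{4})^{-1}e^{4}e^{16/27}=4\,e^{124/27}<395.0005$. The lower bound follows because every factor of $l(2)$ omitted from a finite truncation exceeds $1$, so the partial product $\prod_{k=1}^{K}(1-2^{-k})^{-k(k+1)/2-1}$ is a lower bound for $l(2)$; taking $K$ large enough (optionally also bounding the remaining tail below via $-\log(1-2^{-k})>2^{-k}$ together with $\sum_{k>K}\tfrac{k(k+1)}{2}2^{-k}$) makes it exceed $278.98$.

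The step I expect to be the main obstacle is $(b)$: one has to choose the split that isolates a first factor controllable by the pentagonal number theorem, get the sign bookkeeping in that series right, and arrange the $j\ge 2$ estimate so that it collapses exactly to $\tfrac{q^{-2}(1+q^{-1})}{2(1-q^{-2})^{4}}$. Once $(b)$ is in hand, $(c)$ and $(d)$ reduce to careful arithmetic (and, in $(d)$, retaining enough factors in the truncated product to clear the stated constant).
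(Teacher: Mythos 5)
Your proof is correct, and parts $(a)$, $(b)$ and $(d)$ follow essentially the paper's route: $(a)$ is the same expansion of the first three factors; in $(b)$ the paper makes the identical split of $l(q)$ into $\prod_{k\geq 1}(1-q^{-k})^{-1}$ times $L(q)=\prod_{k\geq 1}(1-q^{-k})^{-k(k+1)/2}$ and the identical estimate of $\log L(q)$ (it merely interchanges the two sums before splitting off the leading term, and it quotes the bound $\prod_{k\geq 1}(1-q^{-k})^{-1}<(1-q^{-1}-q^{-2})^{-1}$ from Neumann and Praeger, where you rederive it via the pentagonal number theorem --- a self-contained but equivalent step); and $(d)$ is obtained exactly as you describe, the upper bound being $(b)$ at $q=2$ (note $4e^{124/27}\approx 395.00047$, so the stated $395.0005$ holds with very little room to spare --- worth checking to that precision) and the lower bound a finite partial product, which the paper truncates at $k=30$. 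The genuine divergence is $(c)$: the paper replaces each of the three factors in $(b)$ by an explicit polynomial majorant in $q^{-1}$ valid for $q\geq 3$ (for instance $\exp(q^{-1}/(1-q^{-1})^3)<1+q^{-1}+\tfrac{7}{2}q^{-2}+41q^{-3}$, using $r!\geq 2^r$) and multiplies them out, whereas you exploit the non-negativity of the coefficients of $l(q)$ as a power series in $q^{-1}$ to see that $q^{3}\bigl(l(q)-1-2q^{-1}-7q^{-2}-19q^{-3}\bigr)$ is non-increasing in $q$, reducing the whole range $q\geq 3$ to a single numerical evaluation of $(b)$ at $q=3$. Your version is shorter and less prone to bookkeeping errors; its only extra requirement is the (easy) justification that the power-series expansion of the infinite product actually converges to $l(q)$ for $q>1$, which follows since the coefficients of the partial products increase and stabilise.
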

\begin{proof}Part~$(a)$ follows by expanding in powers of $q$ the
  first three terms 
  $(1-q^{-1})^{-2}$, $(1-q^{-2})^{-4}$ and $(1-q^{-3})^{-7}$(for $k=1,2,3$) of the
  infinite product $l(q)$ and noticing that $(1-q^{-k})^{-1}>1$, for $k\geq
  1$. 

Next, we consider an 
  upper bound for $l(q)$. First, we recall that by the Binomial Theorem,
  $(1-x)^{-s}=\sum_{k= 0}^\infty{k+s-1\choose s-1}x^k$. In particular,
\begin{equation}\label{eq:auxx}
\sum_{k=1}^\infty {k+1\choose 2}x^k=
 x\sum_{k=1}^\infty{k+1\choose 2}x^{k-1}=
x\sum_{k'=0}^\infty{k'+2\choose 2}x^{k'}=\frac{x}{(1-x)^{3}}.
\end{equation}
Set $L(q)=\prod_{k=1}^\infty(1-q^{-k})^{-k(k+1)/2}$. We have

\begin{eqnarray*}
\log(L(q))&=&-\sum_{k=1}^{\infty}\frac{k(k+1)}{2}\log(1-q^{-k})
=\sum_{k=1}^\infty\frac{k(k+1)}{2}\left(\sum_{m=1}^\infty\frac{q^{-km}}{m}\right)\\
&=&\sum_{m=1}^\infty\frac{1}{m}\left(\sum_{k=1}^\infty{k+1\choose 2}q^{-km}\right)
=\sum_{m=1}^\infty\frac{q^{-m}}{m(1-q^{-m})^3}\qquad(\dag) \\
&<&\frac{q^{-1}}{(1-q^{-1})^3}+\frac{1}{2}\sum_{m=2}^{\infty}\frac{q^{-m}}{(1-q^{-m})^3}\\
&<&\frac{q^{-1}}{(1-q^{-1})^3}+\frac{1}{2(1-q^{-2})^3}\sum_{m=2}^{\infty}q^{-m}\\
&=&\frac{q^{-1}}{(1-q^{-1})^3}+\frac{q^{-2}}{2(1-q^{-2})^3(1-q^{-1})},\\
\end{eqnarray*}
where in $(\dag)$ we used Equation~\ref{eq:auxx}.
From~\cite[Lemma~$3.5$]{NP}, we have
$\prod_{k=1}^\infty(1-q^{-k})^{-1}<(1-q^{-1}-q^{-2})^{-1}$. Therefore,
Part~$(b)$ follows.

Now, assume $q>2$ and set $T=\exp(q^{-1}/(1-q^{-1})^3)$. So, 

\begin{eqnarray*}
T&=&\sum_{r=0}^\infty\frac{(q^{-1}/(1-q^{-1})^3)^r}{r!}<
\sum_{r=0}^3\frac{(q^{-1}/(1-q^{-1})^3)^r}{r!}+
\sum_{r=4}^\infty \left(\frac{q^{-1}/(1-q^{-1})^3}{2}\right)^r\\
&=&\sum_{r=0}^3\frac{(q^{-1}/(1-q^{-1})^3)^r}{r!}+
\frac{\left(\frac{q^{-1}/(1-q^{-1})^3}{2}\right)^4}{1-\frac{q^{-1}/(1-q^{-1})^3}{2}}<1+q^{-1}+\frac{7}{2}q^{-2}+41q^{-3},  
\end{eqnarray*}
where the first inequality uses $r!\geq 2^{r}$ (for $r\geq 4$) and
the last inequality is obtained by 
expanding in powers of $q$ and using the fact that
$q>2$. 

With similar
computations we get $\exp(q^{-2}(1+q^{-1})/2(1-q^{-2})^4)<1+q^{-2}/2+2q^{-3}$ and 
$(1-q^{-1}-q^{-2})^{-1}<1+q^{-1}+2q^{-2}+7q^{-3}$. Now, from
Part~$(b)$ we have 
\begin{eqnarray*}
l(q)&<&\left(1+q^{-1}+2q^{-2}+7q^{-3}\right)
\left(1+q^{-1}+\frac{7}{2}q^{-2}+41q^{-3}\right)
\left(1+\frac{1}{2}q^{-2}+2q^{-3}\right)\\
&<&1+2q^{-1}+7q^{-2}+114q^{-3}
\end{eqnarray*} and
Part~$(c)$ follows. 

The lower bound in Part~$(d)$ is obtained by
computing $\prod_{k=1}^{30}(1-q^{-k})^{-k(k+1)/2-1}$ with
  $q=2$ and the upper bound is obtained by substituting $q=2$ in Part~$(b)$.
\end{proof}

In the following two propositions, we study some analytic properties of
$\overline{F}(t),F_1(t),F_2(t)$. 
\begin{proposition}\label{prop:analyticF1}
$F_1(t)$ is analytic on a disk of radius $q$.
Also, $F_1(t)$ has a simple pole at $t=q$  and $(1-q^{-1}t)F_1(t)$ is
analytic on a disk of radius $q^2$.
\end{proposition}

\begin{proof}
From Definition~\ref{def:F1F2}, we obtain
\begin{equation}\label{eq:2}
F_1(t)=
\prod_{d=1}^{\infty}
\exp\left(
\frac{(q^{-1}t)^d}{d(1-q^{-d})}
\right)=\exp\left(
\sum_{d=1}^\infty\frac{(q^{-1}t)^d}{d(1-q^{-d})}
\right).\\
\end{equation}
Next, we determine where the series in Equation~\ref{eq:2} is
absolutely convergent. We get

\begin{equation}\label{eq:3}
\sum_{d=1}^\infty\frac{|q^{-1}t|^d}{d(1-q^{-d})}\leq
\sum_{d=1}^\infty \frac{|q^{-1}t|^d}{1-q^{-d}}\leq \frac{1}{1-q^{-1}}\sum_{d=1}^\infty|q^{-1}t|=\frac{|q^{-1}t|}{(1-q^{-1})(1-|q^{-1}t|)}. 
\end{equation} 
Since $1/(1-q^{-1}t)$ is analytic on a disk of radius $q$ and has a
simple pole in $t=q$, the equivalent result for $F_1(t)$ follows at
once.

It remains to show
that $(1-q^{-1}t)F_1(t)$ is analytic on  a disk of radius $q^2$. Since
$\varphi_r(q^{-1})=\varphi_{r-1}(q^{-1})(1-q^{-r})$ (for $r\geq 1$), from
Lemma~\ref{lemma:F1c1}~$(ii)$ we get  
\begin{eqnarray*}
(1-q^{-1}t)F_1(t)&=&(1-q^{-1}t)\sum_{r=0}^\infty\frac{t^r}{q^r\varphi_r(q^{-1})}=
1+\sum_{r=1}^\infty\left(\frac{t^r}{q^r\varphi_r(q^{-1})}-\frac{t^r}{q^r\varphi_{r-1}(q^{-1})}\right)\\
&=&1+\sum_{r=1}^\infty\frac{q^{-r}t^r}{q^r\varphi_r(q^{-1})}=\sum_{r=0}^\infty\frac{t^r}{q^{2r}\varphi_r(q^{-1})}.  
\end{eqnarray*}
As, $$\sum_{r=0}^\infty\frac{|t|^r}{q^{2r}\varphi_r(q^{-1})}\leq
\prod_{i=1}^\infty(1-q^{-i})^{-1}\sum_{r=0}^\infty
|q^{-2}t|^r=\prod_{i=1}^\infty(1-q^{-i})^{-1}\frac{1}{1-|q^{-2}t|},$$
we get that $(1-q^{-1}t)F_1(t)$ is analytic on a disk of radius $q^2$.
\end{proof}

\begin{proposition}\label{prop:analyticF2}
$F_2(t)$ is analytic on a disk of radius $q^{3/2}$.
\end{proposition}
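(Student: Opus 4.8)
The plan is to use the exponential representation of $F_2$ coming from Definition~\ref{def:F1F2}, namely
$$F_2(t)=\exp(S(t)),\qquad S(t):=\sum_{m\geq 2}\sum_{d\geq 1}\frac{t^{dm}}{d(1-q^{-d})^2q^{d(2m-1)}},$$
and to show that the power series $S(t)$ has radius of convergence at least $q^{3/2}$. Since the exponential function is entire, analyticity of $F_2(t)=\exp(S(t))$ on the disk of radius $q^{3/2}$ is then immediate. (Equivalently, one could take logarithms of the product formula in Lemma~\ref{lemma:F2c2}(ii) and recover exactly the same series $S(t)$, so it makes no difference which expression for $F_2$ one starts from.)

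For the main estimate I would fix $R$ with $0<R<q^{3/2}$, put $r=R/q^2$ so that $r<q^{-1/2}<1$, and bound the generic term of $S$ on the closed disk $|t|\leq R$. Using $1-q^{-d}\geq 1-q^{-1}$ and $1/d\leq 1$ together with the identity $R^{dm}/q^{d(2m-1)}=(qr^m)^d$, each $(m,d)$-term has modulus at most $(1-q^{-1})^{-2}(qr^m)^d$. For every $m\geq 2$ one has $qr^m\leq qr^2<1$, and this last inequality, $qr^2<1$, is precisely the condition $R<q^{3/2}$; this is where the claimed radius enters. Summing the geometric series in $d$ gives $\sum_{d\geq 1}(qr^m)^d=qr^m/(1-qr^m)\leq qr^m/(1-qr^2)$, and then summing over $m$ gives
$$\sum_{m\geq 2}\sum_{d\geq 1}\left|\frac{t^{dm}}{d(1-q^{-d})^2q^{d(2m-1)}}\right|\leq\frac{1}{(1-q^{-1})^2}\cdot\frac{qr^2}{(1-qr^2)(1-r)}<\infty.$$
Hence the coefficients of $S$ satisfy $\sum_n|[t^n]S(t)|R^n<\infty$ for every $R<q^{3/2}$, so $S$ converges and is analytic on $|t|<q^{3/2}$; therefore so is $F_2(t)=\exp(S(t))$.

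The estimate above is routine, so there is no serious obstacle; the one point worth a remark is why $q^{3/2}$, rather than something larger, is the correct radius. The critical index is $m=2$: for each fixed $m\geq 3$ the same computation yields convergence on the strictly larger disk $|t|<q^{2-1/m}$, so the radius of convergence of $S(t)$ is governed entirely by its $m=2$ part $\sum_{d\geq 1}t^{2d}/\bigl(d(1-q^{-d})^2q^{3d}\bigr)$, whose coefficients are comparable to $d^{-1}q^{-3d}$ and hence whose radius of convergence is exactly $q^{3/2}$. Equivalently, in the product formula of Lemma~\ref{lemma:F2c2}(ii) the only factor whose denominator can vanish for $|t|<q^{3/2}$ is the $m=2$, $i=j=0$ factor $(1-q^{-3}t^2)^{-1}$, which is non-vanishing precisely when $|t|<q^{3/2}$; so $q^{3/2}$ is best possible, and $F_2$ genuinely fails to extend analytically across $t=\pm q^{3/2}$.
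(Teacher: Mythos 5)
Your proof is correct and follows essentially the same route as the paper: both pass to $\log F_2(t)=\sum_{m\geq 2}\sum_{d\geq 1}t^{dm}/\bigl(d(1-q^{-d})^2q^{2dm-d}\bigr)$ and establish absolute convergence for $|t|<q^{3/2}$ by uniform bounds on $1/d(1-q^{-d})^2$ followed by geometric series estimates, the only (cosmetic) difference being that the paper first sums the geometric series over $m$ in closed form and then estimates the remaining sum over $d$, while you bound the double series termwise. Your closing observation that the $m=2$, $i=j=0$ factor $(1-q^{-3}t^2)^{-1}$ makes $q^{3/2}$ optimal is a correct aside, though not needed for the stated proposition.
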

\begin{proof}
We argue as in Proposition~\ref{prop:analyticF1}. We have
\begin{eqnarray*}
F_2(t)&=&
\prod_{m=2}^\infty
\prod_{d=1}^\infty
\exp\left(\frac{t^{dm}}{d(1-q^{-d})^2q^{2dm-d}}\right)=\exp\left(\sum_{m=2}^\infty\sum_{d=1}^\infty\frac{t^{dm}}{d(1-q^{-d})^2q^{2dm-d}}\right)\\
&=&\exp\left(
\sum_{d=1}^\infty
\frac{t^{2d}q^d}{d(1-q^{-d})^2q^{4d}}
\sum_{m=0}^\infty\frac{t^{dm}}{q^{2dm}}
\right)
=
\exp\left(\sum_{d=1}^\infty\frac{(q^{-2}t)^{2d}q^d}{d(1-q^{-d})^2(1-(q^{-2}t)^{d})}\right).
\end{eqnarray*}

Set $z=q^{-2}t$. We obtain

\begin{equation}\label{eq:5}
\log(F_2(t))=\sum_{d=1}^\infty \frac{z^{2d}q^d}{d(1-q^{-d})^2(1-z^{d})}.
\end{equation}
We prove that for $|z|<q^{-1/2}$ (i.e. $|t|<q^{3/2}$) the series in
Equation~\ref{eq:5} is absolutely convergent. Note that 

$$\left|\frac{1}{1-z^d}\right|\leq \frac{1}{1-q^{-d/2}}\leq
\frac{1}{1-q^{-1/2}}\qquad \textrm{and}\qquad
\frac{1}{d(1-q^{-d})^2}\leq \frac{1}{(1-q^{-1})^2}. $$

Thus the series in Equation~\ref{eq:5} is absolutely convergent if
$$\sum_{d=1}^\infty |z^2q|^d=\frac{1}{1-|z^2q|}$$ is
absolutely convergent. As $|z|<q^{-1/2}$, we get $|z^2q|<1$, so
$F_2(t)$ is analytic for $|t|<q^{3/2}$.  
\end{proof}

We finally determine the asymptotic behaviour of $\{b_n\}_{n\geq 1}$
(and so also for $\{a_n\}_{n\geq 1}$ when $q>2$).
\begin{theorem}\label{thm:limit}We have $$l(q)=\lim_{n\to\infty}q^nb_n=
\prod_{k=1}^\infty(1-q^{-k})^{-k(k+1)/2-1}$$
and also $|q^nb_n-l(q)|=o(r^{-n/2})$, for every $0<r<q$.
\end{theorem}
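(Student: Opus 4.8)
The plan is to extract the asymptotics of $\{b_n\}$ from the analytic structure of $\overline{F}(t)$ at its dominant singularity $t=q$. By Lemma~\ref{lemma:splitF}, $\overline{F}(t)=F_1(t)F_2(t)$; by Proposition~\ref{prop:analyticF1} the product $(1-q^{-1}t)F_1(t)$ is analytic on the disk $|t|<q^2$, and by Proposition~\ref{prop:analyticF2} the function $F_2(t)$ is analytic on the disk $|t|<q^{3/2}$. Hence $h(t):=(1-q^{-1}t)\overline{F}(t)=\bigl[(1-q^{-1}t)F_1(t)\bigr]F_2(t)$ is analytic on $|t|<q^{3/2}$, so $A:=h(q)$ is a well-defined finite number. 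Since $h(t)-h(q)$ vanishes at $t=q$, the quotient
$$G(t):=\overline{F}(t)-\frac{A}{1-q^{-1}t}=\frac{h(t)-h(q)}{1-q^{-1}t}$$
has a removable singularity there and extends to an analytic function on the whole disk $|t|<q^{3/2}$.

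Writing $\overline{F}(t)=\sum_n b_n t^n$ and $G(t)=\sum_n g_n t^n$ and comparing coefficients gives $b_n=A q^{-n}+g_n$, i.e. $q^n b_n=A+q^n g_n$. Fix $0<r<q$ and pick $\rho$ with $q\sqrt{r}<\rho<q^{3/2}$, which is possible precisely because $r<q$. Since $G$ is analytic on $|t|<q^{3/2}$, Cauchy's estimate on the circle $|t|=\rho$ gives a constant $C_\rho$ with $|g_n|\leq C_\rho\rho^{-n}$ for all $n$; hence $q^n|g_n|\leq C_\rho(q/\rho)^n$, and $q/\rho<1/\sqrt{r}=r^{-1/2}$ forces $q^n|g_n|=o(r^{-n/2})$. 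Thus $q^n b_n\to A$ with $|q^n b_n-A|=o(r^{-n/2})$ for every $0<r<q$, and it only remains to identify $A$ with $l(q)$.

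To evaluate $A=h(q)=\bigl(\lim_{t\to q}(1-q^{-1}t)F_1(t)\bigr)F_2(q)$ I would use the product formulas of Lemmas~\ref{lemma:F1c1} and~\ref{lemma:F2c2}. From $F_1(t)=\prod_{i\geq 0}(1-q^{-(i+1)}t)^{-1}$ one has $(1-q^{-1}t)F_1(t)=\prod_{i\geq 1}(1-q^{-(i+1)}t)^{-1}$, which at $t=q$ equals $\prod_{k\geq 1}(1-q^{-k})^{-1}$. From $F_2(t)=\prod_{m\geq 2}\prod_{i,j\geq 0}(1-q^{-(i+j+2m-1)}t^m)^{-1}$ one gets
$$F_2(q)=\prod_{m\geq 2}\prod_{i,j\geq 0}\bigl(1-q^{-(i+j+m-1)}\bigr)^{-1}=\prod_{k\geq 1}(1-q^{-k})^{-e_k},$$
where $e_k$ is the number of triples $(m,i,j)$ with $m\geq 2$, $i,j\geq 0$ and $i+j+m-1=k$; since for each $m$ with $2\leq m\leq k+1$ there are $k-m+2$ admissible pairs $(i,j)$, we get $e_k=\sum_{m=2}^{k+1}(k-m+2)=\sum_{j=1}^{k}j=k(k+1)/2$. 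Multiplying the two products gives $A=\prod_{k\geq 1}(1-q^{-k})^{-1-k(k+1)/2}=l(q)$, completing the argument.

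None of the individual steps is deep: the decomposition $\overline{F}=A(1-q^{-1}t)^{-1}+G$ is the standard partial-fraction extraction of a simple pole, and the identification of $A$ is a finite computation with the two product formulas. The one place that genuinely matters is the input that $F_2$ is analytic on $|t|<q^{3/2}$, from Proposition~\ref{prop:analyticF2}: it is precisely this radius $q^{3/2}$ that makes $G$ analytic out to $|t|<q^{3/2}$ and hence forces the error $q^n g_n$ to decay faster than $(q\cdot q^{-3/2})^n=q^{-n/2}$, indeed faster than $r^{-n/2}$ for any $r<q$. With a smaller radius of analyticity for $F_2$ the exponent $n/2$ in the error term would have to be weakened, so that proposition is doing the real work here.
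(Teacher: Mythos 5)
Your argument is correct and follows essentially the same route as the paper: both use Propositions~\ref{prop:analyticF1} and~\ref{prop:analyticF2} to see that $(1-q^{-1}t)\overline{F}(t)$ is analytic on the disk $|t|<q^{3/2}$ with a simple pole of $\overline{F}$ at $t=q$, and both evaluate the limit via the product formulas of Lemmas~\ref{lemma:F1c1} and~\ref{lemma:F2c2}, including the same count of $k(k+1)/2$ triples $(m,i,j)$ with $i+j+m-1=k$. The only difference is that where the paper cites the transfer lemma~\cite[Lemma~$1.3.3$]{FNP} to pass from the analytic data to the asymptotics of $b_n$, you prove that step directly by the partial-fraction extraction $\overline{F}(t)=A(1-q^{-1}t)^{-1}+G(t)$ and a Cauchy estimate on $G$, which is a correct and self-contained substitute.
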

\begin{proof}
From Propositions~\ref{prop:analyticF1},~\ref{prop:analyticF2}, we get
that  $\overline{F}(t)$ is an analytic function on a disk of radius $q$, the
point $t=q$ is 
a simple pole for $\overline{F}(t)$ and
$f(t)=(1-q^{-1}t)\overline{F}(t)$ is an analytic 
function on disk of
radius $q^{3/2}$.  In particular, by~\cite[Lemma~$1.3.3$]{FNP}, we
get that $\lim_{n\to\infty}q^nb_n=f(q)$ and
$|b_n-f(q)/q^n|=o(r^{-3n/2})$, for every $0<r<q$. In particular, it
remains to compute 
$f(q)$.

From Lemma~\ref{lemma:F2c2}~$(ii)$, we get
$F_2(t)=\prod_{m\geq 2,i,j\geq 0}(1-q^{-(i+j+2m-1)}t^m)^{-1}$. In particular,
$F_2(q)=\prod_{m\geq 2,i,j\geq
  0}(1-q^{-(i+j+m-1)})^{-1}$. Now, given $k\geq 1$, there exist
$k(k+1)/2$ choices of $(i,j,m)$ such that $k=i+j+m-1$, with $m\geq
2$ and $i,j\geq 0$. Therefore,
$F_2(q)=\prod_{k=1}^\infty(1-q^{-k})^{-k(k+1)/2}$.

From Lemma~\ref{lemma:F1c1}, we get
$(1-q^{-1}t)F_1(t)=\prod_{i=1}^\infty(1-q^{-(i+1)}t)^{-1}$. So,
$f(q)=\prod_{i=1}^\infty(1-q^{-i})^{-1}\prod_{k=1}^\infty(1-q^{-k})^{-k(k+1)/2}$
and the theorem follows. 
\end{proof}

Summing up, Theorem~\ref{thm:limiting} follows from
Theorems~\ref{thm:monotonic} and~\ref{thm:limit}. Therefore, all the
results mentioned in the introduction are now proved.

\begin{remark}\label{rm:1}{\rm Since $|\GL_n
  (q)|=q^{n^2}\varphi_n(q^{-1})$,
  Theorem~\ref{thm:limit} yields that 
  $b_n|\GL_n(q)|$ is a polynomial $p(q)$ in $q$ of
  degree $q^{n^2-n}$. So, write
  $p(q)=\sum_{i=0}^{n^2-n}\alpha_iq^{n^2-n-i}$ and pick $r$ such that
  $0<r<q$. 

As $|q^nb_n-l(q)|=o(r^{-n/2})$, we obtain that 
$|p(q)-q^{n^{2}-n}\varphi_n(q^{-1})l(q)|=o(r^{n^2-n-n/2})=o(r^{n^2-3n/2})$. 
Furthermore, since 
$\varphi_n(q^{-1})l(q)=\prod_{k=n+1}^\infty(1-q^{-k})^{-1}
\prod_{k=1}^\infty(1-q^{-k})^{-k(k+1)/2}$,
we see
that $$\left|\sum_{i=0}^{n^2-n}\alpha_iq^{n^2-n-i}
-q^{n^2-n}\prod_{k=1}^{\infty}(1-q^{-k})^{-k(k+1)/2}\right|=o(r^{n^2-3n/2}).$$   
It follows that the first $\lfloor n/2\rfloor$ coefficients
$\alpha_0,\ldots,\alpha_{\lfloor n/2\rfloor-1}$ are obtained 
from the series expansion of
$\prod_{k=1}^{\infty}(1-q^{-k})^{-k(k+1)/2}$. Such a sequence is
described and  studied in~\cite{sloane}.}
\end{remark}

\begin{remark}\label{rm:2}{\rm 
It was conjectured in~\cite{AP} that, for
$q>n$, $$\omega(\GL_n(q))\geq
q^{n^2-n}+\frac{|\GL_n(q)|}{q(q-1)^n}+\frac{|\GL_n(q)|}{q^{(n^2-n)/2}(q-1)^2}.$$ 
The second summand on the right hand side of this inequality is
asymptotic to
$q^{n^2-n-1}+(n-1)q^{n^2-n-2}+O(q^{n^2-n-3})$. Therefore,
Remark~\ref{rm:1} and 
Table~\ref{table:1} yields that this conjecture is 
incorrect for $n\geq 6$.}
\end{remark}

\section{Concluding comments}
We conclude by noticing that we can exploit the  theory presented in
this paper  further in order to obtain a refinement of
Corollary~\ref{cor:exact} in the case that $q\leq 
n$. We give an example in the case  $n=q$ and $q>2$.

From the proof of Theorem~\ref{thm:exact} we have that  if $n=q$, then
all $A_\mu$ (for $\mu\in \Phi_n$) are centralisers of 
cyclic matrices except if $\sum_{m}\mu(1,m)>q-1$. Since
$q=\sum_{d,m}\mu(d,m)dm$, this implies $\sum_{m}\mu(1,m)=q$ and so
$\mu=\mu_0$ where

\[
\mu_0(d,m)=\left\{
\begin{array}{ccl}
0&&\textrm{if }d\geq 2 \textrm{ or }m\geq 2,\\
q&&\textrm{if }d=1 \textrm{ and }m=1.\\
\end{array}
\right.
\]

By Definition~\ref{def:Amu}, $A_{\mu_0}$ is the group of
diagonal matrices, that is, the split torus
of size $(q-1)^q$. By Proposition~\ref{prop:numberA_n} we have
$|N_{\GL_q(q)}(A_{\mu_0})|=(q-1)^qq!$. 

Since there are only $q-1$ distinct
eigenvalues 
available,
any $g\in \GL_q(q)$ giving a decomposition of $V_g$ as
$\oplus_{i=1}^qV_i$ into $1$-dimensional spaces has an eigenvalue with
multiplicity $\geq 2$. Therefore, we see that $g$ is centralised by some cyclic
matrix $g_\mu \in A_\mu$, for $\mu\neq \mu_0$. In particular,
following the proof of Corollary~\ref{cor:exact} we have that 

$$\omega(\GL_q(q))=|\mathcal{A}_q|-\frac{|\GL_q(q)|}{(q-1)^qq!}.$$

It is not clear to the authors of this paper whether there is a general
theory for small $q$ and large $n$ with a tractable formula for
$\omega(\GL_n(q))$. 

\bibliographystyle{amsplain}

\begin{thebibliography}{10}

\bibitem{AAM}
{\sc A. Abdollahi, A. Akbari, H. R. Maimani}, Non-commuting graph of a
group, {\it J. Algebra} {\bf 298} (2006), 468--492.

\bibitem{AP}
{\sc A. Azad, C. E. Praeger}, Maximal sets of pairwise
noncommuting elements of finite three-dimensional general linear
groups, {\it Bull. Austral. Math. Soc.} {\bf 80} (2009), 91--104.

\bibitem{Isaacs}
{\sc E. A. Bertram}, Some applications of graph theory to
finite groups, {\it Discrete Math.} {\bf 44} (1983), 31--43.

\bibitem{Brown}
{\sc R. Brown}, Minimal covers of $S_n$ by abelian subgroups and
maximal subsets of pairwise noncommuting elements, {\it
  J. Combin. Theory Ser. A} {\bf 49} No. 2 (1988), 294-307. 

\bibitem{Carter}
{\sc R. W. Carter}, {\it Finite groups of {L}ie type, Conjugacy
  Classes and Complex Characters}, Wiley,
Chichester, 1993.

\bibitem{Chin}
{\sc A. M. Y. Chin}, On non-commuting sets in an extra
special $p$-group, {\it J. Group Theory} {\bf 8} (2005), 189--194. 

\bibitem{CuRe}{\sc C. W. Curtis, I. Reiner}, Representation theory of
  finite groups and associative algebras, Wiley, Pure and Applied
  Mathematics Volume XI, (1962). 

\bibitem{Dixon}{\sc J. D. Dixon}, Maximal abelian subgroups of the symmetric
  groups, {\it Can. J. Math. } {\bf 23} (1971), 426--438.

\bibitem{Fu}{\sc J. Fulman}, Cycle indices for the finite classical
  groups, {\it J. Group Theory} {\bf 2} (1999), 251--289.

\bibitem{FNP}{\sc J. Fulman, P. N. Neuman, C. E. Praeger}, A Generating
  Function Approach to the Enumeration of Matrices in Classical Groups
  over Finite Fields, {\it Mem.  Amer. Math.
    Soc.} {\bf 176}, (2005). 

\bibitem{Humphreys}{\sc J. E. Humphreys}, Conjugacy classes in semisimple
  algebraic groups, {\it Mathematical Surveys and Monographs} {\bf 43}, 1995. 

\bibitem{macdonald}{\sc I. G. Macdonald}, Symmetric functions and Hall
  polynomials, Claredon Press, Oxford, (1979).

\bibitem{Mckenzie}
{\sc V. Faber, R. Laver and R. Mckenzie}, Covering of groups by
abelian subgroups, {\it Can. J. Math.} {\bf 30}, (1978), 933--945.

\bibitem{Neumann}
{\sc B. H. Neumann}, A problem of Paul Erd\H{o}s on groups,
{\it J. Aust. Math. Soc. Ser. A} {\bf 21} (1976), 467--472.

\bibitem{NP}
{\sc  P. M. Neumann , C. E. Praeger}, Cyclic matrices over
finite fields, {\it J. London Math. Soc.} {\bf 52} (1995), 263--284.

\bibitem{NPMeataxe}{\sc P. M. Neumann, C. E. Praeger},
Cyclic matrices and the MEATAXE,
{\it Ohio State Univ. Math. Res. Inst. Publ.} {\bf 8}, de Gruyter, Berlin, 2001, 291--300. 

\bibitem{Pyber}
{\sc L. Pyber}, The number of pairwise non-commuting
elements and the index of the centre in a finite group,  {\it J. London
Math. Soc.} (2) {\bf 35} (1987), 287--295.

\bibitem{sloane}{\sc N. J. A. Sloane}, Ed. (2008), The On-Line Encyclopedia
  of Integer Sequences, 
published electronically at
\texttt{www.research.att.com/~njas/sequences/Sequence A000294}.

\bibitem{T}
{\sc H. J. Tomkinson}, Groups covered by finitely many cosets or subgroups,
{\it Comm. in Algebra} {\bf 15} (1987), 845--859.

\bibitem{Vdovin}{\sc E. P. Vdovin}, The number of subgroups with trivial
  unipotent radicals in finite groups of Lie type, {\it J. Group
    Theory} {\bf 7} (2004), 99--112.

\bibitem{Wa}{\sc G. E. Wall}, Counting cyclic and separable matrices
  over a finite field, {\it Bulletin Australian Math. Soc.} {\bf 60}
  (1999), 253--284.
\end{thebibliography}

\end{document}